
\documentclass{amsproc}
\usepackage[english,russian]{babel}

\usepackage{a4wide}
\usepackage{amsmath}
\allowdisplaybreaks
\usepackage{enumerate}

\usepackage[dvipsnames]{xcolor}

\parindent0mm
\parskip1.4mm

\usepackage{amsmath,amsthm}
\usepackage{amsfonts}
\usepackage{amssymb}
\usepackage{longtable}
\usepackage[matrix,arrow,curve]{xy}

\usepackage{amsmath}
\usepackage{amsfonts}
\usepackage{amssymb}

\newtheorem{theorem}{Theorem}
\newtheorem{cor}{Corollary}
\newtheorem{prop}{Proposition}

\newtheorem{lem}{Lemma}

{\bf}{\rm}

{\bf}{\rm}{\rm}

\newtheorem*{theorem*}{Theorem}
\newtheorem*{cor*}{Corollary}

\def\Real{\mathbb{R}}
\def\SO{\text{\rm SO}}

\def\PSL{\mathrm{PSL}_2(\mathbb{R})}
\def\PGL{\mathrm{PGL}_2(\mathbb{R})}
\def\sl{\mathfrak{sl}_2(\mathbb{R})}
\def\SAff{\mathrm{Aff}^0_1(\mathbb{R})}
\def\Aff{\mathrm{Aff}_1(\mathbb{R})}
\def\aff{\mathfrak{aff}_1(\mathbb{R})}

\newcommand{\Ll}{\mathcal{L}}

\newcommand{\fraka}{\mathfrak{a}}
\newcommand{\frakb}{\mathfrak{b}}

\newcommand{\frakg}{\mathfrak{g}}

\newcommand{\mathR}{\mathbb{R}}
\newcommand{\frakso}{\mathfrak{so}}
\newcommand{\frakco}{\mathfrak{co}}

\newcommand{\g}{\mathfrak{g}}
\newcommand{\h}{\mathfrak{h}}
\newcommand{\so}{\mathfrak{so}}
\newcommand{\co}{\mathfrak{co}}

\newcommand{\zr}{\ltimes}

\def\id{\mathop\text{\rm id}\nolimits}

\def\Hom{\mathop\text{\rm Hom}\nolimits}

\usepackage{stackrel}

\newcommand{\be}{\begin{equation}}
\newcommand{\ee}{\end{equation}}

\let\leq=\leqslant
\let\geq=\geqslant


\begin{document}
\selectlanguage{english}	

\title{Recurrent Lorentzian Weyl spaces}

\author{Andrei Dikarev}\thanks{$^1$Department of Mathematics and Statistics, Masaryk University, Faculty of Science, Kotl\'a\v{r}sk\'a 2, 611 37 Brno, Czech Republic}

\author{Anton S. Galaev}\thanks{$^2$University of Hradec Kr\'alov\'e, Department of Mathematics, Rokitansk\'eho 62, 500~03 Hradec Kr\'alov\'e,  Czech
Republic\\
E-mail: anton.galaev(at)uhk.cz}

\author{Eivind Schneider}\thanks{$^3$Department of Mathematics and Statistics, UiT The Arctic University of Norway, 9037 Troms\o, Norway.}

\begin{abstract}
We find the local form of all non-closed Lorentzian Weyl manifolds $(M,c,\nabla)$ with recurrent curvature tensor. The recurrent curvature tensor turns out to be  weighted parallel, i.e., the obtained spaces provide certain generalization of locally symmetric affine spaces for the Weyl geometry.  If the dimension of the manifold is greater than 3, then the conformal structure is flat, and the recurrent Weyl structure is locally determined by a single function of one variable. Two local structures are equivalent if and only if the corresponding functions are related by a transformation from $\SAff \times \PSL \times \mathbb Z_2$. We find generators for the field of rational scalar differential invariants of this Lie group action. The global structure of the manifold $M$ may be described in terms of a foliation with a transversal projective structure. It is shown that all locally homogeneous structures are locally equivalent, and there is only one simply connected homogeneous non-closed recurrent Lorentzian Weyl manifold. Moreover, there are 5 classes of cohomogeneity-one spaces, and all other spaces are of cohomogeneity-two. If $\dim M=3$, the non-closed recurrent Lorentzian Weyl structures are locally determined by one function of two variables or two functions of one variables, depending on whether its holonomy algebra is 1- or 2-dimensional. In this case, two structures with the same holonomy algebra are locally equivalent if and only if they are related, respectively, by a transformation from an infinite-dimensional Lie pseudogroup or a 4-dimensional subgroup of $\mathrm{Aff}(\mathbb R^3)$. Again we provide generators for the field of rational differential invariants. We find a local expression for the locally homogeneous non-closed recurrent Lorentzian Weyl manifolds of dimension 3, and also of those of cohomogeneity one and two. In the end we give a local description of the non-closed recurrent Lorentzian Weyl manifolds that are also Einstein-Weyl. All of them are 3-dimensional and have a 2-dimensional holonomy algebra. 
 
\vskip0.5cm

{\bf Keywords}: Weyl connection; recurrent curvature; holonomy group; Riccati equation; Einstein-Weyl structure; conformally flat manifold; homogeneous space

\vskip0.5cm



\end{abstract}

\maketitle
\tableofcontents

\section{Introduction} 
A Lorentzian Weyl manifold is a triple $(M,c,\nabla)$ where $M$ is a smooth manifold, $c$ is a conformal class of Lorentzian metrics on $M$, and $\nabla$ is a torsion-free affine connection on $M$ such that, for each $g\in c$, there exists a 1-form $\omega_g$ satisfying 
\begin{equation}\label{nablag}\nabla g=-2\omega_g\otimes g.\end{equation}

In what follows all manifolds will be assumed to be connected. 
Lorentzian Weyl manifolds have received a lot of attention in several recent works,  e.g., \cite{BFKN,DP,DMT,G-RBook,MOP}.
The connection $\nabla$ is uniquely determined by a metric $g\in c$ and the corresponding 1-form $\omega_g$ by the formula \begin{equation}\label{formulaK}\nabla=\nabla^g+K^g,\quad 	g(K^g_X(Y),Z)=g(Y,Z)\omega_g(X)+g(X,Z)\omega_g(Y)-g(X,Y)\omega_g(Z),\end{equation} where $\nabla^g$ is the Levi-Civita connection of the metric $g$, and $X,Y,Z$ are vector fields on $M$.
 If $h=e^{2f}g$ is a different metric in the conformal class, then the corresponding 1-form $\omega_h$ is given by
 $$\omega_h=\omega_g-df.$$ 
 The Weyl structure $(c,\nabla)$ is called closed if $d\omega_g=0$ for some (and thus every) $g\in c$. If the structure is closed, then $\nabla$ is locally the Levi-Civita connection for some metric in the conformal class.
  Let $R$ denote the curvature tensor of the affine connection $\nabla$.
Recall that an affine connection $\nabla$ is called locally symmetric if $\nabla R=0$. It turns out that

\begin{prop}
	If the connection $\nabla$ of a Lorentzian Weyl structure is locally symmetric, then the Weyl structure is closed.  
\end{prop}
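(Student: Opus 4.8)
The plan is to prove that the Weyl structure is closed, i.e., that the $2$-form
$F:=d\omega_g$ — which does not depend on the choice of $g\in c$, since $\omega_h=\omega_g-df$ whenever $h=e^{2f}g$ — vanishes identically. The starting point is that $F$ is encoded in the curvature of $\nabla$: a further differentiation of \eqref{nablag}, followed by antisymmetrisation in the two derivative slots, shows that, as an endomorphism of $T_pM$, $R_{X,Y}$ decomposes as $R_{X,Y}=F(X,Y)\,\id+A_{X,Y}$ with $A_{X,Y}\in\so(T_pM,g)$; in particular $F=\tfrac1n\operatorname{tr}R_{\cdot,\cdot}$ is obtained from $R$ by a contraction (and, by the first Bianchi identity for the torsion-free connection $\nabla$, is a nonzero multiple of the skew-symmetric part of $\Ric^\nabla$). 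Hence $\nabla R=0$ forces $\nabla F=0$. Since a parallel tensor on a connected manifold is either identically zero or nowhere zero, we may assume, for contradiction, that $F$ is nowhere zero.

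I would first dispose of the ``generic'' situation. Fix $g\in c$; because $\nabla g^{-1}=2\omega_g\otimes g^{-1}$, any scalar $\phi$ obtained from $R^\nabla$ or $\Ric^\nabla$ by a complete contraction with $g$ satisfies $d\phi=k\,\phi\,\omega_g$ for a nonzero constant $k$ (indeed such a $\phi$ has nonzero conformal weight, since $R^\nabla$ itself is unchanged under $g\mapsto e^{2f}g$), as follows from $\nabla R=0$ together with $\nabla g=-2\omega_g\otimes g$ and $\nabla g^{-1}=2\omega_g\otimes g^{-1}$. Consequently, on the open set where $\phi\neq 0$ one has $\omega_g=k^{-1}d\log|\phi|$, so $F=d\omega_g$ vanishes there; since $F$ is nowhere zero, this set is empty. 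Thus every such scalar vanishes identically: $\operatorname{scal}^\nabla_g\equiv0$, $\langle\Ric^\nabla,\Ric^\nabla\rangle_g\equiv0$, $\langle R^\nabla,R^\nabla\rangle_g\equiv0$, and so on; in particular all traces of the powers of the endomorphism $g^{-1}\Ric^\nabla$ vanish, so that endomorphism is nilpotent.

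What remains is the totally degenerate case, which is the main obstacle. Here one uses $\nabla F=0$ directly: the Ricci identity gives $R_{X,Y}\cdot F=0$, and substituting $R_{X,Y}=F(X,Y)\,\id+A_{X,Y}$ together with $\id\cdot F=-2F$ yields $A_{X,Y}\cdot F=2F(X,Y)F$. Pairing with $F$ and using that $\so(T_pM,g)$ acts skew-adjointly on $\Lambda^2T_p^*M$ gives $F(X,Y)\langle F,F\rangle_g=0$, hence $\langle F,F\rangle_g\equiv0$; therefore at every point $F$ is a decomposable null $2$-form $\theta\wedge\eta$ with $\theta$ null, $\eta$ spacelike and $\theta\perp\eta$, and every $A_{X,Y}$ preserves $\ker F$ and hence its radical $\ell=\mathbb R\,\theta^{\sharp}$. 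Thus $\ell\subset\ker F$ is a $\nabla$-parallel null line inside a $\nabla$-parallel degenerate distribution. To finish, one can either analyse the holonomy algebra $\hol(\nabla)=\spa\{R_{X,Y}\}\subseteq\co(1,n-1)$ — note that $\id\notin\hol(\nabla)$, for otherwise $R=0$; hence $\hol(\nabla)=\mathbb R H\oplus\hol_0$ with $H=\id+A_0$, $A_0\cdot R=2R$ and $\hol_0\cdot R=0$, and the eigenvalue condition $A_0\cdot R=2R$ for $A_0\in\so(1,n-1)$, combined with the first Bianchi identity and the fact that the scaling part of $R_{X,Y}$ equals $F(X,Y)$, is highly restrictive — or one can pass to coordinates adapted to the parallel null structure $\ell\subset\ell^{\perp}\subset\ker F$ and solve the equations $\nabla R=0$ directly, much as in the treatment of the general recurrent case in the body of the paper. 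Either way one is led to $R=0$, whence $F=0$, contradicting that $F$ is nowhere zero; therefore $F\equiv0$ and the Weyl structure is closed.
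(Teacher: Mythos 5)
Your reductions at the start are fine: $F=d\omega_g$ is conformally invariant, $R_{X,Y}=F(X,Y)\,\id+A_{X,Y}$ with $A_{X,Y}\in\so(T_pM,g)$ follows from differentiating \eqref{nablag}, $\nabla R=0$ gives $\nabla F=0$, and the weight argument killing all complete $g$-contractions of $R$ (so in particular $\langle F,F\rangle_g\equiv 0$) is correct. But the step ``$\langle F,F\rangle_g=0$, therefore $F$ is a decomposable null $2$-form $\theta\wedge\eta$ with $\theta$ null'' is a non sequitur: in Lorentzian signature a $2$-form of zero norm need not be decomposable, e.g. $F=p^\flat\wedge q^\flat+e_1^\flat\wedge e_2^\flat$ in a Witt basis has $\langle F,F\rangle_g=0$ but rank $4$. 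Vanishing of the single invariant $\langle F,F\rangle_g$ does not yield the null line $\ell$ on which the rest of your argument rests. (This particular gap is repairable: your own weight argument also kills $\operatorname{tr}\bigl((g^{-1}F)^{2k}\bigr)$ for all $k$, so $g^{-1}F$ is a nilpotent element of $\so(1,n+1)$, and nilpotent skew endomorphisms in Lorentzian signature are exactly the null rotations $p\wedge X$; but you would have to say this, since decomposability is what you actually use.)

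The more serious problem is that the proof stops exactly where the real work begins. Having produced a parallel null line inside a parallel degenerate distribution, you conclude with ``one can either analyse the holonomy algebra \dots or pass to coordinates \dots either way one is led to $R=0$.'' Neither alternative is carried out, and neither is routine: showing that the conditions $A_{X,Y}\cdot F=2F(X,Y)F$, the first Bianchi identity, and the eigenvalue condition on the holonomy algebra force $R=0$ is precisely the representation-theoretic case analysis that constitutes the paper's argument. In the paper this is the content of the proof of Theorem \ref{Thhol}: by Ambrose--Singer the holonomy algebra is spanned by the curvature operators, and the classification there shows that on every non-closed holonomy algebra the character $\rho$ defined by $\xi\cdot R=\rho(\xi)R$ is necessarily nonzero (e.g. $\rho(2\id+p\wedge q)=-5$ for $n=1$, and $\rho(\id+p\wedge q)\neq 0$ for $n\geq 2$; equivalently, Proposition \ref{Propmetrh} gives $\nabla R=-3\omega_h\otimes R$), so $\nabla R=0$ is incompatible with non-closedness. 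Your proposal reduces the proposition to an unproved claim of exactly this type, so as written it is not a complete proof.
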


Since a locally symmetric affine connection is (locally) the Levi-Civita connection for some metric in the conformal class, the study of locally symmetric Lorentzian Weyl manifolds reduces to the study of locally symmetric Lorentzian manifolds. In this paper we therefore relax this condition and consider Lorentzian Weyl structures with recurrent curvature tensors, i.e., curvature tensors satisfying
\begin{equation}
	\label{nablaR}
	\nabla R=\theta\otimes R
\end{equation}
for some 1-form $\theta$.  
The aim of the paper is to give a description of such structures. 
 Since the Lorentzian recurrent spaces are classified in \cite{WalkerRec},
 we will focus on the non-closed Weyl structures. 

In Section \ref{sec} we find the holonomy algebra of a general non-closed recurrent Lorentzian Weyl manifold. In particular, for $\dim M\geq 4$ we have the following statement. 

\begin{prop} \label{prop:distr}
Let $(M,c,\nabla)$ be a non-closed recurrent Lorentzian Weyl manifold. Suppose that $M$ is simply connected and let $\dim M =n+2\geq 4$. Then $M$ admits a uniquely defined $\nabla$-parallel distribution of space-like lines $\Ll$. The orthogonal distribution $\Ll^\bot$ is parallel as well and there exists a parallel isotropic vector field $p$ tangent to $\Ll^\bot$. The vector field $p$ is defined up to a constant multiple.  
\end{prop}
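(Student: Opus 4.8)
The plan is to compute the holonomy algebra $\hol_x$ of $\nabla$ at a point $x\in M$ and to read the three assertions off from it: since $M$ is simply connected, $\Hol_x$ is connected, so that $\nabla$-parallel distributions on $M$ correspond to $\hol_x$-invariant subspaces of $T_xM$, and a vector annihilated by $\hol_x$ extends to a $\nabla$-parallel vector field. Fix $g\in c$, put $\omega=\omega_g$, and let $F=d\omega$ be the globally defined Faraday $2$-form. The starting point is two reductions of the recurrence condition \eqref{nablaR} to algebra at $x$. Since $\nabla$-parallel transport along any curve carries the tensor $R$ to $fR$ for a nowhere-zero function $f$, the Ambrose--Singer theorem gives $\hol_x=\spa\{R_x(X,Y):X,Y\in T_xM\}$; applying the Ricci commutation formula to the tensor $R$ and using \eqref{nablaR} yields $R(X,Y)\cdot R=d\theta(X,Y)\,R$, where $R(X,Y)$ acts as a derivation, so each $A\in\hol_x$ acts on $R_x$ by a scalar $\mu(A)$, the functional $\mu$ is a Lie-algebra character, and therefore $[\hol_x,\hol_x]$ annihilates $R_x$. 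On the other hand, \eqref{nablag} forces the $\mathbb R\,\id$-component of $R(X,Y)\in\frakgl(T_xM)$ to be a nonzero multiple of $F(X,Y)$, so $\hol_x\subseteq\co(T_xM)$; and a contraction of \eqref{nablaR} gives $\nabla F=\theta\otimes F$, so $F$ (hence $R$) has no zeros because the structure is non-closed, and the $\mathbb R\,\id$-part of $\hol_x$ is nontrivial at every point.

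Next I would combine these constraints with an explicit description of the curvature of a Weyl connection. By \eqref{formulaK} one expresses $R$ through the Levi-Civita curvature of $g$ and the $1$-jet of $\omega$, so that \eqref{nablaR} turns into an overdetermined system. Its analysis, together with the identity $\hol_x\cdot R_x\subseteq\mathbb R\,R_x$ and the nonvanishing trace part, forces the conformal Weyl tensor of $c$ to vanish when $n+2\geq4$, so that $c$ is conformally flat, and then determines $\hol_x\subseteq\co(T_xM)$ up to the following picture: $\hol_x$ preserves an isotropic line $\ell\subseteq T_xM$ on which it acts trivially, and it preserves a space-like line $\Ll_x$ orthogonal to $\ell$ (so $\Ll_x\subseteq\ell^\bot$).

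It remains to transport these data back to $M$. The $\hol_x$-invariant space-like line $\Ll_x$ integrates to a $\nabla$-parallel distribution $\Ll$ of space-like lines. Its orthogonal distribution $\Ll^\bot$ is then automatically $\nabla$-parallel: for a local nonvanishing section $L$ of $\Ll$ and $Y\in\Gamma(\Ll^\bot)$, differentiating $g(Y,L)=0$ and using $(\nabla_Xg)(Y,L)=-2\omega(X)g(Y,L)=0$ together with $g(Y,\nabla_XL)=0$ (since $\nabla_XL\in\Gamma(\Ll)$) gives $g(\nabla_XY,L)=0$, so $\nabla_XY\in\Gamma(\Ll^\bot)$. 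Finally, the $\hol_x$-invariant isotropic line $\ell$ integrates to a $\nabla$-parallel line distribution; since $\hol_x$ acts trivially on $\ell$ and $M$ is simply connected, this distribution admits a global $\nabla$-parallel section $p$, which is isotropic, and $\Ll_x\subseteq\ell^\bot$ forces $p\bot\Ll$, so $p$ is tangent to $\Ll^\bot$.

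I expect the main obstacle to be the middle step: analysing the system \eqref{nablaR} for a Weyl connection and extracting from it both the conformal flatness for $n+2\geq4$ and the precise flag $\ell\subseteq\ell^\bot$ together with the transverse space-like line. The two reductions in the first step are short and the transport in the last step is routine; essentially all the content lies in classifying the curvature tensors a Weyl connection can carry under the recurrence condition \eqref{nablaR}.
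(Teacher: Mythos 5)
Your overall skeleton is the paper's: recurrence plus Ambrose--Singer give $\hol_x=\spa\{R_x(X,Y)\}$ and the fact that $\hol_x$ preserves the line $\Real R_x$ (these are exactly \eqref{sv1} and \eqref{sv2}, which you rederive via the Ricci identity); the trace part of $R$ is a multiple of $d\omega$, so non-closedness forces $\hol_x\not\subset\so(1,n+1)$ and $R$ is nowhere zero; and once one knows that $\hol_x$ annihilates an isotropic vector and preserves a space-like line orthogonal to it, the passage to the parallel vector field $p$, the parallel line distribution $\Ll$ and the parallel $\Ll^\bot$ via simple connectedness and the holonomy principle is routine and correct as you state it.

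The problem is that essentially all of the content lies in the middle step, and there you only assert the conclusion: ``its analysis \dots forces the conformal Weyl tensor of $c$ to vanish \dots and then determines $\hol_x$'' is a description of what must be proved, not a proof. In the paper this step is Theorem \ref{Thhol}, and it is a genuine case-by-case argument: it uses the classification of holonomy algebras of non-closed Lorentzian Weyl structures from \cite{Andr} (irreducible, decomposable, and the parabolic algebras contained in $\co(1,n+1)_{\Real p}$), together with the explicit description of the spaces ${\mathcal R}(\g)$ of algebraic curvature tensors --- i.e.\ the first Bianchi identity, which your outline never invokes --- and then eliminates the irreducible case, the cases preserving a non-degenerate splitting, and most parabolic subcases by testing \eqref{sv1} and \eqref{sv2}, leaving only $\g=\Real(\id_{\Real^{1,n+1}}+p\wedge q)\ltimes p\wedge\Real^{n-1}$ for $n\geq 2$, whose invariant flag yields the proposition. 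Nothing in your sketch substitutes for this: the constraints you do establish (containment in $\co(1,n+1)$, preservation of $\Real R_x$, $[\hol_x,\hol_x]\cdot R_x=0$, nonzero trace part) do not by themselves produce an invariant isotropic line, let alone trivial action on it. Moreover, your intermediate claim that conformal flatness is forced at this stage is itself unproven and is not how the paper proceeds --- flatness of $c$ is obtained only after the local normal form (Theorem \ref{ThFinalLocal}, Proposition \ref{Propcisflat}) and is not needed for this proposition --- so it adds an extra unjustified burden rather than closing the gap.
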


We show that there is a preferred metric in the conformal class defined up to homothety.

\begin{prop} \label{Propmetrh}
	Let $(M,c,\nabla)$ be a non-closed recurrent Lorentzian Weyl manifold and  $\dim M=n+2\geq 4$. 	Suppose that the manifold $M$ is simply connected. Then there exists a metric $h\in c$ satisfying 
	 \begin{equation}
	\label{tildeg}
	\nabla h=-2\omega_h\otimes h,\quad \nabla R=-3\omega_h\otimes R.\end{equation}
		The metric $h$ is defined uniquely up to homothety.
	\end{prop}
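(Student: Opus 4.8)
The plan is to reduce the statement to the single identity $d\theta=-3\,d\omega_g$ for the recurrence $1$-form $\theta$ (defined by $\nabla R=\theta\otimes R$; it is independent of the chosen $g\in c$, since $\nabla$ and $R$ are), and then to prove that identity from the two parallel objects of Proposition~\ref{prop:distr}. For the reduction, fix $g\in c$; for $h=e^{2f}g$ one has $\omega_h=\omega_g-df$, so the first equation of \eqref{tildeg} is automatic, while the second is equivalent to $\theta=-3\omega_h$, i.e.\ to $df=\omega_g+\tfrac13\theta$. On a simply connected $M$ this is solvable precisely when $\omega_g+\tfrac13\theta$ is closed, that is, when $d\theta=-3\,d\omega_g$; the solution $f$ is then unique up to an additive constant, and since every $h'\in c$ satisfying \eqref{tildeg} has $\omega_{h'}=-\tfrac13\theta$ and a metric is determined by its Weyl form up to a constant factor, the resulting $h$ is unique up to homothety. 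So it remains to establish $d\theta=-3\,d\omega_g$.

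By Proposition~\ref{prop:distr} the line bundle $\Ll$ admits a nowhere-zero global section $e$ (as $M$ is simply connected), with $\nabla e=\alpha\otimes e$ for a $1$-form $\alpha$, and there is a parallel isotropic $p$, $\nabla p=0$. From $\nabla_X e=\alpha(X)e$ one gets $R(X,Y)e=d\alpha(X,Y)\,e$; differentiating $g(e,e)$ by means of $\nabla g=-2\omega_g\otimes g$ shows $\alpha-\omega_g$ is exact, so $d\alpha=d\omega_g$ and $R(X,Y)e=d\omega_g(X,Y)\,e$. The $1$-forms $e^\flat=g(e,\cdot)$ and $p^\flat=g(p,\cdot)$ are recurrent: $\nabla e^\flat=(\alpha-2\omega_g)\otimes e^\flat$ and $\nabla p^\flat=-2\omega_g\otimes p^\flat$. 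For a torsion-free connection $\nabla\eta=\beta\otimes\eta$ implies $d\eta=\beta\wedge\eta$, so applying $d$ a second time, together with $d\alpha=d\omega_g$, gives $d\omega_g\wedge e^\flat=0$ and $d\omega_g\wedge p^\flat=0$; since $e^\flat$ and $p^\flat$ are pointwise linearly independent, this forces $d\omega_g=c\,e^\flat\wedge p^\flat$ for a function $c$, nonzero exactly on the open set $U$ where the Weyl structure is non-closed.

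Now, feeding $R(X,Y)e=d\omega_g(X,Y)e$ into the recurrence $\nabla R=\theta\otimes R$ and using $\nabla e=\alpha\otimes e$ yields $\nabla(d\omega_g)=\theta\otimes d\omega_g$ for the $2$-form $d\omega_g$. Substituting $d\omega_g=c\,e^\flat\wedge p^\flat$ and using the recurrence of $e^\flat$ and $p^\flat$, we obtain on $U$
\begin{equation*}
\bigl(d\log|c|+\alpha-4\omega_g\bigr)\otimes\bigl(e^\flat\wedge p^\flat\bigr)=\theta\otimes\bigl(e^\flat\wedge p^\flat\bigr),
\end{equation*}
hence $\theta=d\log|c|+\alpha-4\omega_g$ on $U$. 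Taking $d$ and using $d\alpha=d\omega_g$ gives $d\theta=d\omega_g-4\,d\omega_g=-3\,d\omega_g$ on $U$, and by density of $U$ (the structure equations being analytic) on all of $M$; the metric $h$ then exists and is unique up to homothety as above.

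The main obstacle is not the displayed computation but two supporting points: first, recognizing that $d\omega_g$ has the product form $c\,e^\flat\wedge p^\flat$, which is exactly where the full strength of Proposition~\ref{prop:distr} — both the parallel space-like line and the parallel isotropic vector — is needed; second, the passage from $U$ to $M$, since on the locus where the structure is closed $d\omega_g$ vanishes and the formula for $\theta$ degenerates, so there one argues by density (or checks directly that $\theta$ is closed).
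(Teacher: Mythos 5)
Your overall strategy is correct and genuinely different from the paper's: the paper computes the recurrence form explicitly in the adapted coordinates of Theorem~\ref{MainTh} (obtaining $\theta=\partial_n(F+\ln|\partial_n\dot F|)\,dx^n$, hence $\theta=-3\omega+d\varphi$ locally, which is Proposition~\ref{proptildeg}) and then globalizes by a \v{C}ech cocycle argument on a simply connected $M$, whereas you derive the key identity $d\theta=-3\,d\omega_g$ invariantly from Proposition~\ref{prop:distr} and integrate $\omega_g+\tfrac13\theta$ directly. The computational core of your argument is sound: $R(X,Y)e=d\alpha(X,Y)e=d\omega_g(X,Y)e$, the recurrences $\nabla e^\flat=(\alpha-2\omega_g)\otimes e^\flat$, $\nabla p^\flat=-2\omega_g\otimes p^\flat$, the divisibility argument giving $d\omega_g=c\,e^\flat\wedge p^\flat$, the identity $\nabla(d\omega_g)=\theta\otimes d\omega_g$, and the formula $\theta=d\log|c|+\alpha-4\omega_g$ on $U=\{d\omega_g\neq 0\}$ all check out.

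The genuine gap is the passage from $U$ to $M$. The appeal to analyticity is not available: the structures are only smooth (by Theorem~\ref{ThFinalLocal} they are built from arbitrary smooth functions $\psi$), so you cannot conclude that $U$ is dense this way; and the parenthetical fallback of ``checking directly that $\theta$ is closed'' on the complementary region is neither carried out nor meaningful there, since on an open set where $R$ vanished the recurrence form would not even be determined --- and note that nowhere-vanishing of $R$ is also silently used in your uniqueness step $\omega_{h'}=-\tfrac13\theta$, so the issue cannot be sidestepped. Fortunately you already have what is needed to close it: your identity $\nabla(d\omega_g)=\theta\otimes d\omega_g$, with $\theta$ a globally defined smooth $1$-form as in \eqref{nablaR}, says that along any curve the parallel transport of $d\omega_g$ is rescaled by $\exp\bigl(\int\theta\bigr)$; hence the zero set of $d\omega_g$ is open and closed, and since $M$ is connected and the structure is non-closed, $d\omega_g$ vanishes nowhere, i.e.\ $U=M$ (and then $R\neq 0$ everywhere because $R(X,Y)e=d\omega_g(X,Y)e$). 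With that observation replacing the density claim, your proof is complete; this is essentially the same point the paper settles inside the proof of Theorem~\ref{MainTh}, where it shows that $\partial_n\dot F$ and $d\omega$ cannot vanish at any point.
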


Following \cite{BHMM}, we consider the density bundle of weight $w$,
$$ L^w=P_{CO}\times_{|\det|^{\frac{w}{n+2}}}\Real,$$
where $P_{CO}$ is the conformal frame bundle.  According to \cite{BHMM}, each metric $g\in c$ defines a non-vanishing section  $$l_g\in  L^1$$  satisfying $$\nabla l_g=\omega_g\otimes l_g.$$  
 We conclude that $$\nabla (R\otimes l^{3}_{h})=0,$$ i.e., the tensor field $R$ is weighted parallel.

\begin{cor}
	If the curvature tensor of a non-closed Weyl  Lorentzian structure is recurrent, then it is weighted parallel. 
\end{cor}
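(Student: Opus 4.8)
The plan is to deduce the corollary directly from Proposition \ref{Propmetrh} together with the way the Weyl connection acts on density bundles. Since weighted parallelism of $R$ is a pointwise, tensorial condition, I would first pass to the universal cover, where Proposition \ref{Propmetrh} applies and supplies a metric $h$ in the (pulled‑back) conformal class with $\nabla h=-2\omega_h\otimes h$ and, crucially, $\nabla R=-3\,\omega_h\otimes R$, cf.\ \eqref{tildeg}. (If $R\equiv 0$ there is nothing to prove, so assume $R$ is not identically zero; then the recurrence \eqref{nablaR} forces $\theta=-3\,\omega_h$ wherever $R\neq 0$.)

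Next I would record the action of $\nabla$ on ${\mathcal L}^1$. Writing $\nabla=\nabla^g+K^g$ as in \eqref{formulaK}, a short computation with an orthonormal frame shows that the fibrewise endomorphism $K^g_X$ has trace $(n+2)\,\omega_g(X)$, hence $\nabla_X\operatorname{vol}_g=-(n+2)\,\omega_g(X)\,\operatorname{vol}_g$, and therefore the canonical section $l_g=|\operatorname{vol}_g|^{-\frac1{n+2}}$ of ${\mathcal L}^1$ satisfies $\nabla l_g=\omega_g\otimes l_g$, as asserted in the text preceding the corollary. Applying this with $g=h$ and using the Leibniz rule gives
\begin{equation*}
\nabla\bigl(R\otimes l_h^{3}\bigr)=(\nabla R)\otimes l_h^{3}+R\otimes\nabla\bigl(l_h^{3}\bigr)
=-3\,\omega_h\otimes R\otimes l_h^{3}+3\,\omega_h\otimes R\otimes l_h^{3}=0 ,
\end{equation*}
so $R\otimes l_h^{3}$ is a $\nabla$-parallel section of the corresponding ${\mathcal L}^3$-weighted tensor bundle; that is, $R$ is weighted parallel with weight $3$. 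Because $h$ is unique up to homothety $h\mapsto\lambda^2 h$ and then $l_{\lambda^2 h}=\lambda^{-1}l_h$, the weight‑$3$ density $l_h^{3}$, and hence the parallel weighted tensor $R\otimes l_h^{3}$, is canonically attached to $(M,c,\nabla)$ up to a positive constant factor.

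Finally, the identity $\nabla(R\otimes l_h^{3})=0$ established on the universal cover descends to $M$: it is local, and $R$ together with ${\mathcal L}^3$ and its induced connection live on $M$, so it suffices to carry out the above on a neighbourhood of each point. For $\dim M\le 3$ one argues identically, replacing Proposition \ref{Propmetrh} by the analogous description of the holonomy and of the preferred metric obtained in Section \ref{sec}. I do not expect a genuine obstacle here; the only points needing care are the reduction to the setting of Proposition \ref{Propmetrh} and the verification $\nabla l_g=\omega_g\otimes l_g$, after which the corollary is immediate.
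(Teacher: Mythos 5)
Your proposal is correct and follows essentially the same route as the paper: the preferred metric $h$ of Proposition \ref{Propmetrh} with $\nabla R=-3\,\omega_h\otimes R$, the identity $\nabla l_g=\omega_g\otimes l_g$, and the Leibniz rule giving $\nabla(R\otimes l_h^{3})=0$. One small caveat: in dimension $3$ the argument is not literally identical --- for the structures with $2$-dimensional holonomy the paper shows the recurrence form is $-\tfrac{5}{2}\omega_h$ modulo an exact term, so the parallel weighted tensor is $R\otimes l_h^{5/2}$ rather than $R\otimes l_h^{3}$; this changes the weight but not the conclusion that $R$ is weighted parallel.
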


The non-closed Weyl manifolds with weighted parallel curvature tensors	provide generalization of pseudo-Riemannian symmetric spaces. Another approach to symmetric spaces in conformal geometry may be found in~\cite{GZ}.

In Section \ref{sec:local} we use the holonomy algebras found in Section \ref{sec} and results from \cite{ParSpin} to obtain a local form of the general non-closed recurrent Weyl manifold $(M,c,\nabla)$.

\begin{theorem}\label{ThFinalLocal} 	Let $(M,c,\nabla)$ be a non-closed recurrent Lorentzian Weyl manifold and  $\dim M=n+2\geq 4$. Then in a  neighborhood of each point of $M$ the local conformal class contains the metric  
	$$h=(dt)^2+e^{2G}\left(2dvdu+\sum_{i=1}^{n-1}(dx^i)^2\right)$$
	where 
	\begin{equation}
	\label{funG}G=-\ln|u+\psi(t)|+\frac{1}{2}\ln|\psi'(t)|,\end{equation}
	$\psi(t)$ is a function with non-vanishing $\psi'(t)$, and 
	   $t,v,x^1,\dots,x^{n-1},u$ are local coordinates. 
	The 1-form corresponding to $h$ is given by
	$$\omega_h=-\partial_tGdt.$$ The pair $(h,\omega_h)$ satisfies \eqref{tildeg}.
\end{theorem}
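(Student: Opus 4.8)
The plan is to start from the canonical data already established and reduce the metric to a concrete normal form, and then let the recurrence identity fix the remaining functional freedom. Since the assertion is local, we may assume $M$ is simply connected. Fix the preferred metric $h\in c$ provided by Proposition~\ref{Propmetrh}, so that $\nabla h=-2\omega_h\otimes h$ and $\nabla R=-3\omega_h\otimes R$, equivalently $\nabla(R\otimes l_h^3)=0$. By Proposition~\ref{prop:distr} there is a $\nabla$-parallel space-like line field $\Ll$, its orthogonal complement $\Ll^\bot$ is $\nabla$-parallel of rank $n+1$, and there is a $\nabla$-parallel isotropic vector field $p$ tangent to $\Ll^\bot$. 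I will also use the holonomy algebra of $\nabla$ computed in Section~\ref{sec}; in particular, for $n\geq 2$ the conformal Weyl tensor of $(c,\nabla)$ vanishes, so $c$ is conformally flat and hence $h$ is a conformally flat metric.

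Next I would introduce adapted coordinates. The distribution $\Ll^\bot$ is integrable (being parallel for a torsion-free connection), and its unit normal field spans the space-like line bundle $\Ll$; taking Gaussian normal coordinates for this hypersurface foliation yields $h=(dt)^2+\bar h_t$, where $\bar h_t$ is a $t$-dependent Lorentzian metric on an $(n+1)$-dimensional leaf that carries the $\nabla$-parallel null vector $p$. Feeding the holonomy algebra of $\nabla$ from Section~\ref{sec} into the local normal forms of \cite{ParSpin}, and combining this with conformal flatness of $c$ and the parallelism of $p$ and of $\Ll^\bot$, one reduces $\bar h_t$ to $e^{2G}\big(2\,dv\,du+\sum_{i=1}^{n-1}(dx^i)^2\big)$ in suitable leaf coordinates $v,x^1,\dots,x^{n-1},u$, with $G$ a function of the coordinates. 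Using the parallelism of $\Ll$ and $p$ together with the relation $\nabla h=-2\omega_h\otimes h$ — compared against $\nabla=\nabla^h+K^h$ with $K^h$ given by \eqref{formulaK} — one then shows that $G$ depends only on $(t,u)$ and that $\omega_h=-\partial_t G\,dt$.

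It remains to determine $G$. With $h=(dt)^2+e^{2G(t,u)}\big(2\,dv\,du+\sum_i(dx^i)^2\big)$ and $\omega_h=-\partial_t G\,dt$ one computes the curvature tensor $R$ of $\nabla=\nabla^h+K^h$; because the structure is recurrent, $R$ is a section of a rank-one subbundle of the curvature bundle, so the identity $\nabla(R\otimes l_h^3)=0$ of Proposition~\ref{Propmetrh}, after dividing out the rank-one factor, becomes a single ordinary differential equation in $t$ for $G$. Integrating it gives $e^{-G}$ affine in $u$ with $t$-dependent coefficients subject to one further relation; absorbing the two constants of integration by the residual freedom — a translation $u\mapsto u+\mathrm{const}$ among the leaf coordinates, and the homothety freedom of $h$ — produces exactly $G=-\ln|u+\psi(t)|+\frac{1}{2}\ln|\psi'(t)|$ with $\psi'\neq 0$, as in \eqref{funG}. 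A direct substitution then verifies that $(h,\omega_h)$ satisfies \eqref{tildeg}, which in particular re-confirms recurrence. The genuinely non-routine point is the reduction in the previous paragraph: passing from ``holonomy data plus a parallel isotropic vector plus conformal flatness'' to the explicit reduced ansatz $h=(dt)^2+e^{2G(t,u)}\big(2\,dv\,du+\sum_i(dx^i)^2\big)$ with $\omega_h$ pointing purely in the $dt$ direction requires simultaneously exploiting the parallel distributions, the Walker-type normal form, and the vanishing of the conformal curvature; once that normal form is in hand, extracting and solving the ODE for $G$ is a bounded computation.
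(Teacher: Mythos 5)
Your outline identifies the right ingredients (the preferred metric $h$ from Proposition~\ref{Propmetrh}, the parallel structures from Proposition~\ref{prop:distr}, the normal forms of \cite{ParSpin}), but the actual content of the theorem is asserted rather than proved, and one input is used circularly. Concretely: (i) you invoke conformal flatness of $c$ as an ingredient, claiming it follows from the holonomy computation of Section~\ref{sec}; but nothing in that section establishes vanishing of the conformal Weyl tensor, and in the paper Proposition~\ref{Propcisflat} is deduced \emph{from} Theorem~\ref{ThFinalLocal}, not the other way around — so as written this step is unproven and, relative to the paper's logic, circular. (ii) The Gaussian-normal-coordinate step already presupposes what is to be shown: writing $h=(dt)^2+\bar h_t$ with the level sets of $t$ equal to the leaves of $\Ll^\bot$ requires the unit section $X$ of $\Ll$ to be $\nabla^h$-geodesic, equivalently that the dual 1-form $\eta=h(X,\cdot)$ is closed; since $\nabla\eta=-\omega_h\otimes\eta$, torsion-freeness gives $d\eta=-\omega_h\wedge\eta$, so closedness is equivalent to $\omega_h$ being proportional to $\eta$ — precisely the statement $\omega_h=-\partial_tG\,dt$ that you defer to later. (iii) The sentence ``feeding the holonomy algebra into the local normal forms of \cite{ParSpin} \dots one reduces $\bar h_t$ to $e^{2G}(2dv\,du+\sum(dx^i)^2)$'' is exactly the hard part and is not carried out. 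The \cite{ParSpin} normal form produces a metric $g=2dv\,du+\sum(dx^i)^2+e^{-2F}(dx^n)^2+H(du)^2$ with $\omega=\dot F\,du$ (1-form in the $du$-direction), not your ansatz; getting from there to the stated form requires the curvature computation and the recurrence condition forcing $H$ quadratic and the Riccati equation $\ddot F-\dot F^2=-a(u)$ (Theorem~\ref{MainTh}), the rescaling to $h$ (Proposition~\ref{proptildeg}), the coordinate change of \cite{KT} normalizing $a(u)=0$ (Proposition~\ref{lemchangeF}), and a final reparametrization $x^n\mapsto t$. None of this is replaced by an alternative argument in your proposal.

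The final step is also misdescribed: with the ansatz $h=(dt)^2+e^{2G(t,u)}(2dv\,du+\sum(dx^i)^2)$, $\omega_h=-\partial_tG\,dt$, the weighted-parallelism condition does not reduce to ``a single ODE in $t$''; the characterization of \eqref{funG} is a system of PDEs in $(t,u)$, essentially $\partial_u^2G=(\partial_uG)^2$ (an ODE in $u$, giving $e^{-G}$ affine in $u$) together with the compatibility $|\partial_t\partial_uG|=e^{2G}$, and the residual freedom is not merely ``two constants of integration'' — the functional parameter $\psi(t)$ survives, and the normalization $c(t)=\tfrac12\ln|\psi'(t)|$ comes from the second equation, not from translations and homotheties alone. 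Since you have not actually performed the curvature/recurrence computation in either set of coordinates, the claim that the ansatz integrates to exactly \eqref{funG} is unsupported. In summary, the proposal is a reasonable roadmap but has genuine gaps at its two load-bearing points: the derivation of the reduced ansatz (including $\omega_h\propto dt$ and conformal flatness) and the derivation and solution of the equations determining $G$.
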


It is easily checked that the local metric  $g=e^{-2G}h$ is flat, i.e., the conformal class $c$ is flat, and we obtain the following proposition. 

\begin{prop}\label{Propcisflat}
	All non-closed recurrent Lorentzian Weyl structures on manifolds of dimension greater than 3 are conformally flat. 
\end{prop}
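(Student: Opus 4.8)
The plan is to deduce conformal flatness directly from the explicit local normal form of Theorem~\ref{ThFinalLocal}. Recall that a conformal class is conformally flat if and only if it admits, in a neighbourhood of each point, a flat representative; this is a purely local statement, which is why Proposition~\ref{Propcisflat} needs no global hypotheses. Theorem~\ref{ThFinalLocal} supplies, around every point of such an $(M,c,\nabla)$, a metric $h\in c$ of the stated shape, so it suffices to show that the pointwise-conformal metric
\[
g:=e^{-2G}h=e^{-2G}(dt)^2+2\,dv\,du+\sum_{i=1}^{n-1}(dx^i)^2
\]
is flat. Since $e^{-2G}=\dfrac{(u+\psi(t))^2}{|\psi'(t)|}$, the only non-constant coefficient of $g$ is $f:=g_{tt}=e^{-2G}$; the block $2\,dv\,du+\sum(dx^i)^2$ is the standard flat pseudo-Euclidean metric, and no coefficient depends on $v$ or on the $x^i$.

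Next I would run the (short) curvature computation. With $g^{tt}=1/f$, $g^{uv}=1$ and $g^{x^ix^i}=1$, the only non-vanishing Christoffel symbols are, up to symmetry, $\Gamma^t_{tt}=\dfrac{f_t}{2f}$, $\Gamma^t_{tu}=\dfrac{f_u}{2f}$ and $\Gamma^v_{tt}=-\dfrac{f_u}{2}$; in particular every Christoffel symbol with a lower $v$- or $x^i$-index vanishes, and the $x^i$-directions (being flat and decoupled) contribute nothing. Consequently $R^a_{bcd}$ can be non-zero only when $\{c,d\}=\{t,u\}$ and $b\in\{t,u\}$, and a direct substitution shows that each of these components either vanishes identically or is a non-zero multiple of $2f f_{uu}-(f_u)^2$. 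Hence $g$ is flat precisely when $2f f_{uu}=(f_u)^2$, i.e.\ when $\partial_u^2\sqrt{f}=0$, i.e.\ when $e^{-G}$ is affine in $u$ with $t$ held fixed. This is exactly what the function~\eqref{funG} provides: on any neighbourhood where $u+\psi(t)$ has a fixed sign one has $e^{-G}=|\psi'(t)|^{-1/2}\,|u+\psi(t)|$, which is linear in $u$ because $\psi'(t)$ does not depend on $u$; and every point in the domain of the normal form lies in such a neighbourhood. Therefore $g$ is flat, $c$ is conformally flat, and the proposition follows.

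I do not expect a genuine obstacle: all the substance is already contained in Theorem~\ref{ThFinalLocal}, and the present step amounts to a bookkeeping check of a handful of curvature components plus the observation --- forced by the precise shape of $G$ in~\eqref{funG} --- that the warping function $e^{-G}$ is affine in the null coordinate $u$. If one prefers to bypass the curvature computation, an alternative is to write down explicit flat coordinates for $g$ by replacing $v$ and $t$ with suitable primitives built from $\psi$; but computing the curvature is the cleanest way to see \emph{why} the special form~\eqref{funG} is exactly what makes $c$ flat.
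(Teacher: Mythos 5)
Your proof is correct and follows exactly the paper's route: the paper simply asserts that the conformally rescaled metric $g=e^{-2G}h$ is flat ("it is easily checked"), and your argument supplies the details of that check, correctly reducing flatness to $2ff_{uu}=(f_u)^2$, i.e.\ to $e^{-G}$ being affine in $u$, which the explicit form \eqref{funG} guarantees. No gaps; this is the same approach with the omitted computation written out.
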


By Theorem \ref{ThFinalLocal}, a recurrent Lorentzian Weyl structure is locally defined by a function $\psi(t)$. In \eqref{funG} we require $|\psi'(t)|>0$, which implies that either $\psi'(t)>0$ or $\psi'(t)<0$. The transformation $( \tilde v, \tilde u, \tilde x^i, \tilde t)= (-v,-u,x^i,t)$ preserves the coordinate form of the metric, but with $\tilde \psi(\tilde t)=-\psi(t)$. Since $\frac{d}{d\tilde t} \tilde \psi(\tilde t) = - \frac{d}{dt} \psi(t)$, a Weyl structure given by $\psi(t)$ with $\psi'(t)<0$ is equivalent to the one given by $\tilde \psi(\tilde t)=-\psi(t)$, which satisfies $\tilde \psi'(\tilde t)>0$. By this reason we may assume that $\psi'(t)>0$, which lets us replace $|\psi'(t)|$ in \eqref{funG} with $\psi'(t)$ without loosing any Weyl structures. Notice that the domain of definition is transformed under this coordinate transformation. If the Weyl structure satisfies $u+\psi(t)>0$, then it follows that $\tilde u + \tilde \psi(\tilde t) <0$. Similarly, $u+\psi(t)<0$ implies $\tilde u + \tilde \psi(\tilde t) >0$. 

Section \ref{sect:coord} is devoted to the investigation of the remaining coordinate freedom. In Proposition \ref{psl2} we find the vector fields preserving the coordinate form of Theorem \ref{ThFinalLocal}. By integrating the vector fields we see that the local equivalence problem of non-closed recurrent Lorentzian Weyl manifolds of dimension $\geq 4$ can be reduced to the local equivalence problem of the space $\{\psi \in C^\infty (\mathbb R) \mid \psi'(t)>0\}$ with respect to the action \eqref{eq:groupactionD4} of the Lie group $\SAff \times \PSL \times \mathbb Z_2$.

\begin{theorem} Two local recurrent Lorentzian Weyl structures defined by functions $\psi_1$ and $\psi_2$ with $\psi_i'(t)>0$ for $i\in\{1,2\}$ are equivalent if and only if 
	\[\psi_2(t)= \mathrm{sgn}(B)  \frac{a \psi_1(A+ Bt) + b}{c \psi_1(A+Bt) +d},\]
	for some constants $a,b,c,d,A,B$ satisfying $ad-bc=1, B \neq 0$. 
\end{theorem}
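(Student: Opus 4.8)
The strategy is to translate the geometric equivalence into the equivalence of the defining functions $\psi$ under a concrete Lie group action, and then explicitly solve the resulting functional equation. By the discussion preceding the theorem, a local equivalence of two recurrent Weyl structures is the same thing as a local diffeomorphism carrying the normal form of Theorem~\ref{ThFinalLocal} (with $|\psi'|$ replaced by $\psi'$, after the $\mathbb Z_2$-normalization $\psi'>0$) to itself. So the first step is to invoke Proposition~\ref{psl2}, which identifies the vector fields preserving the coordinate form of the metric $h$; integrating them gives the group of admissible coordinate changes, and tracking how such a coordinate change acts on the pair $(h,\omega_h)$ — equivalently on the function $G$ of \eqref{funG} — yields the induced action \eqref{eq:groupactionD4} of $\SAff\times\PSL\times\mathbb Z_2$ on the space $\{\psi\in C^\infty(\mathbb R)\mid \psi'>0\}$.

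The second step is purely about spelling out that action. The $\SAff$-factor (the $1$-dimensional special affine group) reparametrizes the $t$-variable: $t\mapsto A+Bt$, and since only $\psi'$ enters and the metric is built from $G$ with the $\tfrac12\ln|\psi'|$ term, the effect on $\psi$ is $\psi(t)\mapsto \psi(A+Bt)$ up to the normalization forcing the leading coefficient to match — this is where the factor $B$ shows up after one renormalizes so that the transformed function again has derivative with the correct sign/normalization. The $\PSL$-factor acts on the remaining light-cone coordinates $(v,u)$ by Möbius transformations, and because $G=-\ln|u+\psi(t)|+\dots$, a Möbius change $u\mapsto \frac{\alpha u+\beta}{\gamma u+\delta}$ forces the compensating change $\psi\mapsto \frac{a\psi+b}{c\psi+d}$ with $\begin{pmatrix}a&b\\c&d\end{pmatrix}\in\mathrm{SL}_2(\mathbb R)$; the sign ambiguity ($\mathrm{PSL}$ versus $\mathrm{SL}$) and the $B<0$ case together are absorbed by the $\mathbb Z_2$-factor and produce the $\mathrm{sgn}(B)$ prefactor in the stated formula. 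Composing the two factors gives exactly
\[
\psi_2(t)=\mathrm{sgn}(B)\,\frac{a\,\psi_1(A+Bt)+b}{c\,\psi_1(A+Bt)+d},\qquad ad-bc=1,\ B\neq 0,
\]
and conversely every such transformation arises from an admissible coordinate change, which proves both implications.

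The main obstacle I expect is the bookkeeping in the second step: verifying that a coordinate change preserving the \emph{form} of $h$ (not $h$ itself, only up to the relabeling of the function $\psi$) really does induce precisely the displayed fractional-linear action, and in particular pinning down how the normalizations interact — the requirement $\psi'>0$, the homothety freedom in $h$ from Proposition~\ref{Propmetrh}, the projective (rather than linear) nature of the $u$-action, and the $\mathbb Z_2$ from the earlier paragraph all have to be reconciled so that the parameter group is exactly $\SAff\times\PSL\times\mathbb Z_2$ with the cocycle $\mathrm{sgn}(B)$ and the constraint $ad-bc=1$, rather than some larger or smaller group. Once Proposition~\ref{psl2} is in hand this is a finite computation, but it is the step where an error would most easily creep in, so I would carry it out by first treating the $t$-reparametrization and the $(v,u)$-Möbius transformation separately and then checking that they commute appropriately to give the semidirect-free product structure claimed.
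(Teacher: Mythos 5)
Your proposal follows essentially the same route as the paper: it reduces the equivalence of structures to the action on $\psi$ induced by the coordinate changes preserving the normal form of Theorem \ref{ThFinalLocal}, which is exactly what Section \ref{sect:coord} does by finding the form-preserving vector fields in Proposition \ref{psl2}, integrating them, and adjoining the discrete $\mathbb Z_2$ transformation to obtain the action \eqref{eq:groupactionD4} of $\SAff\times\PSL\times\mathbb Z_2$. Your identification of the composed action with the stated fractional-linear formula (with $\mathrm{sgn}(B)$ coming from the $\mathbb Z_2$ element and $B>0$ from the $\SAff$ factor) matches the paper's argument, so the plan is correct and not genuinely different.
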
  	

The local equivalence problem is solved for generic non-closed recurrent Lorentzian Weyl structures in Section \ref{sect:diffinv} by showing (in Proposition \ref{Prop:invariants}) that the differential invariants are generated by the two differential invariants
 \[  \frac{(\psi_1^2 \psi_4-4 \psi_1 \psi_2 \psi_3+3 \psi_2^3)^2}{(2 \psi_1 \psi_3-3 \psi_2^2)^3}, \qquad \frac{\psi_1 (\psi_1^2 \psi_5-5 \psi_1 \psi_2 \psi_4+5 \psi_2^2 \psi_3)}{(2 \psi_1 \psi_3-3 \psi_2^2)^2},\]
where $\psi_1, \cdots, \psi_5$ are coordinates on the space of $5$-jets of $\psi(t)$.

Suppose that the manifold $M$ is simply connected.  Let us fix a metric $h \in c$ as in Proposition~\ref{Propmetrh}. Since the manifold $M$ is simply connected,  there exists a non-vanishing section $X$ of the distribution $\Ll$ from Proposition \ref{prop:distr}. Let us fix a section $X$ satisfying $$h(X,X)=1.$$
Note that if $h$ is not fixed, then the section $X$ is defined uniquely up to multiplication by a non-zero constant. 
In Section \ref{sec:global} we prove the following global result.

\begin{theorem} \label{Thglobal}	Let $(M,c,\nabla)$ be a non-closed recurrent Lorentzian Weyl structure and  $\dim M=n+2\geq 4$. Suppose that the manifold $M$ is simply connected. Fix a metric  $h \in c$ as in Proposition~\ref{Propmetrh}. Suppose that the vector field $X$ is complete. Then the manifold $M$ is diffeomorphic to the product
	$$M\cong \Real\times N,$$
	where $N$ is an integral manifold of the $\nabla$-parallel distribution orthogonal to $X$. The distribution $p^\bot$ on $N$ orthogonal to the $\nabla$-parallel isotropic vector field $p$ is involutive. The corresponding codimension-one foliation ${\mathcal F}$ on $N$ admits a transversal projective structure. There is a foliated atlas on $N$ such that to each chart $U$ with coordinates $v,x^1,\dots ,x^{n-1}, u$, where $u$ is the transversal coordinate,  corresponds a regular function $\psi\in C^\infty(\Real)$ such that on $\Real\times U$, $h$ and $\omega_h$ are as in Theorem \ref{ThFinalLocal}. Given two such charts $U_1$ and $U_2$ with non-trivial intersection, the transverse coordinates are transposed as $$u_2=\frac{au_1+b}{cu_1+d},\quad ad-bc=1$$ and the corresponding functions satisfy
	$$\psi_2=\frac{a\psi_1-b}{-c\psi_1+d}.$$
\end{theorem}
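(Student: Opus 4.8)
The plan is to assemble the global picture from the local structures provided by Theorem \ref{ThFinalLocal} together with the parallel objects from Propositions \ref{prop:distr} and \ref{Propmetrh}. First I would exploit completeness of the unit section $X$ of the parallel space-like line field $\Ll$: since $X$ is $\nabla$-parallel, its flow is a one-parameter group of affine transformations, and because $\Ll^\bot$ is a $\nabla$-parallel (hence integrable) distribution complementary to $X$, the flow of $X$ identifies $M$ with $\Real\times N$, where $N$ is the leaf of $\Ll^\bot$ through a chosen base point and the $\Real$-factor is the flow parameter $t$. (This is the standard de Rham–type splitting argument using a complete parallel vector field.) Along $N$ we still have the $\nabla$-parallel isotropic vector field $p$ of Proposition \ref{prop:distr}, which is tangent to $\Ll^\bot=TN$; its orthogonal complement $p^\bot$ inside $TN$ is preserved by $\nabla$ and is therefore involutive, giving the codimension-one foliation $\mathcal F$ on $N$.

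Next I would identify the transverse geometry of $\mathcal F$. Locally, Theorem \ref{ThFinalLocal} supplies adapted coordinates $t,v,x^1,\dots,x^{n-1},u$ in which $h$ and $\omega_h$ take the stated form with a regular function $\psi\in C^\infty(\Real)$; the leaves of $\mathcal F$ are the level sets $u=\mathrm{const}$, so $u$ is a transverse coordinate, and $\psi$ is the datum attached to the chart. The content of the theorem is then to compute the transition rules on overlaps $U_1\cap U_2$. Here I would invoke Section \ref{sect:coord} (Proposition \ref{psl2}): the coordinate changes preserving the normal form of Theorem \ref{ThFinalLocal} and fixing the metric $h$ are exactly those coming from the $\PSL$-part of the group action — the $\SAff$-part reparametrizes $t$ and the $\mathbb Z_2$-part flips signs, but once $h$ and the orientation data are pinned down globally (via the fixed $X$ and $h$), only the fractional-linear transformations in $u$ survive. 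Reading off that action gives $u_2=(au_1+b)/(cu_1+d)$ with $ad-bc=1$ and the induced change $\psi_2=(a\psi_1-b)/(-c\psi_1+d)$; the sign pattern here (as opposed to the $\mathrm{sgn}(B)$-twisted formula of the equivalence theorem) reflects that $B$ is forced to be positive once $t$ is a globally defined function, so the $\mathbb Z_2$-factor does not act. Since these transition maps are fractional-linear in the transverse coordinate, they define a transversal projective structure on $(N,\mathcal F)$, and the cocycle $\{\psi\}$ transforms consistently under them.

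The main obstacle I expect is not any single computation but the bookkeeping needed to pass from \emph{local} normal forms to a \emph{coherent global} atlas: one must check that the preferred metric $h$ (unique up to homothety by Proposition \ref{Propmetrh}) and the unit field $X$ (unique up to a constant once $h$ is fixed) can be chosen globally on simply connected $M$, that the flow-box coordinates along $X$ can be glued with the leafwise coordinates on the leaves of $p^\bot$, and — crucially — that on every overlap the coordinate change already lies in the subgroup described by Proposition \ref{psl2} rather than the full pseudogroup of diffeomorphisms. The latter point is where fixing $h$ globally does the real work: it eliminates the conformal rescaling freedom and reduces the structure cocycle to $\PSL$, after which the transversal projective structure and the transformation law for $\psi$ follow directly. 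The remaining verification that $u=\mathrm{const}$ really are the leaves of $\mathcal F$ and that $u$ transforms projectively is then a routine unwinding of the formulas in Sections \ref{sec:local} and \ref{sect:coord}.
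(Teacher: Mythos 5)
Your overall strategy is the one the paper follows (global splitting from completeness of $X$ plus simple connectedness, then the parallel isotropic field $p$, the involutive $p^\bot$, foliated charts from Theorem \ref{ThFinalLocal}, and the reduction of the overlap cocycle to the $\PSL$-action on $u$ and $\psi$ via Section \ref{sect:coord}), but the first step contains a genuine error: $X$ is \emph{not} $\nabla$-parallel. Only the line field $\Ll$ is parallel; for the unit section one computes from $\nabla h=-2\omega_h\otimes h$ and $h(X,X)=1$ that $\nabla X=\omega_h\otimes X$, which is nonzero precisely because the structure is non-closed. Hence the flow of $X$ is not a flow of affine transformations, and the ``de Rham--type splitting for a complete parallel vector field'' you invoke is not available. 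Moreover, even with a parallel transverse field, completeness of $X$ plus integrability of the complementary distribution does not by itself yield a global product; one needs an additional global ingredient. The paper supplies it by passing to the dual $1$-form $\eta=h(X,\cdot)$: locally $\eta=dt$ by Theorem \ref{ThFinalLocal}, so $\eta$ is closed, $\eta(X)=1$, and then completeness of $X$ together with simple connectedness of $M$ gives $M\cong\Real\times N$ with $N$ a leaf of $\ker\eta$ (citing \cite[Proposition 8]{LSch}). Equivalently, the fact you would need and do not verify is that $\omega_h$ (hence $\eta$) annihilates $\Ll^\bot$, so that the flow of $X$ preserves the foliation and a global time function $t$ exists; without this your identification $M\cong\Real\times N$ is unjustified.

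The remainder of your argument is essentially the paper's: $p$ exists globally because the holonomy algebra annihilates an isotropic vector and $M$ is simply connected; $p^\bot$ is parallel, hence involutive, giving $\mathcal F$; the charts of Theorem \ref{ThFinalLocal} are foliated with transversal coordinate $u$; and since $h$ is fixed (up to homothety, Proposition \ref{Propmetrh}) and $t$ is the global coordinate on the $\Real$-factor, the overlap transformations preserving the normal form reduce, by Proposition \ref{psl2} and the integrated action \eqref{eq:groupactionD4}, to the fractional-linear transformations in $u$ with the stated induced action on $\psi$, which is exactly the transversal projective structure claimed. Your observation that the $\SAff\times\mathbb Z_2$ part is eliminated because $t$ is globally defined is correct and is the same reasoning the paper relies on when it says the rest follows from Section \ref{sect:coord}. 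So the proposal is salvageable, but the splitting step must be redone along the lines above.
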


A local diffeomorphism $\varphi$ of $M$ with domain $U \subset M$ is a local symmetry of the recurrent Weyl structure $(c, \nabla)$ if it preserves the conformal structure $c$ and the affine connection $\nabla$. In the case that the Weyl structure is defined by a metric $g \in c$ and a 1-form $\omega_g$, the condition on $\varphi$ may be expressed as $\varphi^* g=e^{2f} g$ and $\varphi^* \omega_g = \omega_g-df$ for some function $f \in C^\infty(U)$. If $U=M$, then $\varphi$ is a global symmetry.  A vector field $X$ on $U \subset M$ is called a local infinitesimal symmetry of the recurrent Weyl structure on $M$ defined by $(g,\omega_g)$ if it satisfies $L_X g = 2\lambda g$ and $L_X \omega_g = -d \lambda$ for some function $\lambda \in C^\infty(U)$, and a global infinitesimal symmetry if the same holds for $U=M$.  We will often drop the adjective ``infinitesimal'' when it is clear that the symmetry referred to is a vector field. 

The generic non-closed recurrent Lorentzian Weyl manifold of dimension $n+2\geq 4$ has $(2n-1)+\binom{n-1}{2}$ independent infinitesimal symmetries, and they span a Lie algebra abstractly isomorphic to $\mathfrak{heis}(n-1) \rtimes \mathfrak{so}(n-1)$, but there are some non-closed recurrent Lorentzian Weyl manifolds with additional infinitesimal symmetries. These are studied in Section \ref{sect:symmetric}. We show that there is exactly one class of locally homogeneous spaces and 5 classes of cohomogeneity-one spaces. All other spaces have cohomogeneity two. More precisely, we obtain the following result.


\begin{theorem} Let $(M,c,\nabla)$ be a non-closed recurrent Lorentzian Weyl manifold and  $\dim M=n+2\geq 4$. The following statements hold in the neighborhood of any point. 
	\begin{itemize}
		\item[1.] The infinitesimal symmetries span the tangent space 
		at each point of $M$ if and only if the representative pair $(h,\omega)$ can be given in the local coordinates of Theorem \ref{ThFinalLocal} by the function $\psi(t)=t$. 
		\item[2.] The infinitesimal symmetries span a hyperspace of the tangent space 
		at each point of $M$ if and only if $(h,\omega)$ can be given in the local coordinates of Theorem \ref{ThFinalLocal} by a function $\psi$ taking one of the following forms:
\begin{align*}
 \psi(t) &= e^{t}, \\ 
\psi(t) &= \tan(t), \\
 \psi(t)&=A \ln t, \quad A > 0,  \\
 \psi(t) &= \tan(A \ln t), \quad A>0, \\
   \psi(t) &= |t|^{A}, \quad  At>0, \; A \neq 1.
\end{align*}
		\item[3.] In all other cases, the infinitesimal symmetries span a subspace of codimension 2 of the tangent space 
		at each point of $M$.
	\end{itemize}
\end{theorem}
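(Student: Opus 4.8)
The plan is to combine the local normal form of Theorem~\ref{ThFinalLocal} with the description of the coordinate freedom from Section~\ref{sect:coord}. The first step would be to observe that, by Proposition~\ref{psl2} and the rigidity of that normal form, every local infinitesimal symmetry is — after a coordinate change preserving the form — generated by one of the vector fields of Proposition~\ref{psl2}; these project onto the Lie algebra $\aff\oplus\sl$ of the equivalence group $\SAff\times\PSL$, and the kernel of the projection is the subalgebra $\mathfrak{k}\cong\mathfrak{heis}(n-1)\rtimes\so(n-1)$ spanned by $\partial_v,\partial_{x^1},\dots,\partial_{x^{n-1}}$, the null rotations $u\partial_{x^i}-x^i\partial_v$, and the rotations of the $x^i$. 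Thus the algebra $\mathfrak{g}$ of infinitesimal symmetries near a point fits in an exact sequence $0\to\mathfrak{k}\to\mathfrak{g}\to\mathfrak{h}(\psi)\to 0$, where $\mathfrak{h}(\psi)\subseteq\aff\oplus\sl$ is the isotropy subalgebra of the function $\psi$ for the induced action. The condition that $(\alpha+\beta t)\partial_t+(\gamma+\delta y+\epsilon y^2)\partial_y\in\aff\oplus\sl$ be tangent to the graph $y=\psi(t)$ is the Riccati identity
\[(\alpha+\beta t)\,\psi'(t)=\gamma+\delta\psi(t)+\epsilon\psi(t)^2,\]
so $\mathfrak{h}(\psi)$ is the kernel of the linear map $(\alpha,\beta,\gamma,\delta,\epsilon)\mapsto(\alpha+\beta t)\psi'-\gamma-\delta\psi-\epsilon\psi^2$, whence
\[\dim\mathfrak{h}(\psi)=5-\dim\spa\{1,\psi,\psi^2,\psi',t\psi'\}.\]

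Two elementary observations then reduce the theorem to a classification. First, a nonzero element of $\mathfrak{h}(\psi)$ with vanishing $\aff$-part would force $\gamma+\delta\psi+\epsilon\psi^2\equiv 0$, hence $\psi$ constant, contradicting $\psi'>0$; therefore $\mathfrak{h}(\psi)$ injects into $\aff$ and $\dim\mathfrak{h}(\psi)\in\{0,1,2\}$. Second, since $\mathfrak{k}$ contains $\partial_v$ and all the $\partial_{x^i}$ and each of its elements fixes $t$ and $u$, it spans exactly the $n$-plane $\langle\partial_v,\partial_{x^1},\dots,\partial_{x^{n-1}}\rangle$ at every point; and a nonzero element of $\mathfrak{h}(\psi)$ lifts to a symmetry whose $\partial_t$-component equals its $\aff$-symbol $\alpha+\beta t$, which is not identically zero and, as one checks for each normal form below, does not vanish anywhere on the natural domain. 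When $\dim\mathfrak{h}(\psi)=2$ the two lifts are moreover pointwise independent, because their $\aff$-symbols span $\aff$ and so are non-proportional as functions of $t$, which makes the $2\times 2$ determinant formed from their $\partial_t$- and $\partial_u$-components not identically zero. Consequently the infinitesimal symmetries span a subspace of $T_pM$ of dimension exactly $n+\dim\mathfrak{h}(\psi)$ at every point of the domain, so, since $\dim M=n+2$, the structure has cohomogeneity $2-\dim\mathfrak{h}(\psi)$, and the theorem reduces to deciding when $\dim\mathfrak{h}(\psi)$ equals $2$, $1$, or $0$.

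It therefore remains to classify, up to $\SAff\times\PSL\times\mathbb Z_2$, the functions $\psi$ with $\psi'>0$ admitting at least one nontrivial Riccati relation; for such $\psi$ both the $\aff$-part and the $\sl$-part of the symbol are nonzero. The two factors of the group act independently on the two parts, the $\SAff$-orbits of nonzero elements of $\aff$ being represented by $\partial_t$ and $t\partial_t$, and the $\PSL$-orbits of nonzero elements of $\sl$ by the parabolic, hyperbolic and elliptic normal forms (e.g.\ $y^2\partial_y$, $y\partial_y$, $(1+y^2)\partial_y$). Integrating the six resulting model equations gives $\psi$ equivalent to $t$ (the $\partial_t$/parabolic case, which integrates to a fractional-linear function of $t$), $e^t$, $\tan t$, $A\ln t$, $|t|^A$, or $\tan(A\ln t)$, the exponent, respectively the frequency, being a genuine modulus in the last two families, as one confirms by evaluating the two generating differential invariants of Proposition~\ref{Prop:invariants}. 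Finally, a direct computation of $\spa\{1,\psi,\psi^2,\psi',t\psi'\}$ gives $\dim\mathfrak{h}(\psi)=2$ for $\psi\sim t$ — and a short calculation with the normal form $\psi=t$, exhibiting the two extra symmetries $\partial_t-\partial_u$ and $t\partial_t+u\partial_u+3v\partial_v+2\sum_i x^i\partial_{x^i}$, shows the corresponding structure is transitive on its domain — and $\dim\mathfrak{h}(\psi)=1$ for the remaining five functions, while every other $\psi$ has $\dim\mathfrak{h}(\psi)=0$. Together with the cohomogeneity count this yields the three cases. I expect the main obstacle to be precisely this last step: organising the $\SAff\times\PSL$-action on the five-dimensional symbol space into the $2\times 3$ orbit types, carrying out the six integrations, noticing that the $\partial_t$/parabolic symbol does not produce a genuine cohomogeneity-one structure but degenerates to $\psi\sim t$, and using the differential invariants to rule out that the group collapses the two one-parameter families — the remaining arguments being routine bookkeeping of dimensions and domains.
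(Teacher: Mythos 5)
Your proposal is correct and follows essentially the same route as the paper's Section~\ref{sect:symmetric}: the symmetry algebra splits as $(\mathfrak{heis}(n-1)\rtimes\so(n-1))\rtimes\mathfrak h(\psi)$ with $\mathfrak h(\psi)\subset\aff\oplus\sl$ meeting both summands trivially, the invariance condition is the Riccati relation \eqref{eq:psi}, and the $2\times 3$ normal forms for the $\SAff\times\PSL$-action integrate to exactly the paper's list, with the homogeneous case $\psi\sim t$ arising as the degenerate one (the paper instead treats $\dim\mathfrak h(\psi)=2$ separately via transitivity of $\PSL$ on two-dimensional subalgebras of $\sl$, and leaves your explicit pointwise count $n+\dim\mathfrak h(\psi)$ implicit). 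One small touch-up: in the two-dimensional case the independence of the two lifts should be justified by noting that the relevant $2\times 2$ determinant equals a nonzero multiple of $u+\psi(t)$, hence is nonvanishing on the coordinate domain, rather than merely ``not identically zero''.
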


Denote by $\operatorname{Aut}(M,c,\nabla)$ the  group of global symmetries and by $\operatorname{Aut}^0(M,c,\nabla)$ the connected component of the identity.  In Section \ref{sect:homogeneous} we show that in each dimension  $n+2\geq 4$ there is only one simply connected homogeneous non-closed recurrent Lorentzian Weyl manifold.
Define the Lie group
$$G_0=\Real^+\zr (\Real\oplus\Real^{n-1}\oplus\Real)$$
with the operation
$$(r_1,w_1,y_1,s_1)\cdot (r_2,w_2,y_2,s_2)=(r_1r_2,w_1+r_1^{3}w_2,y_1+r_1^{2}y_2,s_1+r_1s_2).$$
Let $b$ be the left-invariant Lorentzian metric on $G_0$ with the value
$$b_e= \left(4(dr)^2+2dwds +\sum_{i=1}^{n-1}(dy^i)^2+2(ds)^2\right)_e$$
at the identity $e=(1,0,0,0)\in G_0$. Consider the left-invariant 1-form $\omega$
defined by the value $$\omega_e=\left(-\frac{\sqrt{2}}{2}ds+2dr\right)_e.$$ We show that the Weyl connection $\nabla$ defined by $b$ and $\omega$ has recurrent curvature tensor.

\begin{theorem}\label{ThHomog}
	Let $(M,c,\nabla)$ be a non-closed recurrent Lorentzian Weyl structure and  $\dim M=n+2\geq 4$. Suppose that the manifold $M$ is simply connected and $\operatorname{Aut}(M,c,\nabla)$-homogeneous.
	Then $M$ is diffeomorphic to $G_0$. Under this diffeomorphism,
	$c=[b]$, and $\nabla$ is defined by $b$ and $\omega$. Moreover,
	\begin{equation}\label{Aut=}\operatorname{Aut}^0(M,c,\nabla)=\operatorname{Isom}^0(G_0,b)\cong G_0\cdot\SO(n-1)\cdot\Real^{n-1}.\end{equation}
\end{theorem}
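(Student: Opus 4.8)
The plan proceeds in three steps: reduce to an explicit local model using the homogeneity hypothesis, realize that model as the left-invariant structure on $G_0$, and then identify the automorphism group.

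\emph{Step 1: reduction to a flat local model.} Since $M$ is $\operatorname{Aut}(M,c,\nabla)$-homogeneous, the global infinitesimal symmetries, and hence a fortiori the local ones, span the tangent space at every point of $M$; by part~1 of the preceding theorem, each local function $\psi(t)$ of Theorem~\ref{ThFinalLocal} is then equivalent to $\psi(t)=t$. With $\psi(t)=t$, formula \eqref{funG} gives $G=-\ln|u+t|$, so that on a suitable chart, after the $\mathbb Z_2$-flip $(v,u,x^i,t)\mapsto(-v,-u,x^i,t)$ that normalizes the sign of $u+t$,
$$h=(dt)^2+\frac{1}{(u+t)^2}\Big(2\,dv\,du+\sum_{i=1}^{n-1}(dx^i)^2\Big),\qquad \omega_h=\frac{1}{u+t}\,dt,\qquad u+t>0.$$
This pair satisfies \eqref{tildeg}, and by Proposition~\ref{Propmetrh} it is, up to homothety, the only such pair on $M$.

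\emph{Step 2: the left-invariant model.} I next check that $(G_0,b,\omega)$ is a simply connected homogeneous non-closed recurrent Lorentzian Weyl manifold with the same germ. From the group law the $1$-forms $\tfrac{dr}{r},\tfrac{dw}{r^3},\tfrac{dy^i}{r^2},\tfrac{ds}{r}$ are left-invariant and reduce to $dr,dw,dy^i,ds$ at $e$, whence
$$b=\frac{4\,dr^2}{r^2}+\frac{2\,dw\,ds+\sum_i(dy^i)^2}{r^4}+\frac{2\,ds^2}{r^2},\qquad \omega=\frac{2\,dr}{r}-\frac{\sqrt2}{2}\,\frac{ds}{r}.$$
Computing the Weyl connection of $(b,\omega)$ from \eqref{formulaK} and its curvature shows $\nabla R=-3\,\omega\otimes R$ with $d\omega\neq0$; equivalently, one exhibits (after the substitution $s=\sqrt2\,\sigma$) an explicit change of coordinates carrying $(b,\omega)$ to the pair of Step~1. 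Since $G_0$ acts on itself transitively by isometries of $b$ that preserve $\omega$, $(G_0,b,\omega)$ is a simply connected homogeneous non-closed recurrent Lorentzian Weyl manifold, and, being homogeneous, its local function is also equivalent to $t$; hence $M$ and $G_0$ carry locally isomorphic Weyl structures.

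\emph{Step 3: global identification and symmetry group.} Both $M$ and $G_0$ are simply connected and homogeneous under the automorphism group of their Weyl structure, so the local isomorphism of Step~2 globalizes by the standard rigidity of simply connected homogeneous geometric structures: there is a Weyl-isomorphism $M\cong G_0$ taking $c$ to $[b]$ and $\nabla$ to the connection of $(b,\omega)$, which gives the first two assertions. For \eqref{Aut=}, a $b$-isometry preserving $\omega$ is a symmetry of $(c,\nabla)$ with conformal factor $1$, and both the left translations and the isotropy subgroup described below preserve the left-invariant $\omega$, so $\operatorname{Isom}^0(G_0,b)\subseteq\operatorname{Aut}^0(M,c,\nabla)$. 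Conversely, let $\varphi$ be a symmetry of $(c,\nabla)$; applying $\varphi^*$ to $\nabla R=-3\omega_h\otimes R$ and using $\varphi^*\nabla=\nabla$ and $R\neq0$ gives $\varphi^*\omega_h=\omega_h$, so $\varphi^*h$ again satisfies \eqref{tildeg} with the same $\omega_h$ and hence $\varphi^*h=\lambda h$ for a positive constant $\lambda$ by Proposition~\ref{Propmetrh}; since $|\omega_h|^2_h$ is the nonzero constant $1$ (one computes $b_e^{-1}(\omega,\omega)=1$, constant by homogeneity), pulling back forces $\lambda^{-1}=1$, so $\varphi$ is a $b$-isometry and the two connected groups coincide. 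Finally, $\operatorname{Isom}^0(G_0,b)$ contains $G_0$ by left translations, and solving the Killing equations for $b$ shows its stabilizer of $e$ to be $\SO(n-1)$ acting on the $y$-directions together with an abelian $\Real^{n-1}$ of null rotations mixing $w,y^i,s$, giving $\operatorname{Isom}^0(G_0,b)\cong G_0\cdot\SO(n-1)\cdot\Real^{n-1}$; this is consistent with the dimension count $(n+2)+\binom{n-1}{2}+(n-1)$, exactly two more than the algebra $\mathfrak{heis}(n-1)\rtimes\so(n-1)$ of the generic case.

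The delicate part is the last one: computing the full Killing algebra of the left-invariant Lorentzian metric $b$, verifying that no symmetries beyond $G_0\cdot\SO(n-1)\cdot\Real^{n-1}$ occur, and identifying the (almost) semidirect-product structure of $\operatorname{Isom}^0(G_0,b)$. Once both spaces are known to be homogeneous with the same germ, the global identification in Step~3 is routine.
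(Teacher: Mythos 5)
Your Steps 1 and 2 are consistent with the paper (the paper likewise starts from the $\psi(t)=t$ model of Proposition \ref{prop:homogeneous}; it avoids your unperformed verification that the left-invariant pair $(b,\omega)$ on $G_0$ is the same germ by instead \emph{constructing} $G_0$ and the invariant metric directly from the model, integrating the symmetry fields $X,Y$ and pulling back $b=\tfrac{4}{(2t+u^2)^2}g$, so that the identification is built in). The genuine gap is Step 3. The appeal to ``standard rigidity of simply connected homogeneous geometric structures'' is not a valid principle: two simply connected manifolds, each homogeneous under the automorphism group of its structure and locally isomorphic to each other, need not be globally isomorphic --- e.g.\ the null half-plane $\{u>0\}$ in $\Real^{1,1}$ with the flat metric $2\,du\,dv$ is simply connected and isometry-homogeneous and locally isometric to $\Real^{1,1}$, but not isometric to it. Precisely this globalization is the substance of the paper's proof: one embeds the symmetry algebra $\h$ of $M$ into the model algebra $\g$, obtains a homomorphism $\varphi:H\to G$, shows that $\varphi(H)$ acts transitively on \emph{all} of $G_0$, builds the map $f:M\to G_0$, $hx\mapsto\varphi(h)e$, checks it is a well-defined surjective submersion and hence (by simple connectedness of both) a diffeomorphism, and then proves a uniqueness lemma --- using the derived series of the radical of $\g$ --- showing that any $G$-invariant recurrent Weyl structure on $G_0$ coincides with $([b],\nabla)$. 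None of this is ``routine,'' and your proposal supplies no substitute for it.

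The argument you give for $\operatorname{Aut}^0(M,c,\nabla)\subseteq\operatorname{Isom}^0(G_0,b)$ is also incorrect. It conflates the metric $h$ of Proposition \ref{Propmetrh} with $b$: these differ by the non-constant factor $4/(u+t)^2$, so an $h$-homothety statement does not give a $b$-isometry statement. Moreover the key claim that $|\omega_h|^2_h$ is a nonzero constant is false: for the model one has $h^{tt}=1$ and $\omega_h=\tfrac{dt}{u+t}$, so $|\omega_h|^2_h=(u+t)^{-2}$. Indeed your argument, if valid, would show that every automorphism preserves $h$, contradicting the fact that the last symmetry field in Proposition \ref{prop:homogeneous} is a proper homothety of $h$; and the parenthetical computation $b_e^{-1}(\omega,\omega)=1$ concerns $\omega_b$, for which the recurrence identity $\nabla R=-3\omega_h\otimes R$ does not hold with $\omega_b$ in place of $\omega_h$, so $\varphi^*\omega_b=\omega_b$ cannot be extracted that way. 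The paper's route is different and sound: the full symmetry algebra $\g$ annihilates both $b$ and $\omega_b$, and conversely (by the computation recorded after Proposition \ref{prop:homogeneous}) the Killing fields of $b$ are exactly the elements of $\g$; this yields \eqref{Aut=}, with $G_0$ acting simply transitively and the isotropy generated by $\SO(n-1)$ and the null rotations $V_i$. Your deferral of the Killing-field computation for $b$ would in any case have to be carried out to obtain both inclusions.
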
 

Several of the results mentioned so far (concerning local coordinate expressions, symmetries and differential invariants) have analogues for $\dim M=3$, and they can be found in respective sections together with the results for $\dim M \geq 4$.  

A special type of Lorentzian Weyl manifolds that has received a lot of attention (see for example \cite{BFKN, DMT}) are the Einstein-Weyl manifolds, and in Section \ref{sect:EinsteinWeyl} we investigate whether some these manifolds coincide with the non-closed recurrent Weyl manifolds. For $\dim M \geq 4$ it turns out that there are no Einstein-Weyl manifolds among the non-closed recurrent Lorentzian Weyl manifolds. However, there does exist non-closed Lorentzian Einstein-Weyl manifolds of dimension 3 with recurrent curvature tensor. By Proposition \ref{prop:EW} they are exactly the nonclosed recurrent Lorentzian Weyl manifolds with 2-dimensional holonomy algebra, and they can be locally defined by the pair $(g,\omega)$ given by 
\[ g= 2dv du +(dx)^2 +\left (a(u) v x + \frac{1}{12} a^2(u) x^4-\frac{1}{3} \dot a(u) x^3 + c(u) x\right) (du)^2, \qquad \omega= a(u) x du,\]
where the function $a$ is non-vanishing. As shown in Section \ref{sect:symmetric}, the most symmetric of these have a 2-dimensional Lie algebra of infinitesimal symmetries, and they are locally equivalent to the one defined by 
\[ a(u) \equiv 1, \qquad c(u) \equiv 0,\] 
which admits the infinitesimal symmetries
\[\partial_u, \qquad u\partial_u-3v\partial_v-x\partial_x.\]

Note that the non-closed recurrent Lorentzian Weyl structures  have another remarkable property: they admit a rather large number of weighted parallel spinors~\cite{ParSpin}.

\begin{cor}
	\label{cor2}
	Let $(c,\nabla)$ be a non-closed recurrent Lorentzian Weyl spin structure on a simply connected manifold $M$ of dimension $n+2\geq 4$, then it admits a space of weighted parallel spinors of complex dimension $2^{[\frac{n}{2}]}$. 
\end{cor}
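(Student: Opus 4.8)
The plan is to reduce the statement about weighted parallel spinors to the local model provided by Theorem \ref{ThFinalLocal} and then invoke the spinorial results of \cite{ParSpin}. First I would fix the preferred metric $h\in c$ of Proposition \ref{Propmetrh}, so that by the corollary following it the curvature tensor $R$ is weighted parallel with weight $3$; equivalently, $R\otimes l_h^3$ is $\nabla$-parallel. By Proposition \ref{prop:distr} the holonomy algebra of $\nabla$ preserves an isotropic line (spanned by the parallel isotropic field $p$) together with the parallel distributions $\Ll$ and $\Ll^\bot$; hence the holonomy of the Weyl connection is contained in a maximal parabolic subalgebra of $\frakco(1,n+1)$ stabilizing an isotropic line, and in fact — using the explicit local form of Theorem \ref{ThFinalLocal} — it is a specific (abelian or at most solvable) subalgebra of the type analyzed in \cite{ParSpin}.

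Next I would pass to the conformal spin setup: the conformal class $c$ together with a choice of metric $h$ determines a spinor bundle $S$, and the Weyl connection $\nabla$ induces a connection on the bundle $S\otimes \Ll^w$ of weighted spinors for an appropriate weight $w$. A weighted parallel spinor is then a section annihilated by this induced connection, so its existence is governed entirely by the (restricted) holonomy representation of $\nabla$ on the weighted spinor module. The key step is to identify, for the holonomy algebra $\hol$ found in Section \ref{sec}, the dimension of the space of $\hol$-invariants in the spinor module. This is precisely the kind of computation carried out in \cite{ParSpin}: for holonomy contained in the relevant parabolic of $\frakco(1,n+1)$ one shows that the annihilator of the isotropic line $\mathbb R p$ in the (complex) spinor module has dimension $2^{[n/2]}$, and that the remaining nilpotent part of $\hol$ — which is built from $p$ and the curvature data — acts trivially on this subspace. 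Since $M$ is simply connected, restricted holonomy equals holonomy, so every such algebraically parallel spinor integrates to a genuine weighted parallel spinor field on $M$.

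Concretely, the steps are: (1) fix $h$ as in Proposition \ref{Propmetrh} and recall $R$ is weighted parallel; (2) using Proposition \ref{prop:distr} and Theorem \ref{ThFinalLocal}, determine the holonomy algebra $\hol\subset\frakco(1,n+1)$ explicitly, recording that it stabilizes $\mathbb R p$; (3) compute the spinor module of $\frakco(1,n+1)$ and show the subspace annihilated by Clifford multiplication with $p$ has complex dimension $2^{[n/2]}$; (4) verify that all of $\hol$ acts trivially on that subspace, e.g.\ by checking that the nilpotent generators (of the form $p\wedge(\cdot)$ together with the weight factor) act by zero there — this is where \cite{ParSpin} does the work; (5) conclude via the simply connected hypothesis and the holonomy principle for the induced connection on the weighted spinor bundle. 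The main obstacle is step (4): one must be careful that the weight-$3$ normalization of $R$, and hence the precise weight $w$ appearing in the weighted spinor bundle, is chosen so that the curvature of the spinor connection genuinely vanishes on the $2^{[n/2]}$-dimensional subspace; getting this normalization right is exactly the content imported from \cite{ParSpin}, so the corollary follows once that reference is applied with the holonomy algebra identified in Section \ref{sec}.
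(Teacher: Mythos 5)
Your proposal is correct and follows essentially the same route as the paper, which gives no separate argument beyond combining the holonomy algebra $\mathbb R(\id_{\Real^{1,n+1}}+p\wedge q)\ltimes p\wedge\mathbb R^{n-1}$ from Theorem \ref{Thhol} with the weighted-parallel-spinor results of \cite{ParSpin} and the holonomy principle on a simply connected manifold. Your extra details (kernel of Clifford multiplication by $p$ having dimension $2^{[n/2]}$, triviality of the $p\wedge\mathbb R^{n-1}$ action there, and the weight normalization killing the $\id+p\wedge q$ part) are precisely the content imported from \cite{ParSpin}, so nothing essential is missing.
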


A similar result is known for Lorentzian symmetric spaces \cite{L07}: each simply connected  indecomposable Cahen-Wallach spin manifold of dimension $n+2$ admits a space of parallel spinors of complex dimension $2^{[\frac{n}{2}]}$ (Cahen-Wallach spaces are symmetric Lorentzian spaces represented by a special class of pp-waves).

Finally note that the condition \eqref{nablaR} may be generalized in many various ways, see the recent survey~\cite{Sen24}.

\section{The holonomy algebra of  recurrent Lorentzian Weyl spaces}\label{sec}

We start by recalling the classification of the holonomy algebras of Lorentzian Weyl structures~\cite{Andr}.

Let $(M,c,\nabla)$ be a Weyl manifold of Lorentzian signature $(1,n+1)$, $n\geq 1$. Then its holonomy algebra is contained in the conformal Lorentzian algebra $$\co(1,n+1)=\Real\id_{\Real^{1,n+1}}\oplus\so(1,n+1).$$ The Weyl structure is closed if and only if the holonomy algebra is contained in $\so(1,n+1)$ and this case is well-studied \cite{L07}. By that reason we suppose that the Weyl structure is non-closed, and the holonomy algebra is not contained in  $\so(1,n+1)$.

Fix a Witt basis $p,e_1,\dots,e_n,q$ of the Minkowski space $\Real^{1,n+1}$. With respect to that basis the subalgebra of $\so(1,n+1)$ preserving the null line $\Real p$ has the following matrix form:

\begin{equation*}
\frakso (1, n + 1)_{\mathR p} = \left\{
\left. 
\begin{pmatrix}     
a & X^t & 0 \\
0 & A & -X \\
0 & 0 & -a
\end{pmatrix} \right|
\begin{matrix}     
a \in \mathR \\
A \in \frakso (n) \\
X \in \mathR^n
\end{matrix} \right\} .
\end{equation*}	
We identify the Lie algebra $\so(1,n+1)$ with the space of bivectors $\wedge^2\Real^{1,n+1}$ in such a way that
$$(X\wedge Y)Z=(X,Z)Y-(Y,Z)X.$$
Under this identification  the above element of  
$\frakso (1, n + 1)_{\mathR p}$ corresponds to
$$-a p\wedge q + A - p\wedge X.$$
We get the decomposition 
$$\frakso (1, n + 1)_{\mathR p} = (\mathbb{R}p\wedge q\,\oplus\frakso(n))\ltimes\,p\wedge\mathbb{R}^n.$$

The holonomy algebra $\g\subset\co (1, n + 1)_{\mathR p}$ of a non-closed Lorentzian Weyl structure of dimension $n+2$, $n\geq 1$, satisfies one of the following 3 conditions:

\begin{itemize}
	\item[\bf 1.]  $\g\subset\co (1, n + 1)$ is irreducible. In this case $\g=\co (1, n + 1)$.	
	
	\item[\bf 2.]  $\frakg\subset  \frakco (1, n + 1)$  preserves  an orthogonal decomposition  $$\mathR^{1, n + 1} = \mathR^{1, k + 1} \oplus \mathR^{n - k}, \quad -1 \leqslant k \leqslant n - 1.$$ In this case   $\frakg$ is one of the following:
	\begin{itemize}
		\item[$\bullet$] $\mathR \id_{\Real^{1,n+1}}\oplus \frakso (1, k + 1) \oplus \frakso (n - k) $, $ \quad -1 \leqslant k \leqslant n-1$;
		\item[$\bullet$] $ \mathR (\id_{\Real^{1,n+1}}+p\wedge q) \oplus \mathfrak{k} \oplus \frakso (n - k) \ltimes p\wedge \mathR^{k}\subset\frakco (1, n + 1)_{\mathR p}$,  $0\leq k\leq n-1$;
		\item[$\bullet$] $\mathR \id_{\Real^{1,n+1}}\oplus\mathR p\wedge q \oplus \mathfrak{k} \oplus \frakso (n - k) \ltimes p\wedge \mathR^{k}\subset\frakco (1, n + 1)_{\mathR p}$,  $1\leq k\leq n-1$.
	\end{itemize}
	Here $\mathfrak{k} \subset \frakso (k)$ is the holonomy algebra of a Riemannian manifold.
		
	\item[\bf 3.] $\g$ is contained in $\frakco (1, n + 1)_{\mathR p}$, and it does not preserve any proper non-degenerate subspace of $\Real^{1,n+1}$. Such algebras may be divided into 6 types. For the results of the current paper it is enough to know that each such $\g$ contains the ideal $p\wedge \mathR^{n}$.
\end{itemize}  

Note that the algebras satisfying the second condition correspond to conformal product in the sense of \cite{Moroianu1}.

\begin{theorem}\label{Thhol} Let $(M,c,\nabla)$ be a recurrent non-closed Lorentzian Weyl manifold of dimension $n+2\geq 3$, then one of the following conditions holds: 	
	\begin{itemize}
	\item $n=1$ and the holonomy algebra of $\nabla$  is one of the following
	$$\g = \mathR (\id_{\Real^{1,2}} + p\wedge q),\quad \mathR (2 \id_{\Real^{1,2}} + p\wedge q) \ltimes \mathR p\wedge e_1; 
	$$
	\item $n\geq 2$ and the holonomy algebra of $\nabla$ is
	$$\mathR (\id_{\Real^{1,n+1}}+p\wedge q)  \ltimes p\wedge \mathR^{n-1}\subset\frakco (1, n + 1)_{\mathR p}.$$
	\end{itemize}	
\end{theorem}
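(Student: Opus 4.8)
The strategy is to combine the recurrence condition $\nabla R = \theta \otimes R$ with the classification of Lorentzian Weyl holonomy algebras recalled above, ruling out all cases except the ones listed. The starting observation is that since $\nabla$ is non-closed, $\g$ is not contained in $\so(1,n+1)$, so we are in one of cases \textbf{1}, \textbf{2}, or \textbf{3}. Recurrence forces the line $\Real R_x \subset \wedge^2 T^*_xM \otimes \g$ spanned by the curvature to be $\g$-invariant (since $\nabla_Y R = \theta(Y) R$ means parallel transport preserves $\Real R$ up to scalar), and it also forces the image $\Im R_x := \spa\{R_x(X,Y) : X,Y \in T_xM\} \subseteq \g$ to be a $\g$-submodule of $\g$ which, because the curvature line is one-dimensional and invariant, is an \emph{abelian ideal} of $\g$: indeed $[\g, \Im R_x] \subseteq \Im(\nabla R)$-type computations, together with the Bianchi identity, show $\Im R_x$ is abelian and normalized by $\g$. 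This gives a strong structural constraint: $\g$ must possess a nontrivial abelian ideal containing the image of its curvature.

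The second step is the case analysis. For case \textbf{1}, $\g = \co(1,n+1)$ is reductive with one-dimensional center $\Real \id$, and for $n \geq 2$ its semisimple part $\so(1,n+1)$ is simple, so the only abelian ideal is $\Real \id$; but then $\Im R_x \subseteq \Real \id$ is impossible for a torsion-free connection whose curvature satisfies the first Bianchi identity (the identity component in $\Real\id$ contributes a symmetric piece incompatible with Bianchi), so case \textbf{1} is excluded — this needs the small-$n$ check $n=1$ separately, where $\co(1,2)$ is low-dimensional and one verifies directly that recurrence forces the holonomy into one of the two listed $1$- and $2$-dimensional subalgebras. For case \textbf{2}, the Riemannian factor $\mathfrak k \subseteq \so(k)$ and the $\so(n-k)$ factor are semisimple-plus-center, and one argues that the only way to accommodate an abelian ideal carrying curvature is to have $\mathfrak k$ and the orthogonal factor trivial and $k$ maximal, which collapses the three sub-bullets of case \textbf{2} into (at most) the third case or forces closedness; more precisely, one shows the $\Real\id$ or $\Real(\id+p\wedge q)$ summand cannot carry curvature alone, and any $\so$-summand of dimension $\geq 1$ fails the invariance of $\Real R_x$ unless it acts trivially on the curvature. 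This pushes everything into case \textbf{3}.

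The third step is the heart: within case \textbf{3}, $\g \subseteq \co(1,n+1)_{\Real p}$ contains the ideal $p \wedge \Real^n$, and one must pin down exactly which such $\g$ admit a recurrent (rather than merely parallel-up-to-scale in a trivial way) curvature with a one-dimensional invariant curvature line. Here I would write the general curvature tensor of a torsion-free connection with holonomy in $\co(1,n+1)_{\Real p}$ using the Witt basis, impose the first Bianchi identity, and then impose that $\nabla R$ be proportional to $R$. The key will be that the $\co$-part (the $\id$ and $p\wedge q$ directions, i.e.\ the genuinely non-closed, non-$\so$ directions) must appear in $R$ in a way that is compatible with $\Real R_x$ being invariant: this should force the holonomy's "conformal" part to be precisely $\Real(\id + p\wedge q)$ (not $\Real\id \oplus \Real p\wedge q$, which is too big — its curvature line would not be invariant), and force the translational part $p\wedge\Real^n$ to be cut down to $p\wedge\Real^{n-1}$ (one isotropic-orthogonal direction $e_n$ must drop out, which is exactly the vector field $p$-direction phenomenon of Proposition \ref{prop:distr} — the parallel space-like line $\Ll$ eats one $e_i$). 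Also one checks no $\so(n-1)$ or smaller rotational part survives, since a nontrivial semisimple rotation would again break invariance of $\Real R_x$. Assembling these constraints yields exactly $\g = \Real(\id_{\Real^{1,n+1}} + p\wedge q) \ltimes p\wedge\Real^{n-1}$ for $n \geq 2$, and the two exceptional low-dimensional algebras for $n=1$.

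\textbf{Main obstacle.} The delicate part is the case \textbf{3} analysis: one has to rule out all six sub-types of case \textbf{3} except one, and for each show that either the invariance of the one-dimensional curvature line fails or the curvature is forced to vanish (making the structure flat, hence closed) or the holonomy is forced down to the claimed algebra. This requires a careful bookkeeping of how $\g$ acts on the space of algebraic curvature tensors with values in $\g$, and isolating the component on which the Bianchi-constrained curvature must live; the scalar-weight-$3$ bookkeeping from Proposition \ref{Propmetrh} (equivalently, that $R$ is weighted parallel with weight $3$) is what fixes the precise coefficient $\id + p\wedge q$ rather than any other combination $\id + \lambda\, p\wedge q$. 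I expect the small-$n$ cases ($n=1$, possibly $n=2,3$) to need separate hands-on verification since several of the generic module-theoretic arguments degenerate there.
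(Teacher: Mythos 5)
Your high-level frame (Ambrose--Singer, invariance of the curvature line $\Real R_x$ under $\g$, then a case analysis over the classification of non-closed Lorentzian Weyl holonomies) is the same as the paper's, but two of your key structural claims are false, and they carry the weight of your argument. First, the ``abelian ideal'' constraint does not hold: for a recurrent curvature tensor the Ambrose--Singer theorem gives $\g=\spa\{R_x(X,Y)\mid X,Y\in T_xM\}$ exactly (this is \eqref{sv1} in the paper), so $\Im R_x$ is all of $\g$; and invariance of the line only gives $[\xi,R_x(X,Y)]=\rho(\xi)R_x(X,Y)+R_x(\xi X,Y)+R_x(X,\xi Y)$, which makes $\Im R_x$ an ideal but in no way abelian. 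Indeed the answer $\Real(\id_{\Real^{1,n+1}}+p\wedge q)\ltimes p\wedge\Real^{n-1}$ is not abelian, so an argument excluding Cases 1 and 2 on the grounds that ``the curvature must land in an abelian ideal'' proves too much and would also exclude the correct holonomy algebra.

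Second, your routing of the case analysis is wrong. You claim Case 2 collapses into Case 3 and that the heart of the matter is to extract $\Real(\id+p\wedge q)\ltimes p\wedge\Real^{n-1}$ inside Case 3 by ``cutting down'' $p\wedge\Real^n$ to $p\wedge\Real^{n-1}$. But every Case 3 (weakly irreducible) algebra contains the full ideal $p\wedge\Real^n$, so no such reduction is available there, while the target algebra annihilates (hence preserves) the non-degenerate line $\Real e_n$ and is therefore a Case 2 algebra (second bullet, $k=n-1$, $\mathfrak{k}=0$). In the paper, Case 3 is eliminated for $n\geq 2$ by an explicit computation in ${\mathcal R}(\g)$: the Bianchi identity together with $\rho|_{p\wedge\Real^n}=0$ forces $R$ into the form \eqref{algReq2}, which places a rotation $e_i\wedge e_1$ in $\g$, and that rotation moves the line $\Real R_x$ --- a contradiction; the surviving algebra, with curvature \eqref{R13eq1}, is produced within Case 2 (and Case 1 is handled via the decomposition ${\mathcal R}(\co(1,n+1))\cong{\mathcal R}(\so(1,n+1))\oplus\wedge^2\Real^{1,n+1}$, not via a Bianchi heuristic on the $\Real\id$ part alone). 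Finally, you cannot invoke Proposition \ref{Propmetrh} (weight-$3$ parallelism) to fix the coefficient in $\id+p\wedge q$: that result is derived downstream of Theorem \ref{Thhol} (through Theorems \ref{MainTh} and \ref{ThFinalLocal}), so the appeal is circular; in the paper the coefficient drops out of the explicit description of ${\mathcal R}(\g)$ together with \eqref{sv1} and \eqref{sv2}. So, while the strategy is pointed in the right direction, the proposal as written has genuine gaps in each of the three cases.
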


{\bf Proof of Theorem \ref{Thhol}.} Let $(M,c,\nabla)$ be a recurrent non-closed Lorentzian Weyl manifold and let $\g\subset\co(1,n+1)$ be its holonomy algebra at a point $x\in M$. Let us denote the tangent space $T_xM$ by $\Real^{1,n+1}$. Suppose that $R_x\neq 0$.
Since $R$ is recurrent, from the Ambrose-Singer Theorem it follows that
\begin{equation}
	\label{sv1}
	\g={\rm span}\{R_x(X,Y)|X,Y\in \Real^{1,n+1}\}.
\end{equation}
Consider the space of algebraic tensors of type $\g$:
\begin{align*}
{\mathcal R}(\g)=\bigl \{ R \in \operatorname{Hom} (\wedge^2 \Real^{1,n+1},\mathfrak{g}) \mid{}
&R(X,Y)Z+R(Y,Z)X+R(Z,X)Y=0\quad \forall X,Y,Z\in \Real^{1,n+1} \bigr \}.
\end{align*}
From the Bianchi identity it follows that $R_x\in {\mathcal R}(\g)$. 
The holonomy algebra $\g$ acts on the space  ${\mathcal R}(\g)$ in the following natural way
\begin{equation}
\label{eqxiR}
\xi:R\mapsto \xi\cdot R,\quad (\xi\cdot R)(X,Y)= [ \xi,  R(X,Y)] - R(\xi X,Y) - R(X, \xi Y).
\end{equation}
Since the curvature tensor $R$ is recurrent, by the holonomy principle, $\g$ preserves the line $\Real R_x$,
i.e., there exists a 1-form $\rho$ on $\g$ such that
\begin{equation}
	\label{sv2}
	\xi\cdot R_x=\rho(\xi)R_x\quad\forall\, \xi\in\g.
\end{equation}
Now we are going to describe the holonomy algebras $\g$ of non-closed Weyl connections and elements $R\in {\mathcal R}(\g)$ satisfying the properties \eqref{sv1} and \eqref{sv2}. 
Consider the 3 possibilities for $\g$ from above.

{\bf Case 1.} Suppose that $\g=\co (1, n + 1)_{\Real p}$.
There exists an isomorphism of $\so (1, n + 1)$-modules 
$${\mathcal R}(\co (1, n + 1))\cong {\mathcal R}(\so (1, n + 1))\oplus\, \wedge^2\Real^{1, n + 1},$$
where an element $A \in \wedge^2\Real^{1, n + 1}$ defines an algebraic curvature tensor $R_A$ by the equality
\begin{equation}
\label{eqRA}
R_A (X, Y) = AX \wedge Y + X \wedge A Y + 2(AX, Y) \id_{\Real^{1,n+1}}.
\end{equation}

Let $R\in{\mathcal R}(\co (1, n + 1)).$ Property \eqref{sv2} implies that $R\in {\mathcal R}(\so (1, n + 1))$. Such $R$ does not satisfy property \eqref{sv1}.
 
{\bf Case 3.} Suppose that $\g\subset\frakco (1, n + 1)_{\mathR p}$. Then $\g$ contains the ideal $p\wedge \Real^n$. First suppose that $n\geq 2$,
Let $\xi = p\wedge Z$, $Z \in \Real^n$. Let 
$R = R_0 + R_A$, where $R_0 \in {\mathcal R}(\so (1, n + 1))$ and an element $A \in \wedge^2\Real^{1, n + 1}$ defines $R_A$ by~\eqref{eqRA}. The element $A$ may be rewritten in the form
$$A=a p\wedge q+p\wedge X+q\wedge Y+B,\quad a\in\Real,\,\,X,Y\in\Real^n,\,\, B\in\so(n).$$
The equality~\eqref{sv2} implies
$$[\xi, A] = \rho(\xi) A,$$
or, equivalently,
$$aZ-BZ=\rho(\xi)X,\quad Z\wedge Y=\rho(\xi)B,\quad \rho(\xi)a=0,\quad \rho(\xi)b=0.$$
If $\rho(\xi) \neq 0$ for some $\xi\in p\wedge\Real^n$, then $A = 0$, and  $R$ does not satisfy property~\eqref{sv1}. This implies that $\rho(\xi) = 0$ for all $\xi\in p\wedge\Real^n$. Consequently, $a=0$, $B=0$, $Y=0$, i.e., $A=p\wedge X$. Combining this with the description of the element $R\in\frakco (1, n + 1)_{\mathR p}$ given in \cite{Andr}, we conclude that 
 $R$ is determined by the equalities
\begin{align*}
&R(p, q) = -\lambda p\wedge q- p \wedge X_0, \quad R(p, V) = 0,\\
&R(U, V) = - p \wedge \big( P(V) U - P(U) V -2g(V,X) U +2 g(U,X) V \big) , \\
&R(U, q) =  \gamma (U) \id  - g(U, 2X+X_0)  p \wedge q + P(U) - p \wedge K(U), 
\end{align*}
where
$$ 
\lambda \in \mathbb{R}, \quad
X_0,  \in \mathbb{R}^n, \quad
P \in \Hom(\mathbb{R}^n, \so(n)), \quad
K \in \odot^2 \mathbb{R}^n, \quad
S  \in {\mathcal R} (\mathfrak{so} (n))
$$
are fixed, and $U,V\in\Real^n$ are arbitrary. 
The condition $p\wedge\Real^n\,\cdot R=0$ implies  $\lambda = 0$, $S = 0$, and 
$$2 P(V) U - P(U) V  +2g(U,X)V+ g(V, X_0) U + (U, V) X_0=0 \quad \forall \, U, V \in \Real^n.$$ Using this equality and the similar equality, where $U$ is interchanged with $V$, we get
$$3P(V)U+g(U,4X+X_0)V+2g(V,X+X_0)V+3g(U,V)X_0=0 \quad \forall \, U, V \in \Real^n.$$ The condition $P(V)\in\so(n)$ implies $g(P(V)V,W)=-g(P(V)W,V)$ for all $V,W\in\Real^n$. Using this, we get $X=-X_0$, and $P(V)=-V\wedge X_0$. Thus it holds 
\begin{align}
\label{algReq2}
\begin{split}
&R(p, q) = - p \wedge X_0, \quad R(p, V) = 0,\\
&R(U, V) = - p \wedge \big( 3 (U \wedge V) X_0 \big) , \\
&R(U, q) =  -2 g(U, X_0) \id - 3 g(U, X_0) p \wedge q - U\wedge X_0 - p \wedge K(U).
\end{split}
\end{align}
The condition \eqref{sv1} implies $X_0\neq 0$. We may suppose that $X_0=e_1$. Since 
$$R(p,q) = - p \wedge e_1, \quad R(e_i,e_1) = 3 p \wedge e_i, \quad R(e_i,q) = - e_i \wedge e_1 - p \wedge K(e_i), \quad i > 1,$$
it holds  $- e_i \wedge e_1 \in \g$. From \eqref{eqxiR} and~\eqref{sv2} for $\xi = - e_i \wedge e_1$, $X = p$, $Y = q$ it follows that
$$(\xi\cdot R)(p,q) = - p \wedge e_i = - \rho (\xi)  p \wedge e_1,$$
which gives a contradiction, i.e., it is impossible that $\g$ is contained in $\frakco (1, n + 1)_{\mathR p}$ and $n\geq 2$.

\medskip

Now suppose that $n = 1$. Results of~\cite{Andr} show that $\frakg$ is  one of the following:
\begin{itemize}
	\item[\bf a.] $\mathR \id_{\Real^{1,2}} \oplus \mathR p\wedge q \ltimes  \mathR p\wedge e_1$;
	
	\item[\bf b.] $\mathR \id_{\Real^{1,2}} \ltimes  \mathR p\wedge e_1$;
	
	\item[\bf c.] $\mathR (\alpha \id_{\Real^{1,2}} + p\wedge q) \ltimes \mathR p\wedge e_1$, $\ \alpha \in \mathR$.
\end{itemize}  

Let $\xi = p\wedge e_1$. If $\rho(p\wedge e_1) \neq 0$, then it is easy to show that the Weyl structure is closed. If $\rho(p\wedge e_1) = 0$, then as above we obtain  the following equalities for $R$:
\begin{equation}
	\label{algReq1}
	R(p,q) = - \beta p\wedge e_1, \quad R(p,e_1) = 0, \quad R(e_1, q) = \beta (2 \id_{\Real^{1,2}} + p\wedge q) - k p\wedge e_1, \quad \beta, k \in \Real.
\end{equation}
This and the property~\eqref{sv1} imply  that $\g$ is the algebra from point c. Moreover, it holds $\alpha = 2$, $\rho(2\id + p\wedge q) = -5$, and $k = 0$.
 
{\bf Case 2.} First consider the holonomy algebra $\g=\mathR \id_{\Real^{1,n+1}}\oplus \frakso (1, k + 1) \oplus \frakso (n - k) $, $-1 \leqslant k \leqslant n-1$. In \cite{Andr}, it is shown that 
$${\mathcal R}(\g)\cong {\mathcal R}(\frakso (1, k + 1)) \oplus {\mathcal R}(\frakso (n - k))\oplus \, \mathR^{1, k + 1} \otimes \mathR^{n - k}.$$
There exists an invariant line in the $\frakso (1, k + 1) \oplus \frakso (n - k)$-module $\mathR^{1, k + 1} \otimes \mathR^{n - k}$
if and only if $1\leq k+2\leq 2$ and $n-k=1$.
Hence, for $n\geq 2$ there is no $R\in{\mathcal R}(\g)$ satisfying \eqref{sv1} and \eqref{sv2}. 
The only possible situation is $k=0$, $n=1$ and
$$\g = \mathR \id_{\Real^{1,2}}\oplus \frakso (1, 1) \oplus \frakso (1) \cong \mathR \id_{\Real^{1,2}} \oplus \mathR p\wedge q \subset\frakco (1, 2)_{\mathR p}.$$
According to~\cite{Andr}, $R$ is uniquely determined by the following equalities:
$$
R(p, q) = - \lambda p \wedge q, \quad R(p, e_1) = 0,\quad
R(e_1, q) = \gamma (\id_{\Real^{1,2}} + p \wedge q), \quad
\lambda,\gamma \in \Real.$$
From~\eqref{sv2} for $\xi = a\id_{\Real^{1,2}}$, $a \in \Real$ it follows that $\rho(a\id_{\Real^{1,2}}) = -2a$. Next, we use~\eqref{eqxiR} and~\eqref{sv2} for $\xi = b p \wedge q$, $b \in \Real$. Substituting $X = e_1$, $Y = q$, we obtain $\rho(b p \wedge q) = -b$.
Also, for $X = p$, $Y = q$ we have $\rho(b p \wedge q) R(p,q) = 0$ for all $b \in \Real$; hence $\lambda = 0$ and $R$ does not satisfy property~\eqref{sv1} ($\g$ does not contain $\Real p \wedge q$).

Suppose that 
$$\g=\mathR (\id_{\Real^{1,n+1}}+p\wedge q) \oplus \mathfrak{k} \oplus \frakso (n - k) \ltimes p\wedge \mathR^{k}\subset\frakco (1, n + 1)_{\mathR p}, \quad 0\leq k\leq n-1.$$
According to \cite{Andr}, each $R\in {\mathcal R}(\g)$ may be written in the form
$R = R_1 + R_2 + R_3$, where $R_1 \in {\mathcal R}(\mathfrak{k} \ltimes p\wedge \mathR^{k}),$ $R_2 \in {\mathcal R}(\frakso (n - k))$, and after a proper choice of the basis it holds
\begin{align}
\label{R3eq1}
\begin{split}
&R_3 (e_i, e_{k+1}) = a p \wedge e_i, \quad 1 \leqslant i \leqslant k,\\
&R_3 (e_{k+1}, q) = -a (\id_{\mathbb{R}^{1,n+1}} + p \wedge q),\\
&R_3 (e_j, q) = - a e_{k+1} \wedge e_j, \quad k + 2 \leqslant j \leqslant n
\end{split}
\end{align}
for some $ a \in \mathbb{R}$.
If $n - k \geqslant 2$ and $j > k + 1$ then $e_{k+1} \wedge e_j \in \mathfrak{g}$ and from~\eqref{sv2} we get
\begin{align*}
((e_{k+1} \wedge e_j) \cdot R_3) (e_j, q) = -a [e_{k+1} \wedge e_j, e_{k+1} \wedge e_j] + &R_3 (e_{k+1}, q) =\\
= &R_3 (e_{k+1}, q) = \rho (e_{k+1} \wedge e_j) R_3 (e_j, q). 
\end{align*}
From the last equation it follows that $a = 0$, $R_3 = 0$ and $R$ does not satisfy property~\eqref{sv1}. We conclude that $n-k = 1$ and
$$\g=\mathR (\id_{\Real^{1,n+1}}+p\wedge q) \oplus \mathfrak{k} \ltimes p\wedge \mathR^{n-1}.$$
Using arguments as in Case 1, it is not hard to show that $\mathfrak{k}=0$, and it holds $R=R_3$, where the non-zero values of $R_3$ are the following:
\begin{equation}
\label{R13eq1}
R_3 (U, e_n) = a p \wedge U,\quad 
R_3 (e_n, q) = -a (\id_{\mathbb{R}^{1,n+1}} + p \wedge q)
\end{equation}
for all $ U \in \mathR^{n-1}$.
 
Finally applying the arguments we used just above, it is easy to check that the holonomy algebra 
$$\g=\mathR \id_{\Real^{1,n+1}}\oplus\mathR p\wedge q \oplus \mathfrak{k} \oplus \frakso (n - k) \ltimes p\wedge \mathR^{k}\subset\frakco (1, n + 1)_{\mathR p}, \quad 1\leq k\leq n-1$$
does not satisfy the required properties. This proves the theorem. \qed

\section{The local form of recurrent Weyl structures}\label{sec:local}

In what follows the dot over the function denotes the partial derivative in the direction of the coordinate $u$.

\begin{theorem}
	\label{MainTh}
	Let $(M,c,\nabla)$ be a  non-closed Lorentzian Weyl manifold of dimension $n+2\geq 4$ with recurrent curvature tensor. Then  around each point of $M$ there exist coordinates $v,x^1,\dots,x^n,u$ such that the class $c$ is locally represented by the metric
	\begin{equation}\label{metricag}g=2dvdu+\sum_{i=1}^{n-1}(dx^i)^2+e^{-2F}(dx^n)^2+a(u)\sum_{i=1}^{n-1} (x^i)^2 (du)^2,\end{equation} 
	the corresponding 1-form $\omega$ is given by	
	$$\omega=\dot Fdu,$$ where $F=F(x^n,u)$ is a function such that $\dot F$ satisfies the Riccati  
	\begin{equation}\label{eqmain}\ddot F-\dot F^2=-a(u)\end{equation} and such that $\partial_n\dot F$ is non-vanishing.
\end{theorem}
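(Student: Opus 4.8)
The statement is local, so the plan is to pass to a simply connected neighbourhood of the given point and assume $R\neq 0$ there (if $R$ vanished near the point, $\nabla$ would be flat, hence closed, contrary to hypothesis), and then to exploit the holonomy classification of Section~\ref{sec}. By Theorem~\ref{Thhol}, for $n\geq 2$ the holonomy algebra of $\nabla$ is $\g=\Real(\id_{\Real^{1,n+1}}+p\wedge q)\ltimes p\wedge\Real^{n-1}\subset\frakco(1,n+1)_{\Real p}$, and, as established inside the proof of that theorem, the curvature tensor has at each point the normal form \eqref{R13eq1} (with $a$ now a function on $M$). Correspondingly (Proposition~\ref{prop:distr} and the structure of $\g$) the connection $\nabla$ admits a parallel isotropic vector field $p$, a parallel space-like line field $\Ll$ with parallel orthogonal complement $\Ll^\bot\supset\Real p$, and a parallel degenerate screen $\mathcal S=\Ll^\bot\cap p^\bot\supset\Real p$ on which the induced connection on $\mathcal S/\Real p$ is flat. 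I would proceed in three steps: build coordinates adapted to this parallel data, read the normal form of $g$ off the parallelism conditions, and extract the Riccati equation from recurrence of $R$.

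\emph{Step 1 (adapted coordinates).} From $\nabla p=0$ and $\nabla g=-2\omega\otimes g$ the $g$-dual $1$-form $p^\flat$ satisfies $\nabla_X p^\flat=-2\omega(X)p^\flat$, so $dp^\flat=-2\,\omega\wedge p^\flat$ by torsion-freeness; in particular $p^\flat\wedge dp^\flat=0$ and Frobenius yields a function $u$ with $p^\flat=\mu\,du$, $\mu$ non-vanishing. Replacing the representative of $c$ by $\mu^{-1}$ times it and correcting $\omega$ accordingly, I may assume $g(p,\cdot)=du$. The leaves $\{u=\mathrm{const}\}$ are then the integral manifolds of $p^\bot$; since $p$ is a null $\nabla$-geodesic field tangent to them, $u$ completes to coordinates with $p=\partial_v$, and integrating $\Ll$ and the flat screen $\mathcal S/\Real p$ supplies coordinates $x^n$ and $x^1,\dots,x^{n-1}$ (the latter chosen tangent to $\mathcal S$, which is orthogonal to $\Ll$). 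The condition $g(\partial_v,\cdot)=du$ then forces
\[
g=2\,dv\,du+q_{ab}(v,x,u)\,dx^a dx^b+2B_a(v,x,u)\,dx^a du+H(v,x,u)\,(du)^2,\qquad a,b=1,\dots,n,
\]
and $q_{an}=0$ for $a<n$ by the choice of the $x^i$. This step is where the description of Weyl connections carrying a parallel null direction from \cite{ParSpin} enters; alternatively it can be carried out directly as sketched.

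\emph{Step 2 (normal form).} Imposing $\nabla\partial_v=0$ shows $q_{ab},B_a,H$ are $v$-independent, a shift of the $x^a$ kills the $B_a$, and $\omega=\omega_u(x,u)\,du$. Imposing that the line field $\Ll=\spa(\partial_n)$ be $\nabla$-parallel, testing on each coordinate direction, gives $q_{nn}=e^{-2F}$, $\partial_n H=0$, and, from the component of $\nabla_{\partial_n}\partial_n$ transverse to $\partial_n$, the identities $\partial_i F=0$ $(i<n)$ together with the key relation $\omega_u=\dot F$; hence $F=F(x^n,u)$ and $H=H(x^1,\dots,x^{n-1},u)$. Triviality of the screen holonomy normalises $q_{ij}=\delta_{ij}$ $(i,j<n)$. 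Thus
\[
g=2\,dv\,du+\sum_{i=1}^{n-1}(dx^i)^2+e^{-2F}(dx^n)^2+H\,(du)^2,\qquad \omega=\dot F\,du.
\]

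\emph{Step 3 (recurrence $\Rightarrow$ Riccati).} Writing $\nabla=\nabla^g+K^g$, one computes $K^g_{\partial_i}=\dot F\,p\wedge e_i$ $(i<n)$, $K^g_{\partial_n}=\dot F\,p\wedge\partial_n$, $K^g_{\partial_u}=\dot F(\id+p\wedge q)$, $K^g_{\partial_v}=0$, and then $R^\nabla(X,Y)=R^{\nabla^g}(X,Y)+(\nabla^g_X K^g)_Y-(\nabla^g_Y K^g)_X+[K^g_X,K^g_Y]$. By Theorem~\ref{Thhol} this must coincide with \eqref{R13eq1}; comparing the $p\wedge e_j$-components of $R^\nabla(\partial_i,\partial_u)=0$ for $i,j<n$ gives $\partial_i\partial_j H=2(\dot F^2-\ddot F)\delta_{ij}$. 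Since $H$ does not depend on $x^n$, the right-hand side is a function of $u$ alone, call it $a(u)$ — this is \eqref{eqmain} — and integrating yields $H=a(u)\sum_{i<n}(x^i)^2$ up to a term affine in the $x^i$, removable by a final change of $v$ and the $x^i$. The remaining components of \eqref{R13eq1} then match automatically, with curvature coefficient $a(u)$; and $d\omega=(\partial_n\dot F)\,dx^n\wedge du$, so the structure is non-closed precisely when $\partial_n\dot F\neq 0$. I expect the curvature computation of Step~3, and the mutual consistency of the coordinate normalisations of Steps~1--2, to be the bulk of the work; the passage from recurrence to the Riccati equation itself is then short.
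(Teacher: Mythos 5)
Your overall route is the paper's: Theorem~\ref{Thhol} identifies the holonomy algebra, the coordinates of \cite{ParSpin} give the ansatz $g=2\,dv\,du+\sum_{i<n}(dx^i)^2+e^{-2F}(dx^n)^2+H\,(du)^2$, $\omega=\dot F\,du$ with $F=F(x^n,u)$, $H=H(x^1,\dots,x^{n-1},u)$, and forcing the curvature to agree with the algebraic normal form \eqref{R13eq1} yields $\partial_i\partial_jH=2(\dot F^2-\ddot F)\delta_{ij}$, hence $H$ quadratic in the $x^i$ (affine terms removable) and the Riccati equation \eqref{eqmain}. Computing $R$ through $\nabla=\nabla^g+K^g$ rather than through the Christoffel symbols, and sketching how the \cite{ParSpin} coordinates could be rebuilt from the parallel data, are only cosmetic differences.

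There is, however, a genuine gap at the last claim of the theorem, the non-vanishing of $\partial_n\dot F$. You write that ``the structure is non-closed precisely when $\partial_n\dot F\neq 0$'', but non-closedness is the global hypothesis $d\omega\not\equiv 0$ on the connected manifold $M$, whereas the theorem asserts that $\partial_n\dot F$ is nowhere zero on a chart around an \emph{arbitrary} point; nothing in your argument excludes that $d\omega$ (equivalently $\partial_n\dot F$, equivalently $R$) vanishes at the chosen point or on part of the chart. Your opening parenthetical does not repair this: if $R$ vanished near the point, $\nabla$ would be flat only \emph{there}, giving $d\omega=0$ locally, which contradicts nothing, since non-closedness only forbids $d\omega\equiv 0$ on all of $M$. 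A propagation argument is needed. The paper extracts it from the recurrence itself: on the open set where $\partial_n\dot F\neq0$ the relation $\nabla R=\theta\otimes R$ forces $\theta=\bigl(\partial_nF+\partial_n\ln|\partial_n\dot F|\bigr)dx^n$, and smoothness of $\theta$ shows this set is the whole chart; hence on each chart $d\omega$ is either identically zero or nowhere zero, and connectedness of $M$ plus non-closedness rules out the first alternative everywhere. Alternatively, note that $\nabla R=\theta\otimes R$ makes the zero set of $R$ open and closed (a linear ODE along curves), so on a connected non-closed manifold $R$ is nowhere zero, and since in your normal form every curvature component is a multiple of $\partial_n\dot F$, that function is nowhere zero. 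One of these arguments must be supplied; as written, the proposal does not prove the non-vanishing statement.
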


It is remarkable that the Riccati equation is also related to the Einstein-Weyl equation in dimension~2~\cite{Cal1,Cal2}. Similarly, the Einstein-Weyl equation in Lorentzian signature and dimension 3 is equivalent to the dKP equation, and that has led to many interesting results~\cite{BFKN,DP,DGS,DMT}.

{\bf Proof of Theorem \ref{MainTh}.}
Let $(M,c,\nabla)$ be a non-closed Lorentzian Weyl manifold of dimension $n+2\geq 4$ with recurrent curvature tensor.  By~Theorem~\ref{Thhol} its holonomy algebra coincides with
$$\mathR (\id_{\Real^{1,n+1}}+p\wedge q)  \ltimes p\wedge \mathR^{n-1}\subset\frakco (1, n + 1)_{\mathR p}.$$
From~\cite[Theorem 8]{ParSpin} it follows that around each point of $M$ there exists a coordinate neighborhood $U$ with  coordinates $v,x^1,\dots , x^n, u$ and a~metric $g \in c$ such that
$$g=2dvdu+h+H(du)^2,$$
 $$h=\sum_{i,j=1}^{n-1}\delta_{ij}dx^idx^j+e^{-2F}(dx^n)^2,$$
$$\omega =fdu,\quad f = \dot{F},$$
where $H = H(x^1,\dots , x^{n-1},u)$ and $F = F(x^n,u)$ are  functions. 
The Christoffel symbols of the connection $\nabla$ are the following:
\begin{align*}
\label{GammaEq}
\Gamma_v &= 0,\\
\Gamma_i &= \begin{pmatrix} 
0 & -(\delta_{ik} \dot{F})^{n}_{k = 1} & \frac{1}{2} \partial_i H \\
0 & 0 & ((\delta_{ik} \dot{F})^{n}_{k = 1})^{t} \\
0 & 0 & 0 \end{pmatrix}, \quad i = 1,\ldots, n - 1 \\
\Gamma_n &= (- \delta_{bn} \delta_{cn} \partial_n F)_{b,c = v,1,\ldots , n, u}, \\
\Gamma_u &= \begin{pmatrix}
0 & (\frac{1}{2} \partial_k H)^{n-1}_{k = 1} & 0 & 0\\
0 & \dot{F} E_{n-1} & 0 & -((\frac{1}{2} \partial_k H)^{n-1}_{k = 1})^{t}\\
0 & 0 & 0 & 0\\
0 & 0 & 0 & 2 \dot{F} \end{pmatrix}.
\end{align*}
The components of the curvature tensor are as follows:
\begin{align*}
&R(\partial_v, \partial_u) = 0,\quad R(\partial_v, \partial_i) = 0, \quad R(\partial_v, \partial_n) = 0, \quad R(\partial_i, \partial_j) = 0, \quad i, j = 1,\ldots, n - 1,\\
&R(\partial_i, \partial_n) = \partial_n \dot{F} \begin{pmatrix} 
0 & (\delta_{ik})^{n}_{k = 1} & 0 \\
0 & 0 & -((\delta_{ik})^{n}_{k = 1})^{t} \\
0 & 0 & 0 \end{pmatrix},\\
&R(\partial_i, \partial_u) = \begin{pmatrix} 
0 & Z_{iu} & 0 \\
0 & 0 & -Z_{iu}^t\\
0 & 0 & 0 \end{pmatrix}, \quad \text{where} \quad Z_{iu} = \bigg(\bigg(\frac{1}{2} \partial_i \partial_k H + \delta_{ik} (\ddot{F} - \dot{F}^2)\bigg)^{n-1}_{k = 1}, 0\bigg)\\
&R(\partial_n, \partial_u) = \partial_n \dot{F} \begin{pmatrix} 
0 & 0 & -H \\
0 & 1 & 0 \\
0 & 0 & 2 \end{pmatrix}.
\end{align*}
Consider the field of frames  
$$p = \partial_v,\quad e_i = \partial_i, \quad q = \partial_u - \frac{1}{2} H \partial_v.$$
In notation of the proof of Theorem \ref{Thhol} it holds $R = R_3$, consequently  $R(\partial_i, \partial_u)$ must be equal to zero, in other words,
$$\frac{1}{2} \partial_i \partial_k H + \delta_{ik} (\ddot{F} - \dot{F}^2) = 0, \quad \text{for all} \quad i, k = 1,\ldots, n - 1.$$ 
From this we find out that
$$H(x^1,\dots , x^{n-1},u) = a(u)\sum_{i=1}^{n-1} (x^{i})^2 + \sum_{i=1}^{n-1} b_i(u) x^i + c(u),$$
where $a(u)$, $b_i(u)$, $c(u)$ are functions and
\begin{equation}
\label{Riccati1}
\ddot{F} - \dot{F}^2 = - a(u).
\end{equation} 
It is easy to see that the coordinates $v,x^1,\dots,x^{n-1}$ may be changed in such a way that $c(u)=0$ and all $b_i(u)=0$.
Now it is easy to check that the only non-zero component of $\nabla R$ is $\nabla_n R(\partial_i, \partial_n)$, and it holds 
$$\partial_n \dot{F}\cdot\nabla_n R(\partial_i, \partial_n) = \left(\partial^2_n \dot{F} + \partial_n F\cdot  \partial_n \dot{F}\right)R(\partial_i,\partial_n).$$
Let $\theta=\theta_vdv+\sum_{i=1}^n\theta_idx^i+\theta_udu$.
We get the equality 
$$\theta_n \partial_n \dot{F} = \partial^2_n \dot{F} + \partial_n F\cdot \partial_n \dot{F}.$$
Let $U_0\subset U$ be the open subset, where $\partial_n \dot{F}$ is non-vanishing. Suppose that $U_0\neq\emptyset$. Then
on $U_0$ it holds
$$\theta=\theta_n dx^n,\quad \theta_n=\partial_n F+\partial_n\ln|\partial_n\dot F|.$$
Since $\theta_n$ is a smooth function on $U$, $U_0=U$. Note that $d\omega=
\partial_n \dot{F} dx^n\wedge du$. We conclude that for each coordinate neighborhood $U$, either $d\omega|_U=0$ or $d\omega|_U$ is non-vanishing on $U$. Since $M$ is connected, we conclude that $d\omega$ and $\partial_n \dot F$ are non-vanishing on $M$. \qed

\begin{prop}\label{proptildeg} In the settings of the statement of Theorem \ref{MainTh}, the  conformal class of the local metric $g$ contains a unique up to homothety metric $h$  that satisfies  \eqref{tildeg}.
\end{prop}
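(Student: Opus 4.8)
The plan is to exhibit the metric $h$ explicitly as a conformal rescaling $h=e^{2f}g$ of the local metric $g$ from Theorem~\ref{MainTh}, and then to verify that it satisfies the two conditions in \eqref{tildeg}. Recall from Proposition~\ref{Propmetrh} that such a metric exists and is unique up to homothety; here the point is to pin it down in the coordinates $v,x^1,\dots,x^n,u$ and to see that the rescaling factor depends only on the single ``transverse'' variable $x^n$ (and possibly $u$). First I would write, for $h=e^{2f}g$, the transformation rule $\omega_h=\omega-df=\dot F\,du-df$, and recall that $\nabla R=\theta\otimes R$ with $\theta=\theta_n\,dx^n$, $\theta_n=\partial_nF+\partial_n\ln|\partial_n\dot F|$, as computed in the proof of Theorem~\ref{MainTh}. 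The condition $\nabla R=-3\omega_h\otimes R$ then forces $\theta=-3\omega_h$, i.e. $\theta_n\,dx^n=-3(\dot F\,du-df)$. Comparing the $du$-components gives $\partial_uf=\dot F$, and comparing the $dx^n$-components gives $\partial_n f=-\tfrac13\theta_n=-\tfrac13\bigl(\partial_nF+\partial_n\ln|\partial_n\dot F|\bigr)$; since the right-hand sides involve only $x^n$ and $u$ and are mutually compatible (one checks the mixed-partials condition $\partial_u\partial_n f=\partial_n\dot F=\partial_n\partial_u f$ using $\partial_u\bigl(\partial_nF+\partial_n\ln|\partial_n\dot F|\bigr)=-3\,\partial_n\dot F$, which follows by differentiating the Riccati equation \eqref{Riccati1} together with $\partial_u F=\ldots$), there is a solution $f=f(x^n,u)$, unique up to an additive constant.

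Next I would check the first condition in \eqref{tildeg}, $\nabla h=-2\omega_h\otimes h$: this is automatic, since for any $h=e^{2f}g$ in the conformal class the Weyl connection $\nabla$ satisfies $\nabla h=-2\omega_h\otimes h$ with $\omega_h=\omega-df$ by the very definition of a Weyl structure (equation \eqref{nablag}). So the only genuine content is the curvature condition, which we have just arranged. Thus $h=e^{2f}g$ with the $f$ constructed above satisfies \eqref{tildeg}.

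For uniqueness up to homothety: if $h'=e^{2\tilde f}g$ is another metric in the conformal class satisfying \eqref{tildeg}, then subtracting the two instances of $\nabla R=-3\omega_{h'}\otimes R$ and $\nabla R=-3\omega_h\otimes R$ gives $\omega_{h'}=\omega_h$, hence $d(\tilde f-f)=0$, so $\tilde f-f$ is locally constant and $h'=e^{2\tilde f}g=e^{2(\tilde f - f)}h$ is a constant multiple of $h$; this is exactly homothety. (Alternatively one may simply invoke the uniqueness already established in Proposition~\ref{Propmetrh}, of which this is the local shadow.) The main obstacle is the consistency/integrability check for the overdetermined system $\partial_uf=\dot F$, $\partial_nf=-\tfrac13\theta_n$: this is where the Riccati equation \eqref{Riccati1} and the specific form $\theta_n=\partial_nF+\partial_n\ln|\partial_n\dot F|$ must be used in an essential way, and it is the only place where a short computation is unavoidable. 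Everything else is bookkeeping with the standard Weyl-rescaling formulas.
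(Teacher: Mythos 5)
Your route is essentially the paper's: you solve $df=\omega+\tfrac13\theta$ (equivalently, exhibit a potential for $\theta+3\omega$) and set $h=e^{2f}g$, noting that $\nabla h=-2\omega_h\otimes h$ is automatic and that uniqueness up to homothety follows because the recurrence 1-form is determined by $R\neq 0$ (which holds since $\partial_n\dot F$ is non-vanishing); the paper does the same, just writing the potential explicitly as $\varphi=F+\ln|\partial_n\dot F|$ and putting $h=e^{\frac23\varphi}g$.

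However, two of your displayed identities carry a sign error. From $\theta_n\,dx^n=-3\dot F\,du+3\,df$ the $dx^n$-component gives $\partial_n f=+\tfrac13\theta_n$, not $-\tfrac13\theta_n$, and the integrability identity you invoke should read $\partial_u\bigl(\partial_nF+\partial_n\ln|\partial_n\dot F|\bigr)=+3\,\partial_n\dot F$: differentiating the Riccati equation \eqref{Riccati1} in $x^n$ gives $\partial_n\ddot F=2\dot F\,\partial_n\dot F$, hence $\partial_u\ln|\partial_n\dot F|=2\dot F$ and $\partial_u\theta_n=\partial_n\dot F+2\partial_n\dot F=3\partial_n\dot F$. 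As written, your version of the identity is false and your system $\partial_uf=\dot F$, $\partial_nf=-\tfrac13\theta_n$ would generically fail the mixed-partials test; with the corrected signs the check goes through and your $f$ coincides (up to a constant) with $\tfrac13\varphi$ above, so the slip is easily repaired. Also, you should drop the parenthetical fallback of ``invoking Proposition~\ref{Propmetrh}'': in the paper that global statement is proved later precisely by gluing the local metrics of this proposition, so appealing to it here would be circular; your direct uniqueness argument (subtracting the two recurrence relations and using $R\neq 0$) is the right one.
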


\begin{proof}
	The proof of Theorem~\ref{MainTh} shows that the curvature tensor $R$
	satisfies $$\nabla R=\theta\otimes R,\quad \theta=\partial_n( F+\ln|\partial_n\dot F|) dx^n.$$
	It holds $d \theta= -3d\omega$, and
	$$\theta=-3\omega +d\varphi,\quad \varphi=F+\ln|\partial_n\dot F|.$$ 
	Consider the metric $$h=e^{\frac{2}{3}\varphi}g.$$ The corresponding 1-from is given by 
	$$ \omega_h=\omega-\frac{1}{3}d\varphi=-\frac{1}{3}\theta.$$
	Thus, $h$ satisfies \eqref{tildeg}. It is obvious that any other metric from the local conformal class satisfying \eqref{tildeg} is homothetic to $h$.
\end{proof}

\begin{prop}\label{lemchangeF} In the settings of Theorem~\ref{MainTh}, the coordinates and the local metric may be chosen in such a way that $a(u)=0$. Then 
	\begin{equation}\label{formulaf}F=-\ln| u+\psi(x^n)|+c( x^n)\end{equation}
	for some functions $\psi( x^n)$ and $c( x^n)$ such that $\psi'(x^n)$ is non-vanishing.
\end{prop}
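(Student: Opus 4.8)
The plan is to start from the conclusion of Theorem~\ref{MainTh}: we have a local metric $g=2dvdu+h+H(du)^2$ with $h=\sum(dx^i)^2+e^{-2F}(dx^n)^2$, with $H=a(u)\sum_{i=1}^{n-1}(x^i)^2$ after the reduction already carried out, and with $\dot F$ solving the Riccati equation $\ddot F-\dot F^2=-a(u)$. The first step is to exploit the remaining coordinate freedom in the fibre direction: changes of $v$ and of the $x^i$ ($i<n$), together with reparametrizations of $u$, that preserve the normal form \eqref{metricag}. I expect that a transformation of the form $u\mapsto \tilde u(u)$ combined with a rescaling $v\mapsto v/\tilde u'(u)$ (and a compensating rescaling of the $x^i$) sends $a(u)$ to a new function $\tilde a(\tilde u)$ governed by the classical change-of-variable law for the coefficient of a second-order linear ODE; since the Riccati equation $\ddot F-\dot F^2=-a$ linearizes (via $\dot F=-\dot y/y$) to $\ddot y = a\,y$, the freedom in choosing $\tilde u(u)$ is exactly the freedom to prescribe the "potential" $a$ in a Sturm--Liouville problem. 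Choosing $\tilde u$ so that two independent solutions of $\ddot y=ay$ become affine functions of $\tilde u$ forces $\tilde a\equiv 0$; concretely, if $y_1,y_2$ are independent solutions with Wronskian normalized to a constant, one sets $\tilde u = y_2/y_1$ (a well-defined local diffeomorphism since the Wronskian is non-zero) and rescales $v$ accordingly. This is the step I expect to be the main obstacle: one must check carefully that the accompanying rescalings of $v$ and $x^1,\dots,x^{n-1}$ restore \eqref{metricag} exactly (not just up to conformal factor), keep $\omega=\dot F\,du$ in the right form after the $u$-reparametrization, and do not disturb the flatness of $h$ in the $x^n$-direction.

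Once $a(u)\equiv 0$, the Riccati equation becomes $\ddot F-\dot F^2=0$, i.e.\ $\partial_u(\dot F)=\dot F^2$, an ODE in $u$ for each fixed $x^n$. Writing $G=\dot F$ we get $\partial_u G = G^2$, whose general solution is $G=-1/(u+\psi(x^n))$ for an arbitrary function $\psi$ of the transverse variable $x^n$ (the constant of integration in $u$ depends on $x^n$), together with the trivial solution $G\equiv 0$. The trivial solution is excluded because Theorem~\ref{MainTh} requires $\partial_n\dot F$ non-vanishing, so $\dot F$ cannot be identically zero; more precisely $\partial_n G=\psi'(x^n)/(u+\psi(x^n))^2$, which is non-vanishing exactly when $\psi'(x^n)\neq 0$, giving the stated condition on $\psi$.

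It remains to integrate $\dot F=\partial_u F=-1/(u+\psi(x^n))$ in $u$, which yields
\[F=-\ln|u+\psi(x^n)|+c(x^n)\]
for an arbitrary "constant of integration" $c(x^n)$ depending on $x^n$. This is precisely \eqref{formulaf}, and $\psi'(x^n)\neq 0$ is inherited from the previous paragraph. Finally one should remark that $c(x^n)$ can be absorbed (or not) by a further change of the $x^n$-coordinate, but since the statement only asserts existence of such functions $\psi,c$, no further normalization is needed; I would simply note that the freedom $x^n\mapsto \chi(x^n)$ rescales $e^{-2F}(dx^n)^2$ and so can be used later, but is not required here.
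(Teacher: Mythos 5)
Your second half (with $a\equiv 0$, solving $\partial_u\dot F=\dot F^2$ for fixed $x^n$, excluding the trivial branch via the non-vanishing of $\partial_n\dot F$, and integrating to get \eqref{formulaf} with $\psi'\neq 0$) is correct and coincides with the paper's argument. The gap is exactly at the step you flag as ``the main obstacle'' and leave unverified: you insist that the compensating transformations restore \eqref{metricag} \emph{exactly}, ``not just up to conformal factor''. This cannot work. Within the class of transformations you propose ($u$ a function of $\tilde u$ only, $x^i=\lambda(\tilde u)\tilde x^i$, $v$ adjusted), the conditions $g_{\tilde v\tilde u}=1$ and $g_{\tilde x^i\tilde x^j}=\delta_{ij}$ force $\partial_{\tilde v}v=1/u'(\tilde u)$ and $\lambda\equiv\pm1$; the term $-2\tilde v\,u''/u'$ then appears in the coefficient of $(d\tilde u)^2$, so $u$ must be affine in $\tilde u$, and $a(u)$ can only be replaced by $\beta^2a(\alpha+\beta u)$ --- it can never be killed. (Also, your compensation $v\mapsto v/\tilde u'(u)$ is not the right one: what is needed to cancel the cross terms $dx^i\,du$ created by a $u$-dependent rescaling of the $x^i$ is a shift of $v$ \emph{quadratic} in the $x^i$, not a rescaling of $v$.) The whole point of the proposition's wording ``the coordinates \emph{and the local metric} may be chosen'' is that one is allowed, and in fact forced, to pass to a different representative of the conformal class.

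The paper's proof does precisely this: with $x^i=e^{\varphi(\bar u)}\bar x^i$, $v=\bar v-\tfrac12\varphi'(\bar u)\sum_{i}(\bar x^i)^2$, $u'(\bar u)=e^{2\varphi(\bar u)}$, one gets $g=e^{2\varphi}\bar g$ where $\bar g$ is again of the form \eqref{metricag} but with $a$ replaced by $b=e^{4\varphi}a+(\varphi')^2-\varphi''$; requiring $b=0$ is a Riccati equation for $\varphi$, solvable locally, and since $\bar g=e^{-2\varphi}g$ with $\varphi=\varphi(\bar u)$ the associated $1$-form is again of the form $\dot G\,d\bar u$ with $G=F+\varphi$, so the hypotheses of Theorem~\ref{MainTh} persist. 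Your linearization idea ($\dot F=-\dot y/y$, $\ddot y=ay$, $\tilde u=y_2/y_1$) is a perfectly good device for producing such a $\varphi$ --- it is the same projective mechanism that reappears in the M\"obius transition functions of Theorem~\ref{Thglobal} --- but it only succeeds inside this conformal-rescaling framework, so as written your proof is missing its central step.
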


\begin{proof}
Following \cite{KT} we consider the  coordinate transformation:
	\begin{align*} v&=\bar v-\frac{1}{2}\varphi'(\bar u)\sum_{i=1}^{n-1}(\bar x^i)^2,\\
	x^i&=e^{\varphi(\bar u)}\bar x^i,\quad i=1,\dots,{i-1},\\
	x^n&=\bar x^n,\quad
	u=u(\bar u),\end{align*}
	where $\varphi(\bar u)$ is a function and it holds
	$$ u'(\bar u)=e^{2\varphi(\bar u)}.$$
	With respect to the new coordinate system we obtain
	$$g=e^{2\varphi(\bar u)}\bar g,$$
	where $$\bar g=e^{-2G}(d\bar x^n)^2+2d\bar vd\bar u+\sum_{i=1}^{n-1}(d\bar x^i)^2+b(\bar u)\sum_{i=1}^{n-1} (\bar x^i)^2 (d\bar u)^2,$$
	$$G(\bar x^n,\bar u)=F(\bar x^n,u(\bar u))+\varphi(\bar u),$$
	and
	$$b(\bar u)=e^{4\varphi(\bar u)}a(u(\bar u))+(\varphi'(\bar u))^2-\varphi''(\bar u).$$
	Requiring the condition $$b(\bar u)=0$$ we obtain the Riccati equation
	$$\varphi''-(\varphi')^2=ae^{4\varphi}.$$
	The function $G(\bar x^n,\bar u)$ satisfies
	$$\ddot G=\dot G^2,$$
	where dot denotes the partial derivative with respect to $\bar u$.
	We obtain the general solution 
	$$G(\bar x^n,\bar u)=-\ln|\bar u+\psi(\bar x^n)|+c(\bar x^n),$$
	where $\psi(x^n)$ and $c(x^n)$ are some functions. 
	
	Changing the notation in the obvious way we obtain the following metric from the local conformal class:
	$$g=e^{-2F}(dx^n)^2+2dvdu+\sum_{i=1}^{n-1}(dx^i)^2,\quad F=-\ln| u+\psi( x^n)|+c( x^n).$$ 
	Note that $$\partial_{x^n}\dot F=\frac{\psi'(x^n)}{(u+\psi(x^n))^2}.$$
	This implies that $\psi'$ is non-vanishing.
	This proves the proposition.
\end{proof}

The Weyl structure obtained in Proposition \ref{lemchangeF} depends on two functions $\psi(x^n)$ and $c(x^n)$. We will see now that after a proper change of the coordinates the Weyl structure depends on a single function.

{\bf Proof of Theorem \ref{ThFinalLocal}.}
By Theorem \ref{MainTh} and Proposition \ref{lemchangeF} there exists a local metric $g$ from the conformal class $c$ given by \eqref{metricag} with $F$ given by \eqref{formulaf}. 
Note that \eqref{eqmain}
implies $$\partial_{x^n}\ddot F-2\dot F\partial_{x^n}\dot F=0.$$
Consequently, $$\partial_u(2F-\ln|\partial_{x^n}\dot F|)=0,$$
i.e., the function $2F-\ln|\partial_{x^n}\dot F|$ depends only on the variable $x^n$.
Consider the new coordinate $t$ related to the coordinate $x^n$ by a function $x^n=x^n(t)$ such that  
\begin{equation}\label{partialxnpartialt}\frac{\partial x^n}{\partial t}=e^{\frac{1}{3}(2F-\ln|\partial_{x^n}\dot F|)}.\end{equation}
Then it holds
$$\partial_t=e^{\frac{1}{3}(2F-\ln|\partial_{x^n}\dot F|)}\partial_{x^n}, \quad dx^n=e^{\frac{1}{3}(2F-\ln|\partial_{x^n}\dot F|)}dt.$$
Consequently, with respect to the coordinates
$t,v,x^1,\dots, x^{n-1},u$ the metric $g$ may be written as
$$g=e^{-2G}(dt)^2+2dvdu+\sum_{i=1}^{n-1}(dx^i)^2,$$
where $$G(t,u)=\frac{1}{3}(F(x^n(t),u)+\ln|\partial_{x^n}\dot F(x^n(t),u)|).$$
From \eqref{formulaf} and \eqref{partialxnpartialt} it follows that the function $c$ satisfies
$$c=\frac{3}{2}\ln\frac{\partial x^n}{\partial t} +\frac{1}{2}\ln|\partial_{x^n}\psi|.$$
Using this and the definition of $G$  we obtain
\begin{multline*}G=\frac{1}{3}\left(-\ln|u+\psi|+c+\ln\frac{|\partial_{x^n}\psi|}{(u+\psi^2)^2}  \right)=-\ln|u+\psi|+\frac{1}{3}(c+\ln|\partial_{x^n}\psi|)\\=-\ln|u+\psi|+\frac{1}{2}\ln\left|\frac{\partial x^n}{\partial t}\partial_{x^n}\psi\right|=-\ln|u+\psi|+\frac{1}{2}\ln|\partial_{t}\psi|.\end{multline*}
Thus $G$ is given by \eqref{funG}.
This implies that $$|\partial_t\dot G|=e^{2G}.$$ 
According to the proof of Proposition \ref{proptildeg},
$$h=e^{\frac{2}{3}(G+\ln|\partial_t\dot G|)}g=e^{2G}g=
(dt)^2+e^{2G}\left(2dvdu+\sum_{i=1}^{n-1}(dx^i)^2\right),$$
and
$$ \omega_h=-\frac{1}{3}\theta=-\frac{1}{3}\partial_{x^n}( F+\ln|\dot F'|) dx^n=-\partial_{x^n}Gdx^n=-\partial_tGdt.$$ 
The theorem is proved. \qed

%
%

Let us consider the recurrent Lorentzian Weyl structures in dimension 3.

\begin{theorem}
	\label{dim3Th}
	Let $(M,c,\nabla)$ be a connected non-closed Lorentzian Weyl manifold of dimension $3$ with recurrent curvature tensor. Then
	one of the following holds:
	\begin{itemize}
		\item[$\bullet$] 		 around each point of $M$ there exist coordinates $v,x,u$ such that $c=[g]$,  $$g = 2dvdu + e^{-2F}(dx)^2,\quad \omega=\dot F du,$$ where
		$F=F(x,u)$ is an arbitrary function with non-vanishing $\partial_x\dot F$; 	
		\item[$\bullet$]
		around each point of $M$ there exist coordinates $v,x,u$ such that $c=[g]$, 
		$$g = 2dvdu + (dx)^2 + H (du)^2,\quad \omega=a(u)xdu,$$ 
		$$H=H(v,x,u) =  a(u)vx+ \frac{1}{12} a^2(u) x^4 -\frac{1}{3} \dot{a}(u) x^3 + c(u)x,$$
		where the function $a(u)$ is non-vanishing.
	\end{itemize}
\end{theorem}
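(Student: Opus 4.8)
The plan is to follow the same strategy as in the proof of Theorem \ref{MainTh}, but now with $n=1$ so that one must go through the three holonomy possibilities supplied by Theorem \ref{Thhol} for $n=1$. By Theorem \ref{Thhol}, since $(M,c,\nabla)$ is non-closed and recurrent with $\dim M=3$, the holonomy algebra of $\nabla$ at a point is either $\g=\mathR(\id_{\Real^{1,2}}+p\wedge q)$ (the $1$-dimensional case) or $\g=\mathR(2\id_{\Real^{1,2}}+p\wedge q)\ltimes \mathR p\wedge e_1$ (the $2$-dimensional case). These two cases will correspond, respectively, to the first and second bullet of the statement. First I would treat the $1$-dimensional holonomy case: here one can invoke \cite[Theorem 8]{ParSpin} (or its low-dimensional version) exactly as in the proof of Theorem \ref{MainTh} with $n=1$ to get coordinates $v,x,u$ in which $g=2dvdu+e^{-2F}(dx)^2+H(du)^2$ and $\omega=\dot F\,du$ with $F=F(x,u)$, $H=H(u)$ (there is no $x^i$, $i\le n-1$, so the quadratic term in the $H$ from Theorem \ref{MainTh} is absent). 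The recurrence computation from the proof of Theorem \ref{MainTh} forces the component $R(\partial_x,\partial_u)$, whose obstruction there was $\tfrac12\partial_i\partial_kH+\delta_{ik}(\ddot F-\dot F^2)$, to vanish; but with no transverse $x^i$-directions this component is automatically zero, so no Riccati constraint on $F$ appears, and the only surviving requirement is $\partial_x\dot F\neq 0$ (which is exactly the condition that the Weyl structure be non-closed and that the holonomy be genuinely $1$-dimensional rather than trivial). A residual coordinate change in $u$ and $v$ can be used to absorb $H(u)$, giving the first bullet.

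Next I would treat the $2$-dimensional holonomy case $\g=\mathR(2\id+p\wedge q)\ltimes\mathR p\wedge e_1$. Here the relevant algebraic curvature tensors are given by \eqref{algReq1} in the proof of Theorem \ref{Thhol}, with $\alpha=2$, $k=0$, and $\rho(2\id+p\wedge q)=-5$. One again applies the parallel-transport normal form of \cite{ParSpin} adapted to this holonomy: the parallel isotropic line $\Real p$ gives a coordinate $u$ with $\partial_v=p$ parallel, and one gets $g=2dvdu+(dx)^2+H(du)^2$, $\omega=fdu$ with $f$ and $H$ functions of $v,x,u$ to be determined. Computing the Christoffel symbols and the curvature tensor in these coordinates (as was done for general $n$ in the proof of Theorem \ref{MainTh}), the structure equations $R=R_3$ of type \eqref{algReq1} — i.e.\ $R(p,q)=-\beta p\wedge e_1$, $R(e_1,q)=\beta(2\id+p\wedge q)$, $R(p,e_1)=0$ with $k=0$ — translate into a system of PDEs for $f$ and $H$. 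I expect that $R(p,e_1)=0$ forces $f=f(x,u)$ to be independent of $v$; that matching $R(e_1,q)$ to the prescribed form forces $f=a(u)x$ for some function $a(u)$ (with $a$ non-vanishing, since otherwise the holonomy drops and the structure becomes closed); and that the remaining equations determine the $x$-dependence of $H$ up to the quartic polynomial written in the statement, $H=a(u)vx+\tfrac1{12}a^2(u)x^4-\tfrac13\dot a(u)x^3+c(u)x$, where $c(u)$ is a free function and any $x^0$- and $x^2$-terms can be removed by admissible coordinate changes. The $\nabla R=\theta\otimes R$ condition should then be automatically satisfied with $\theta=-5\omega$ up to an exact form (consistent with $\rho(2\id+p\wedge q)=-5$), so it imposes no further constraint.

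The main obstacle I anticipate is the bookkeeping in the second bullet: one must carefully carry out the Christoffel-symbol and curvature computation for a metric of the form $2dvdu+(dx)^2+H(v,x,u)(du)^2$ with $\omega=f\,du$, impose that the curvature has exactly the algebraic type of \eqref{algReq1} (not merely that it is recurrent — recurrence is automatic once the holonomy is as stated, but one needs the precise normalization), and then integrate the resulting PDEs while keeping track of which terms can be killed by the residual coordinate freedom (shifts of $v$ by functions of $x,u$, affine changes of $x$, reparametrizations of $u$). The verification that the specific quartic coefficients $\tfrac1{12}a^2$ and $-\tfrac13\dot a$ are forced — rather than chosen — is the delicate point; it comes precisely from matching $R(e_1,q)$ and the $p\wedge e_1$-component of $R(p,q)$ in \eqref{algReq1} together with the $v$-dependence forced on $H$ by the term $R(e_1,q)=\beta(2\id+p\wedge q)$. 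Once these are pinned down, one checks directly that $d\omega=a(u)\,dx\wedge du\neq 0$ everywhere (using connectedness, as at the end of the proof of Theorem \ref{MainTh}), so the structure is genuinely non-closed, completing the proof. I would also remark that in the first bullet the holonomy is $1$-dimensional and in the second it is $2$-dimensional, which is the dichotomy referred to later in the paper (e.g.\ in Proposition \ref{prop:EW}).
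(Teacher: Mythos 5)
Your proposal follows essentially the same route as the paper's proof: split into the two holonomy cases of Theorem \ref{Thhol}, invoke the normal forms of \cite{ParSpin}, compute the Christoffel symbols and curvature, impose the algebraic type \eqref{algReq1} together with recurrence, integrate, and normalize by the residual coordinate freedom (including the connectedness argument for non-vanishing of $\partial_x\dot F$, resp.\ $a(u)$). The only minor slips are that the paper obtains $\partial_x^2 f=0$ (hence $f=a(u)x+b(u)$, with $b$ removed by a conformal rescaling and coordinate change) from the recurrence system rather than purely from matching $R(e_1,q)$ — your alternative route works but needs the \cite{ParSpin} relation $f=\partial_v H$, which you did not state — and the recurrence form is $\theta=-\tfrac{5}{2}\omega+d\ln|a|$ rather than $-5\omega$; neither affects the argument.
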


{\bf Proof of Theorem \ref{dim3Th}.}
Let $(M,c,\nabla)$ be a non-closed Lorentzian Weyl manifold of dimension $n + 2 = 3$ with recurrent curvature tensor. Then by~Theorem~\ref{Thhol} its holonomy algebra is one of the following: $$\g = \mathR (\id_{\Real^{1,2}} + p\wedge q),\quad \g = \mathR (2 \id_{\Real^{1,2}} + p\wedge q) \ltimes \mathR p\wedge e_1.$$

Consider the first case. Using the results from~\cite{ParSpin} we obtain that around each point of $M$ there exist coordinates $v,x,u$ and a~metric $g \in c$ such that
$$g = 2dvdu + e^{-2F}(dx)^2 + H (du)^2,\quad \omega= \dot{F}du,$$ 
where $H = H(u)$ and $F = F(x,u)$ are  functions. Applying a simple coordinate transformation, we may assume that $H=0$. 
The only non-zero Christoffel symbols of the connection $\nabla$ are the following:
\begin{align*}
\Gamma^{x}_{xx} = -\partial_x F, \quad
\Gamma^{u}_{uu} = 2 \dot{F}.
\end{align*}
The components of the curvature tensor are as follows
$$R(\partial_v, \partial_x) = 0, \quad
R(\partial_v, \partial_u) = 0, \quad
R(\partial_x, \partial_u) = \partial_x \dot{F} A,\quad A=\begin{pmatrix} 
0 & 0 & 0\\
0 & 1 & 0 \\
0 & 0 & 2 \end{pmatrix}.$$
For  $\nabla R$ we obtain
\begin{align*}
\nabla R(\partial_v, \partial_x) &=\nabla R(\partial_v, \partial_u) = 0, \quad \nabla_v R(\partial_x, \partial_u) = 0\\
\nabla_x R(\partial_x, \partial_u) &= \left( (\partial_x F) (\partial_x \dot{F}) + \partial^2_x \dot{F} \right) A,\\
\nabla_u R(\partial_x, \partial_u) &= \left( \partial_x \ddot{F} - 2 (\partial_x \dot{F}) \dot{F} \right) A.
\end{align*}
The condition $\nabla R=\theta\otimes \Real$ is equivalent to the following system of equations:
\begin{equation}
\theta_v \partial_x \dot{F} = 0, \quad 
\partial^2_x \dot{F} + (\partial_x F) (\partial_x \dot{F}) = \theta_x \partial_x \dot{F}, \quad 
\partial_x \ddot{F} - 2 (\partial_x \dot{F}) \dot{F} = \theta_u \partial_x \dot{F},
\end{equation}
where $\theta_\alpha = \theta (\partial_\alpha)$, $\alpha = v, x, u$.
Let $U_0$ be the set of non-zero points of the function $\partial_x\dot F$. Then on $U_0$ it holds 
$$\theta_v = 0, \quad 
\theta_x =\partial_x(F+\ln|\partial_x\dot F|) ,\quad  
\theta_u = \partial_u( - 2 \dot{F}+\ln|\partial_x\dot F|).$$
Hence, if $U_0\neq\emptyset$, then $U_0$ is the entire coordinate neighbourhood.

Consider the second case. From the results of~\cite{ParSpin} we obtain that around each point of $M$ there exist coordinates $v,x,u$ and a~metric $g \in c$ such that
$$g = 2dvdu + (dx)^2 + H (du)^2,$$ 
where $H$ is a function and the corresponding 1-form $\omega$ satisfies $$\omega=fdu,\quad \partial_v H = f.$$
The Christoffel symbols for the connection $\nabla$ are as follows
\begin{align*}
\Gamma_v = \begin{pmatrix} 
0 & 0 & \frac{1}{2} f \\
0 & 0 & 0 \\
0 & 0 & 0 \end{pmatrix}, \quad
\Gamma_x = \begin{pmatrix} 
0 & -f & \frac{1}{2} \partial_x H \\
0 & 0 & f \\
0 & 0 & 0 \end{pmatrix}, \quad
\Gamma_u = \begin{pmatrix}
\frac{1}{2} f & \frac{1}{2} \partial_x H & -\frac{1}{2} f H + \frac{1}{2} \dot H \\
0 & f & -\frac{1}{2} \partial_x H \\
0 & 0 & \frac{3}{2} f \end{pmatrix}.
\end{align*}

Consider the field of frames 
$$p = \partial_v,\quad e_1 = \partial_x ,\quad q = \partial_u - \frac{1}{2} H \partial_v.$$
It holds $R^{x}_{uvx} = \partial_v f$. On the other hand, from \eqref{algReq1} it follows that  $R(\partial_v, \partial_x) = R(p, e_1) = 0$. Hence, $\partial_v f = 0$. Using this we find the components of the curvature tensor 
\begin{align}
\label{compReq1}
\begin{split}
&R(\partial_v, \partial_x) = 0, \quad
R(\partial_v, \partial_u) = \frac{1}{2} \partial_x f \begin{pmatrix} 
0 & 1 & 0\\
0 & 0 & -1 \\
0 & 0 & 0 \end{pmatrix},\\
&R(\partial_x, \partial_u) = \begin{pmatrix} 
\frac{1}{2} \partial_x f & - \frac{1}{2} f^2 + \dot{f} + \frac{1}{2} \partial_x^2 H & - \frac{1}{2} (\partial_x f) H\\
0 & \partial_x f & \frac{1}{2} f^2 - \dot{f} - \frac{1}{2} \partial_x^2 H \\
0 & 0 & \frac{3}{2} \partial_x f \end{pmatrix}.
\end{split}
\end{align}
From \eqref{algReq1} it follows that 
$$\frac{1}{2} f^2 - \dot{f} - \frac{1}{2} \partial_x^2 H = 0 \quad \text{and} \quad
R(\partial_x, \partial_u) = \frac{1}{2} \partial_x f \begin{pmatrix} 
1 & 0 & -H\\
0 & 2 & 0 \\
0 & 0 & 3 \end{pmatrix}.$$
The covariant derivatives of $R$ are as follows
\begin{align*}
&\nabla_a R(\partial_v, \partial_x) = 0, \quad a = v,x,u,\\
&\nabla_v R(\partial_v, \partial_u) = 0, \quad 
\nabla_x R(\partial_v, \partial_u) = \frac{1}{2} \partial_x^2 f \begin{pmatrix} 
0 & 1 & 0\\
0 & 0 & -1 \\
0 & 0 & 0 \end{pmatrix},\\ 
&\nabla_u R(\partial_v, \partial_u) = \left( \frac{1}{2} \partial_x \dot{f} - \frac{5}{4} f \partial_x f \right) \begin{pmatrix} 
0 & 1 & 0\\
0 & 0 & -1 \\
0 & 0 & 0 \end{pmatrix},\\
&\nabla_v R(\partial_x, \partial_u) = \frac{1}{2} \partial_x^2 f \begin{pmatrix} 
0 & 1 & 0\\
0 & 0 & -1 \\
0 & 0 & 0 \end{pmatrix}, \quad 
\nabla_x R(\partial_x, \partial_u) = \frac{1}{2} \partial_x^2 f \begin{pmatrix} 
1 & 0 & -H\\
0 & 2 & 0 \\
0 & 0 & 3 \end{pmatrix},\\
&\nabla_u R(\partial_x, \partial_u) = \left( \frac{1}{2} \partial_x \dot{f} - \frac{5}{4} f \partial_x f \right) \begin{pmatrix} 
1 & 0 & -H\\
0 & 2 & 0 \\
0 & 0 & 3 \end{pmatrix}.
\end{align*}
The condition $\nabla R=\theta\otimes R$ is equivalent to the following system of equations: $$\partial_x^2 f = 0, \quad \theta_v \partial_x f = 0, \quad \partial_x^2 f = \theta_x \partial_x f, \quad \partial_x \dot f - \frac{5}{2} f \partial_x f = \theta_u \partial_x f,$$
where $\theta_\alpha = \theta (\partial_\alpha)$, $\alpha = v, x, u$.
We see that $f=f(x,u)=a(u)x+b(u)$. Applying a conformal rescaling of the metric and a simple coordinate transformation we may assume that $b(u)=0$.
Let $U_0$ be the set of points where $a(u)$ is non-vanishing. Then on $U_0$, $$\theta_u=\partial_u\ln|a(u)|-\frac{5}{2}f.$$
Since $\theta_u$ is smooth, $U_0$ is the entire coordinate neighborhood.  This proves the theorem. \qed

The curvature tensor of the first Weyl structure from Theorem \ref{dim3Th}  satisfies 
$$\nabla R=\theta  \otimes  R,\quad \theta=\partial_x (F+\ln|\partial_x\dot F|) dx+\partial_u(-2F+ \ln|\partial_x\dot F|)du.$$
Again we obtain $\nabla (R\otimes l^{3}_h)=0$
for $h=e^{\frac{2}{3}\varphi}g$, $\varphi=F+\ln|\partial_x\dot F|$.

For the second structure from Theorem \ref{dim3Th} it holds
$$\nabla R = \theta\otimes  R,\quad \theta= \left(\partial_u\ln|a(u)|- \frac{5}{2} a(u)x \right)du.$$
In this case, $\nabla (R\otimes l^{\frac{5}{2}}_{h})=0$
for $h=e^{\frac{4}{5}\varphi}g$, $\varphi=\ln|a(u)|$.

			\section{The remaining coordinate freedom and symmetries of generic structures}\label{sect:coord}
			
In Section \ref{sec:local}, we used coordinate transformations to simplify the expressions for the recurrent Weyl structures, resulting in Theorem \ref{ThFinalLocal}. Here we investigate the remaining coordinate freedom that preserves these expressions. In doing so, we will also find the infinitesimal symmetries of generic recurrent Weyl structures.

We start with $\dim M \geq 4$. 
\begin{prop} \label{psl2}
Let $(M,h,\omega_h)$ be a non-closed recurrent Lorentzian Weyl manifold of dimension $n+2\geq 4$. In a neighborhood of each point in $M$, there exists local coordinates  so that the pair $(h,\omega_h)$ takes the form 
\begin{equation}
\begin{aligned}
h &= (dt)^2+\frac{\psi'(t)}{(u+\psi(t))^2}\left( 2du dv+\sum_{i=1}^{n-1} (dx^i)^2\right),\\
\omega_h &= \left(\frac{\psi'(t)}{u+\psi(t)}-\frac{\psi''(t)}{2\psi'(t)} \right)dt,
\end{aligned} \label{eq:Weyl}
\end{equation}
with $\psi'(t)>0$. Furthermore, the Lie algebra of vector fields preserving this form is spanned by 
\begin{equation}
\partial_v, \qquad \partial_{x^i}, \qquad 
x^i \partial_v-u \partial_{x^i}, \qquad x^i \partial_{x^j}-x^j \partial_{x^i}, \label{eq:Killing}
\end{equation}
all of which are Killing fields of the metric $h$ that act trivially on $\omega_h$, in addition to the five vector fields
\begin{equation}
\begin{gathered}
\partial_t, \qquad 2 t \partial_t+3 \sum_{i=1}^{n-1} x^i \partial_{x^i}+6v \partial_v, \\
\partial_u, \qquad 2 u \partial_u+\sum_{i=1}^{n-1} x^i \partial_{x^i}, \qquad u^2 \partial_u+u \sum_{i=1}^{n-1} x^i \partial_{x^i} -\frac{1}{2} \sum_{i=1}^{n-1} (x^i)^2 \partial_v.
\end{gathered} \label{eq:aff1psl2}
\end{equation}
\end{prop}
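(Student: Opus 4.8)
The plan is to prove Proposition \ref{psl2} in two parts: first that the form \eqref{eq:Weyl} is attainable, and then that the stated twelve (or rather $(2n-1)+\binom{n-1}{2}+5$) vector fields span the Lie algebra of symmetries of this form. The first part is essentially a bookkeeping corollary of Theorem \ref{ThFinalLocal}: starting from $h=(dt)^2+e^{2G}(2dvdu+\sum_i (dx^i)^2)$ with $G$ as in \eqref{funG}, one substitutes $G=-\ln|u+\psi(t)|+\tfrac12\ln|\psi'(t)|$ and uses the normalization $\psi'(t)>0$ (justified in the discussion following Theorem \ref{ThFinalLocal}), so that $e^{2G}=\psi'(t)/(u+\psi(t))^2$, and computes $\omega_h=-\partial_t G\, dt = \bigl(\frac{\psi'(t)}{u+\psi(t)}-\frac{\psi''(t)}{2\psi'(t)}\bigr)dt$ directly. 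This is routine differentiation; I would state it in one sentence.

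For the symmetries, I would first verify the easy direction: that each of the listed vector fields indeed preserves $(h,\omega_h)$ in the appropriate weighted sense. For the fields in \eqref{eq:Killing} this amounts to checking $L_X h=0$ and $L_X\omega_h=0$; since none of these fields involves $\partial_t$ or $\partial_u$ and $\omega_h$ is a function of $t,u$ times $dt$, the condition $L_X\omega_h=0$ is immediate, and $L_X h=0$ is the standard computation that these are the Killing fields of the flat "$2dvdu+\sum (dx^i)^2$" block tensored with the positive $t$-factor — in fact this is the Heisenberg$\,\rtimes\,\mathfrak{so}(n-1)$ algebra already mentioned in the introduction. For the five fields in \eqref{eq:aff1psl2} one checks directly that $L_X h=2\lambda h$ and $L_X\omega_h=-d\lambda$ for a suitable $\lambda\in C^\infty(U)$ depending on the field (for $\partial_t$ and $\partial_u$, $\lambda=0$; for the scaling fields and the quadratic field, $\lambda$ is a simple explicit function). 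The key structural point to emphasize is that the projective vector fields $\partial_u$, $2u\partial_u+\sum x^i\partial_{x^i}$, $u^2\partial_u+u\sum x^i\partial_{x^i}-\tfrac12\sum(x^i)^2\partial_v$ span a copy of $\mathfrak{sl}_2(\mathbb R)$ acting on the $u$-line, while $\partial_t$ and $2t\partial_t+3\sum x^i\partial_{x^i}+6v\partial_v$ span the affine algebra $\mathfrak{aff}_1(\mathbb R)$ acting on the $t$-line — this matches the group $\SAff\times\PSL$ that appears in the local classification theorem, and the presence of $\psi$ inside $h$ is what forces the $t$-action to be only affine rather than projective.

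The main obstacle — and the part requiring genuine argument rather than verification — is the reverse inclusion: showing that there are \emph{no other} infinitesimal symmetries. For this I would take an arbitrary vector field $X=X^t\partial_t+X^v\partial_v+\sum X^i\partial_{x^i}+X^u\partial_u$ and impose $L_X h=2\lambda h$, $L_X\omega_h=-d\lambda$ as a PDE system on the components $X^t,X^v,X^i,X^u$ and on $\lambda$. Exploiting the block structure of $h$ — the rigid $(dt)^2$ term together with the conformally-flat block with conformal factor $e^{2G(t,u)}$ depending only on $t$ and $u$ — one extracts, from the "pure $dt$" and "pure $du$, $dv$, $dx^i$" components of $L_X h$, that $X^t$ depends only on $t$, that $X^u$ depends only on $u$ (the latter from the off-diagonal $dt\,du$ and $dt\,dx^i$ components vanishing), and that $\lambda$ is determined as $\lambda=\partial_t X^t$; then the $x^i$- and $v$-components of $L_X h=2\lambda h$ give a finite-dimensional family for the $x^i$- and $v$-dependence (essentially conformal Killing fields of flat $\mathbb R^{n}$ of the special form compatible with the null structure, forcing linear dependence on $v,x^i$). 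Finally the condition involving $G$ — coming from differentiating the conformal factor, $X^t\partial_t G+X^u\partial_u G+\partial_t X^t=\lambda$, together with $L_X\omega_h=-d\lambda$ — yields ODE constraints that, once $\psi$ is a \emph{generic} function, kill all the freedom except the twelve listed generators; the affine-only restriction on $X^t$ is exactly the statement that $X^t(t)$ must be at most linear because higher terms would have to be matched by $\psi$-dependent terms that a generic $\psi$ cannot absorb. I would organize this as: solve the linear part first, reducing to a short list of candidate fields, then plug into the remaining (nonlinear-in-$\psi$) equations and read off that the coefficients of the "extra" candidates must vanish. The one subtlety to flag is that for \emph{non-generic} $\psi$ (the ones classified later in Section \ref{sect:symmetric}) additional solutions do appear, so the proof here must either restrict to generic $\psi$ or, more cleanly, establish only that \eqref{eq:Killing}–\eqref{eq:aff1psl2} always preserve the form and span the algebra of those symmetries that preserve the \emph{coordinate form} \eqref{eq:Weyl} itself — which is all that is needed for the subsequent reduction to the $\SAff\times\PSL\times\mathbb Z_2$-action on $\psi$.
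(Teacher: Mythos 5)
The first half of your plan (deriving \eqref{eq:Weyl} from Theorem \ref{ThFinalLocal} with the normalization $\psi'(t)>0$) is exactly what the paper does and is unproblematic. The gap is in the symmetry half, where you conflate two different notions: infinitesimal symmetries of a \emph{fixed} structure $(h,\omega_h)$, i.e.\ solutions of $L_Yh=2\lambda h$, $L_Y\omega_h=-d\lambda$, and vector fields preserving the coordinate \emph{form} \eqref{eq:Weyl}, i.e.\ fields whose flow carries the structure defined by $\psi$ to a structure of the same shape defined by some other admissible function, up to homothety of $h$. The proposition asserts the latter. Your ``easy direction'' claim that the five fields \eqref{eq:aff1psl2} satisfy $L_Yh=2\lambda h$ and $L_Y\omega_h=-d\lambda$ is false for all but the special $\psi$ classified in Section \ref{sect:symmetric}: for instance $L_{\partial_u}h=-\frac{2}{u+\psi(t)}\cdot\frac{\psi'(t)}{(u+\psi(t))^2}\bigl(2\,du\,dv+\sum_i(dx^i)^2\bigr)$ cannot equal $2\lambda h$ for any $\lambda$, since the $(dt)^2$ term of $h$ is untouched; the flow of $\partial_u$ simply shifts $\psi$ by a constant. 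The same applies to $\partial_t$, the two scaling fields and the quadratic field: their flows act nontrivially on $\psi$, as the paper points out right after the proposition.

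Consequently the determining system you propose for the hard direction is the wrong one: imposing $L_Yh=2\lambda h$, $L_Y\omega_h=-d\lambda$ for generic $\psi$ yields only \eqref{eq:Killing} (this is exactly why generic structures have symmetry algebra of dimension $(2n-1)+\binom{n-1}{2}$), so your argument would establish a strictly smaller algebra than the one claimed, and ``restricting to generic $\psi$'' does not repair it. The correct determining equations come from requiring $\varphi_s^*h=e^{2\lambda_s}\,(\text{form \eqref{eq:Weyl} with }\mu_s)$ and $\varphi_s^*\omega_h=(\text{corresponding 1-form})-d\lambda_s$, observing that $\lambda_s$ must be constant (uniqueness of $h$ up to homothety) and $\mu_s=\mu_s(t)$, and differentiating at $s=0$; this produces the inhomogeneous linear system \eqref{eq:Ysys} in the unknowns $Y$, the constant $\tilde\lambda$ and the function $\tilde\mu(t)$. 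Solving that system — the paper streamlines the computation by using the preserved parallel vector field $\partial_v$ and the preserved parallel distributions, much as you suggest for your block-structure analysis, and the existence of $\tilde\mu$ imposes one additional relation among the integration constants — gives exactly the span of \eqref{eq:Killing} and \eqref{eq:aff1psl2}. You do name this correct reading in your closing sentence, but only as an alternative, while both your verification of the five extra fields and your proposed PDE analysis are carried out for the fixed-structure notion; as organized, the proof does not establish the proposition.
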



\begin{proof}
According to Theorem \ref{ThFinalLocal} and the paragraph following the theorem, if $(M,c,\nabla)$ is a non-closed recurrent Lorentzian Weyl manifold with $\dim M \geq 4$, then there exists coordinates so that a representative $h\in c$ takes the form
\[ h= (dt)^2+\frac{\psi'(t)}{(u+\psi(t))^2}\left( 2du dv+\sum_{i=1}^{n-1} (dx^i)^2\right),\]
while the corresponding 1-form is given by
\[ \omega_h = \left(\frac{\psi'(t)}{u+\psi(t)}-\frac{\psi''(t)}{2\psi'(t)} \right)dt.\] 
Furthermore, the representative $h \in c$ is unique up to homothety, and $\omega_h$ defines uniquely a 1-dimensional distribution $\langle \partial_t \rangle$. This distribution is parallel: $\nabla \partial_t \in \langle \partial_t \rangle$.

If $\varphi_s$ is a 1-parameter group of transformations preserving the coordinate form of the recurrent Weyl structure given by $(h,\omega_h)$, then we have 
\begin{align*}
  \varphi_s^* h &= e^{2\lambda_s} \left((dt)^2+\frac{\mu_s'(t)}{(u+\mu_s(t))^2}\left( 2du dv+\sum_{i=1}^{n-1} (dx^i)^2\right) \right), \\
  \varphi_s^* \omega_h &= \left(\frac{\mu_s'(t)}{u+\mu_s(t)}-\frac{\mu_s''(t)}{2\mu_s'(t)} \right)dt-d\lambda_s. 
\end{align*}
Since the representative $h$ of the conformal class is unique up to homothety, it is clear that $\lambda_s$ is constant, while $\mu_s$ is a function of $t$ only. The equations for the vector field $Y$ whose flow is $\varphi_s$ are then given by 
\begin{equation}
\begin{split}
L_Y h &= 2 \tilde\lambda h + \left(\frac{\tilde \mu'(t)(u+\psi(t))-2\psi'(t) \tilde \mu (t)}{(u+\psi(t))^3}\right)\left( 2du dv+\sum_{i=1}^{n-1} (dx^i)^2\right), \\
L_Y \omega_h &= \left(\frac{\tilde \mu'(t)(u+\psi(t))-\psi'(t)\tilde \mu(t)}{(u+\psi(t))^2} - \frac{\tilde \mu''(t) \psi'(t)-\psi''(t)\tilde \mu'(t)}{2 \psi'(t)^2} \right) dt-d\tilde \lambda,
\end{split} \label{eq:Ysys}
\end{equation}
where $\tilde \lambda = \frac{d}{ds}\big|_{s=0} \lambda_s$ is a constant and $\tilde \mu = \frac{d}{ds}\big|_{s=0} \mu_s$ is a function of $t$. The right-hand-side of these equations follows from the definition of the Lie derivative: $L_Y h= \frac{d}{ds} \big|_{s=0} (\varphi_s^* h)$ and $L_Y \omega_h= \frac{d}{ds} \big|_{s=0} (\varphi_s^* \omega_h)$. The vector fields \eqref{eq:Killing} and \eqref{eq:aff1psl2} can be found by a direct analysis of \eqref{eq:Ysys}, but the computations are easier to follow if we focus on the parallel vector fields and distributions admitted by \eqref{eq:Weyl}. 

The general recurrent Weyl structure \eqref{eq:Weyl} admits a unique parallel (null) vector field $\partial_v$. It also admits the parallel distributions $\langle \partial_t \rangle$,  $\langle \partial_t \rangle^\perp = \langle \partial_u,\partial_v,\partial_{x^1},\dots, \partial_{x^{n-1}} \rangle$, $\langle \partial_v \rangle^\perp \cap \langle \partial_t \rangle^\perp = \langle  \partial_v,\partial_{x^1},\dots, \partial_{x^{n-1}} \rangle$, and it is clear that $Y$ must preserve these distributions. 

Let us write $Y$ in coordinates:
\[Y = A \partial_v+\sum_{i=1}^{n-1} B^i \partial_{x^i}+B^n \partial_t+C \partial_u,  \]
where $A,B^1,\dots,B^{n}, C$ are smooth functions. They must satisfy the following conditions:
\begin{align*}
  L_Y \partial_v = -A_0 \partial_v \qquad & \Leftrightarrow \qquad \partial_v A=A_0, \partial_v B^1=\cdots = \partial_v B^n = \partial_v C =0, \\
  L_Y \partial_t \in \langle \partial_t \rangle  \qquad & \Leftrightarrow  \qquad  \partial_t A=\partial_t B^1=\cdots = \partial_t B^{n-1} = \partial_t C =0, \\
  L_Y \partial_{x^i} \in \langle \partial_v, \partial_{x^i}, \dots, \partial_{x^{n-1}} \rangle \qquad & \Leftrightarrow \qquad \partial_{x^i} C = \partial_{x^i} B^n = 0, \\
  L_Y \partial_u \in \langle \partial_t \rangle^\perp \qquad & \Leftrightarrow \qquad \partial_u B^n =0, \\
  (L_Y h)(\partial_u,\partial_u) =0 \qquad &\Leftrightarrow \qquad \partial_u A =0, \\
  (L_Y h)(\partial_{x^i},\partial_{x^j}) = 0 \qquad &\Leftrightarrow \qquad \partial_{x^i} B^j + \partial_{x^j} B^i =0, \\
   (L_Y h)(\partial_{u},\partial_{x^i}) = 0 \qquad &\Leftrightarrow \qquad \partial_{x^i} A + \partial_{u} B^i =0, \\
   (L_Y h)(\partial_{x^i}, \partial_{x^i}) =  (L_Y h)(\partial_{x^j}, \partial_{x^j})  \qquad &\Leftrightarrow \qquad \partial_{x^i} B^i = \partial_{x^j} B^j,\\
   (L_Y h)(\partial_{u},\partial_{v}) = (L_Y h)(\partial_{x^i}, \partial_{x^i}) \qquad &\Leftrightarrow \qquad \partial_v A+ \partial_u C=2 \partial_{x^i} B^i, \\
  (L_Y h)(\partial_t,\partial_t)=2 \tilde \lambda \qquad & \Leftrightarrow \qquad \partial_t B^n=\tilde \lambda.
\end{align*}
Here $A_0$ and $\tilde \lambda$ are constants. The general solution of this system of differential equations is parametrized by $(2n-1)+\binom{n-1}{2}+6$ constants, including $\tilde \lambda$ and $A_0$. Assuming that $Y$ satisfies these conditions, the requirement that a function $\tilde \mu(t)$ satisfying the first line of \eqref{eq:Ysys} exists  gives one additional relation between the constants, namely $A_0=3 \tilde \lambda$ (this is independent of $\psi$).  It reduces the number of parameters to  $(2n-1)+\binom{n-1}{2}+5$, and the  vector fields satisfying all of these conditions are exactly those listed in the proposition. They automatically satisfy the second line of \eqref{eq:Ysys}. 

The result was checked with the help of Maple's DifferentialGeometry package. It is straight-forward to verify that \eqref{eq:Killing} are Killing fields of the metric $h$ for any $\psi$. 
\end{proof}

Proposition \ref{psl2} says that all  non-closed  recurrent Lorentzian Weyl structures on manifolds of  dimension greater than 3 admit the infinitesimal symmetries \eqref{eq:Killing}, in some local coordinate system. These $(2n-1) + \binom{n-1}{2}$ vector fields span a Lie algebra abstractly isomorphic to $\mathfrak{heis}(n-1) \rtimes \mathfrak{so}(n-1)$. The Lie algebra spanned by \eqref{eq:aff1psl2} is abstractly $\mathfrak{aff}_1(\mathbb R) \oplus \mathfrak{sl}_3(\mathbb R)$. The full Lie algebra spanned by the vector fields of \eqref{eq:Killing} and \eqref{eq:aff1psl2} has Levi decomposition 
\[(\mathfrak{so}(n-1) \oplus \mathfrak{sl}_2(\mathbb R)) \ltimes (\mathfrak{aff}_1(\mathbb R) \ltimes \mathfrak{heis}(n-1)).\]
Any additional infinitesimal symmetry is a linear combinations of the vector fields \eqref{eq:aff1psl2}. To justify this last sentence, we note that the  condition for a vector field $Y$ to be a symmetry of the Weyl structure defined by $\psi$ is given by equations \eqref{eq:Ysys} with $\tilde \mu \equiv 0$, and it is not difficult to see that a symmetry is required to satisfy all of the conditions that a vector field preserving the form \eqref{eq:Weyl} satisfies.
For a generic  non-closed recurrent Lorentzian Weyl structure the flow of any linear combination of the vector fields \eqref{eq:aff1psl2} acts nontrivially. Thus a generic non-closed recurrent Lorentzian Weyl structure admits exactly $(2n-1) + \binom{n-1}{2}$  infinitesimal symmetries.

Integrating the first two vector fields of \eqref{eq:aff1psl2} results in the transformations 
\[u \mapsto u, \quad  x^i \mapsto e^{3s_2} x^i, \quad  t \mapsto e^{2s_2}t+s_1, \quad  v \mapsto  e^{6s_2} v, \qquad s_1,s_2 \in \mathbb R, \]
while integrating the last three vector fields of \eqref{eq:aff1psl2} results in the transformations
\[
 u \mapsto \frac{au+b}{cu+d}, \;\;  x^i \mapsto \frac{x^i}{cu+d}, \;\;  t \mapsto t, \;\;  v \mapsto  v+\frac{c/2}{cu+d} \sum_{i=1}^{n-1} (x^i)^2, \quad a,b,c,d \in \mathbb R,\; ad-bc=1.
\]
Under these transformations, the function $\psi$ is  transformed according to 
\[\psi(t) \mapsto \psi(e^{-2 s_2}(t-s_1)), \qquad  \psi(t) \mapsto \frac{a\psi(t)-b}{-c\psi(t)+d}, \]
respectively. Notice that the transformations parametrized by $a,b,c,d$ commute with the ones that are parametrized by $s_1,s_2$. The structure of this transformation group is $\SAff \times \PSL$. This product is the connected Lie group whose action gives rise to the Lie algebra of vector fields spanned by \eqref{eq:aff1psl2} ($\SAff$ denotes the connected component of the group $\Aff$ of affine transformations on $\mathbb R$). This group can be extended with the transformation 
\[  u\mapsto -u, \quad  x^i \mapsto x^i, \quad  t \mapsto-t, \quad  v \mapsto- v\]
which also preserves the form \eqref{eq:Weyl} and the condition $\psi'(t)>0$. Under this transformation $\psi$ is transformed as $\psi(t)\mapsto - \psi(-t)$.

Notice that the two transformations $(u,x^i,t,v) \mapsto (u,x^i,-t,v)$ and $(u,x^i,t,v) \mapsto (-u,x^i,t,-v)$ of the extended groups $\Aff$ and $\PGL$, respectively, also preserve the coordinate form of Theorem \ref{ThFinalLocal}, but not the condition $\psi'(t)>0$. 

To sum up, we have the following transformations on $\psi$,  along with combinations of such: 
\begin{equation}
\begin{aligned}
\psi(t) &\mapsto \psi(e^{-2s_2} (t-s_1)), \\
\psi(t) &\mapsto \frac{a \psi(t)-b}{-c \psi(t) + d} , \quad ad-bc =1, \\
\psi(t) &\mapsto  -\psi(-t).
\end{aligned} \label{eq:groupactionD4}
\end{equation}
This can be viewed as an action of the group $\SAff \times \PSL \times \mathbb Z_2 \subset \Aff \times \PGL$ on the space $\{\psi \in C^\infty(\mathbb R) \mid \psi'(t)>0 \}$. 

\begin{cor}
The local equivalence problem of  non-closed recurrent Lorentzian Weyl manifolds of dimension $\geq 4$ can be reduced to the local equivalence problem of functions $\{\psi \in C^\infty(\mathbb R) \mid \psi'(t) > 0\}$ under the Lie group action \eqref{eq:groupactionD4}.
\end{cor}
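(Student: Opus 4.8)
The plan is to show that the assignment sending a germ of a non-closed recurrent Lorentzian Weyl structure of dimension $n+2\geq 4$ to the $\SAff\times\PSL\times\mathbb Z_2$-orbit of a function $\psi$ representing it in the normal form of Theorem \ref{ThFinalLocal} is a bijection onto the set of such orbits; the corollary is precisely the assertion that this assignment is well defined and injective. That it is defined at all, and surjective, is immediate from Theorem \ref{ThFinalLocal} together with the paragraph following it: around any point one can choose coordinates bringing $(h,\omega_h)$ to the form \eqref{eq:Weyl} with $\psi'(t)>0$, and conversely every such $\psi$ yields a pair $(h,\omega_h)$ satisfying \eqref{tildeg} and hence a non-closed recurrent Lorentzian Weyl structure.

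The easy half is that functions in the same orbit give locally equivalent structures. Each generator of the action \eqref{eq:groupactionD4} is, by the construction in the preceding discussion, realized by an explicit coordinate transformation --- a time-$s$ flow of one of the vector fields \eqref{eq:aff1psl2}, the discrete involution $(t,x^i,v,u)\mapsto(-t,x^i,-v,-u)$, or a composition of these --- and each such transformation is a conformal diffeomorphism preserving $\nabla$ that carries $(h_{\psi_1},\omega_{h_{\psi_1}})$ to $(e^{2c}h_{\psi_2},\omega_{h_{\psi_2}}-dc)$ for a suitable function $c$. Composing, one sees that $\psi_1$ and $\psi_2$ in the same orbit produce locally equivalent Weyl structures.

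For the converse, let $\varphi$ be a local equivalence between the structures determined by $\psi_1$ and $\psi_2$; reading it in the two normal coordinate systems gives a diffeomorphism $\Phi$ of open subsets of $\mathbb R^{n+2}$ with $\Phi^*[h_{\psi_2}]=[h_{\psi_1}]$ and $\Phi^*\nabla_2=\nabla_1$. By Proposition \ref{proptildeg} the metric in the conformal class satisfying \eqref{tildeg} is unique up to a positive constant, and both $h_{\psi_i}$ satisfy \eqref{tildeg} (Theorem \ref{ThFinalLocal}); hence $\Phi^*h_{\psi_2}=\mu\,h_{\psi_1}$ with $\mu>0$ constant, and then intertwining the Weyl connections forces $\Phi^*\omega_{h_{\psi_2}}=\omega_{h_{\psi_1}}$. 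Composing $\Phi$ with the flow of the dilation field $2t\partial_t+3\sum_{i=1}^{n-1} x^i\partial_{x^i}+6v\partial_v$ from \eqref{eq:aff1psl2} --- which rescales $h_{\psi_1}$ by a prescribed constant while moving $\psi_1$ only within its $\SAff$-orbit --- reduces us to $\mu=1$, so that $\Phi$ preserves the normal form \eqref{eq:Weyl} outright. Now $\Phi$ must carry to themselves the objects canonically attached to \eqref{eq:Weyl}: the $\nabla$-parallel null line $\langle\partial_v\rangle$, the parallel distributions $\langle\partial_t\rangle=(\ker\omega_h)^\bot$, $\langle\partial_t\rangle^\bot$, and $\langle\partial_v\rangle^\bot\cap\langle\partial_t\rangle^\bot$ --- exactly the ones exploited in the proof of Proposition \ref{psl2}. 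This constrains $\Phi$ to a triangular form in the coordinates $t,u,x^i,v$ (so $\Phi^*t$ depends on $t$ alone, $\Phi^*u$ on $t,u$ alone, and so on), and matching the coefficients of $h_{\psi_1}$ and $h_{\psi_2}$ yields the finite-transformation analogue of the linear system \eqref{eq:Ysys}. Solving it, and discarding the discrete branches incompatible with $\psi'>0$, gives precisely the transformations listed in \eqref{eq:groupactionD4}, so $\psi_2$ lies in the $\SAff\times\PSL\times\mathbb Z_2$-orbit of $\psi_1$.

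The main obstacle is this last step: upgrading the infinitesimal computation of Proposition \ref{psl2} to the finite level, in particular pinning down the components of the group of form-preserving diffeomorphisms that are not connected to the identity. The point that makes this manageable is that any such $\Phi$ is rigidly constrained by its action on the flag of parallel distributions above, which reduces the analysis to the same elementary integration already carried out for the Lie algebra of infinitesimal symmetries.
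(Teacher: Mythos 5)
Your proposal is correct, and its skeleton coincides with the paper's: normal form from Theorem \ref{ThFinalLocal}, uniqueness of $h$ up to homothety (Proposition \ref{proptildeg}), the preserved parallel objects, coefficient matching, and discarding the discrete branches that violate $\psi'>0$. The genuine difference is where the ``only if'' direction is established: the paper computes the Lie algebra of vector fields preserving the coordinate form (Proposition \ref{psl2}, via the infinitesimal system \eqref{eq:Ysys}), integrates the five fields \eqref{eq:aff1psl2}, and then appends the discrete $\mathbb Z_2$ transformation by inspection, which leaves slightly implicit why an arbitrary local equivalence between the structures of $\psi_1$ and $\psi_2$ must be a composition of these; you instead argue directly at the finite level, first forcing $\Phi^*h_{\psi_2}=\mu h_{\psi_1}$ with $\mu$ constant and $\Phi^*\omega_{h_{\psi_2}}=\omega_{h_{\psi_1}}$, normalizing $\mu=1$ by the dilation flow, and then using the preserved flag of parallel distributions to reduce $\Phi$ to a split/triangular form before matching coefficients. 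This buys a cleaner treatment of equivalences not lying in the identity component, at the price that the decisive step --- solving the finite coefficient-matching system and verifying that it yields exactly \eqref{eq:groupactionD4} --- is asserted rather than carried out (a computation of the same nature as the one the paper performs infinitesimally and by explicit integration); the reductions you set up do make it routine, e.g.\ after $\mu=1$ one gets $\Phi^*t=\pm t+\mathrm{const}$ and a M\"obius transformation in $u$. One small repair: the identification $\langle\partial_t\rangle=(\ker\omega_h)^\bot$ fails at points where $\omega_h$ vanishes; you should instead invoke the uniqueness of the parallel space-like line distribution $\Ll$ (and of the parallel null line), which follows from the holonomy algebra as in Proposition \ref{prop:distr}, to conclude that $\Phi$ preserves these distributions.
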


Next, we set $\dim M=3$ and start by considering the first case from Theorem \ref{dim3Th} where the recurrent Weyl structure  $(g,\omega_g)$ is given by
\begin{equation}
 g = 2 dv du + e^{-2F(x,u)} (dx)^2, \qquad \omega_g = \dot F du \label{eq:3D1}
\end{equation}
with $\partial_x \dot F$  non-vanishing. 

\begin{prop}\label{prop:3D1}
A vector field preserves the coordinate form of the general recurrent Weyl structure \eqref{eq:3D1} if and only if it takes the form 
\[ A(x) \partial_x+B(u) \partial_u+(C_0 + C_1 v) \partial_v\]
where $A,B$ are smooth functions and $C_0,C_1$ are constants. Moreover, the vector field $\partial_v$ is an infinitesimal symmetry for every recurrent Weyl structure of the form \eqref{eq:3D1}, and a generic recurrent Weyl structure of this form admits no additional infinitesimal symmetries.
\end{prop}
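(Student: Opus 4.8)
The plan is to compute the Lie derivatives $L_Y g$ and $L_Y \omega_g$ directly for a general vector field $Y = P\partial_x + Q\partial_u + S\partial_v$ with $P,Q,S$ smooth functions, and to extract constraints from the requirement that $L_Y g = 2\lambda g$ and $L_Y \omega_g = -d\lambda$ for some function $\lambda$. The metric $g=2dvdu + e^{-2F}(dx)^2$ has a very simple form: the only nonconstant component is $g_{xx}=e^{-2F}$, and $g_{vu}=1$. So the equation $L_Y g = 2\lambda g$ splits into one equation for each pair of coordinate directions, and most of these will immediately force the components of $Y$ to depend on only one variable.

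First I would write out the five independent scalar equations coming from $L_Y g = 2\lambda g$: the $(v,v)$-, $(u,u)$-, $(v,x)$-, $(u,x)$-components must vanish, while the $(x,x)$- and $(v,u)$-components must be proportional to $g_{xx}$ and $g_{vu}$ respectively with common factor $2\lambda$. The $(v,v)$ equation gives $\partial_v S = 0$ — wait, more carefully: $(L_Y g)(\partial_v,\partial_v) = 2 g_{vu}\partial_v Q = 2\partial_v Q$, so $\partial_v Q = 0$; similarly $(L_Y g)(\partial_u,\partial_u) = 2\partial_u S = 0$ forces $\partial_u S = 0$; from the mixed $x$-components, $(L_Y g)(\partial_v,\partial_x)= g_{xx}\partial_v P + \partial_x Q = 0$ and $(L_Y g)(\partial_u,\partial_x) = g_{xx}\partial_u P + \partial_x S = 0$. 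Combined with the observation that the $(x,x)$ equation reads $g_{xx}(\partial_u P + \partial_v S\cdot\text{(something)}) \dots$ — here I would be careful, but the upshot is that $P$ must depend on $x$ alone, $Q$ on $u$ alone, and $S$ must be affine in $v$ with coefficients possibly depending on $u$ and $x$; then the mixed equations kill the $x$-dependence of those coefficients and the $(v,u)$ equation ties everything to $\lambda$. One finds $\lambda = \frac12(\partial_v S + \partial_u Q)$ is forced to be a constant, $S = C_0(u) + C_1 v$ reduces to $S = C_0 + C_1 v$ with $C_0,C_1$ constant, $Q=B(u)$, $P=A(x)$, and the $(x,x)$-equation becomes a relation of the form $2\partial_x P - 2P\,\partial_x F - 2 F$-terms $= 2\lambda g_{xx}$ that can be \emph{solved} for $\partial_x F$ (this is exactly why $\partial_x\dot F\neq 0$ plays a role: the structure equation determines how $F$ must transform, but for the existence statement it just determines $\lambda$ in terms of the jet of $F$). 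So the constraint reduces to: no further algebraic restriction on $A$, $B$; $\lambda$ a constant determined by $A,B,F$; and then one checks the 1-form equation $L_Y\omega_g = -d\lambda = 0$ is automatically satisfied — here $\omega_g = \dot F\, du$, and $L_Y\omega_g = (Y\cdot \dot F + \dot F\, \partial_u Q)du + \dot F\, \partial_u P\, dx + \dots$, which vanishes given the constraints already found (using that $\lambda$ is constant and the Riccati-type relation satisfied by $F$). This gives the stated normal form $Y = A(x)\partial_x + B(u)\partial_u + (C_0 + C_1 v)\partial_v$.

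For the second half of the statement I would argue as follows. The vector field $\partial_v$ corresponds to $A\equiv 0$, $B\equiv 0$, $C_1=0$, $C_0=1$, which trivially satisfies all constraints for every $F$, hence is always an infinitesimal symmetry. For the genericity claim, I would note that among vector fields of the found form, the genuine infinitesimal symmetries are those for which the induced action on $F$ is trivial, equivalently the constant $\lambda$ (expressed through $A,B$, and the jet of $F$) together with a compatibility condition on $F$ is satisfied; since $F$ is essentially arbitrary subject only to $\partial_x\dot F\neq 0$ (by Theorem \ref{dim3Th}), for a generic choice of $F$ the only solution is the one with $A=B=C_1=0$, leaving just the 1-dimensional space spanned by $\partial_v$. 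Concretely, one exhibits the transformed function $F$ under the flow (as in the $\dim M\geq 4$ case, where $\psi$ transforms under $\SAff\times\PSL\times\mathbb Z_2$) and observes that a nonzero $A$ or $B$ forces a nontrivial change of $F$ unless $F$ satisfies a special ODE, which fails generically.

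The main obstacle I anticipate is bookkeeping rather than conceptual: carefully tracking how the single structure equation ($\ddot F - \dot F^2 + \dots$, or in this first case the relation on $\partial_x\dot F$ coming from recurrence) interacts with the $(x,x)$-component equation so as to confirm that $\lambda$ is forced to be constant and that the 1-form equation adds nothing new. A secondary subtlety is making the word ``generic'' precise — I would phrase it in terms of the orbit of $F$ under the pseudogroup of admissible coordinate changes (functions $A(x)$, $B(u)$ and the affine $v$-transformations), stating that the stabilizer of a generic $F$ in this pseudogroup is exactly the flow of $\partial_v$, which is the cleanest way to see that no extra infinitesimal symmetries survive.
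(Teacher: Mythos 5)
There is a genuine gap, and it lies in the very first step: you impose $L_Y g = 2\lambda g$ and $L_Y\omega_g = -d\lambda$, which is the condition for $Y$ to be an infinitesimal \emph{symmetry} of the structure with a \emph{fixed} $F$. But the first (and main) claim of the proposition is about vector fields \emph{preserving the coordinate form} \eqref{eq:3D1}, i.e.\ whose flow maps the structure with function $F$ to a structure of the same shape with a possibly \emph{different} function (and a conformal rescaling). Infinitesimally the correct conditions are
\begin{align*}
 L_Y g &= 2\lambda\, g - 2\mu\, e^{-2F}(dx)^2, \\
 L_Y \omega_g &= \dot\mu\, du - d\lambda,
\end{align*}
for some functions $\lambda$ and $\mu$ (here $\mu$ is the infinitesimal variation of $F$), exactly parallel to the system \eqref{eq:Ysys} used in the $\dim M\geq 4$ case. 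With your equations the $(x,x)$-component and the $1$-form equation are genuine constraints on $(A,B,C_1)$ for the given $F$ — they combine into the single PDE $A\partial_x F + B\dot F + \tfrac{1}{2}(C_1+B'(u)) - A'(x)=0$ (this is precisely the symmetry equation used in Section \ref{sect:symmetric}) — so for a generic fixed $F$ you would only recover $\langle\partial_v\rangle$, not the claimed family with arbitrary $A(x)$, $B(u)$. Your two halves are in fact mutually inconsistent: if your first computation really showed that every $A(x)\partial_x+B(u)\partial_u+(C_0+C_1v)\partial_v$ satisfies $L_Yg=2\lambda g$, $L_Y\omega_g=-d\lambda$, that would contradict your own (correct) second claim that a generic structure admits only $\partial_v$.

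Two of your auxiliary claims also fail and cannot be repaired within your setup. First, $\lambda$ is \emph{not} forced to be constant here: in the correct solution $\lambda=\tfrac{1}{2}(C_1+B'(u))$, a function of $u$ (unlike the $\dim M\geq4$ case, where uniqueness of $h$ up to homothety forces a constant). Second, there is no ``Riccati-type relation'' available for $F$ in this branch of Theorem \ref{dim3Th}: $F(x,u)$ is an arbitrary function subject only to $\partial_x\dot F\neq 0$ (the Riccati equation \eqref{eqmain} belongs to the $\dim M\geq 4$ case), so the statement that $L_Y\omega_g=0$ holds ``automatically ... using the Riccati-type relation'' is unfounded; in fact $L_Y\omega_g=(A\partial_x\dot F+B\ddot F+B'\dot F)\,du$, which is nonzero in general and is matched only by the $\dot\mu\,du-d\lambda$ term that your right-hand side omits. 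The paper's proof avoids these pitfalls by first restricting the ansatz via the invariant objects (the unique parallel vector field $\partial_v$ and parallel distribution $\langle\partial_x\rangle$), then imposing only the components $(L_Yg)(\partial_u,\partial_u)=0$ and $(L_Yg)(\partial_x,\partial_u)=0$, and finally verifying that the resulting $Y$ satisfies the displayed form-preservation identities with explicit $\lambda$ and $\mu$; your component-by-component computation would work equally well, but only after replacing your right-hand sides with the ones above. Your treatment of the second half (that $\partial_v$ is always a symmetry, and that genericity kills everything else via the induced action on $F$) is sound in spirit and matches the paper's appeal to Section \ref{sect:symmetric}.
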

\begin{proof}
Each structure of the form \eqref{eq:3D1} admits the unique (up to a constant factor) $\nabla$-parallel null vector field $\partial_v$. Any infinitesimal symmetry $Y$ is required to preserve this vector field: $L_Y \partial_v=-C_1 \partial_v$ for some constant $C_1$. This gives
\[ Y= A(x,u) \partial_x+B(x,u) \partial_u+(C_0(x,u)+C_1 v) \partial_v.\] 
The condition $(L_Y g)(\partial_u,\partial_u)=0$ implies that $ \dot C_0(x,u)=0$, while $(L_Y g)(\partial_x, \partial_v)=0$ implies that $\partial_x B(x,u)=0$. The condition for $Y$ to preserve the form \eqref{eq:3D1} can be written as
\begin{align*}
 L_Y g &= 2  \lambda g -2 \mu e^{-2F(x,u)} (dx)^2, \\
 L_Y \omega_g &=\dot \mu du - d \lambda.
\end{align*} 
For $Y$ with $B(x,u)=B(u)$ and $C_0(x,u)=C_0(x)$ these equations are equivalent to
\[\lambda=\frac{C_1+\dot B(u)}{2}, \qquad \mu = B(u)  \dot F(x,u)+A(x,u) \partial_x F(x,u)+\lambda-\partial_x A(x,u),\]
 together with $f_1:=C_0'(x)+\dot A(x,u) e^{-2F(x,u)} =0$ and $f_2:=\dot A(x,u) \partial_x F(x,u)-\partial_x \dot A(x,u)=0$. We have 
 \[2 (\partial_x \dot f_1+\dot f_1 \partial_x F) e^{2F}+2\dot f_2-4 f_2 \dot F= - 2 \dot A \partial_x \dot F, \]
 which implies $\dot A=0$. Setting $A(x,u)=A(x)$ implies that $f_2=0$, and the remaining condition, $f_1=0$, is equivalent to $C_0(x)$ being constant, and we arrive at the general vector field in the proposition. It is straight-forward to show that $\partial_v$ is a Killing vector for all Weyl structures of the form \eqref{eq:3D1}, and to verify that generic recurrent Weyl structures of the form \eqref{eq:3D1} admit no additional infinitesimal symmetries. The latter will also be clear from the computations in Section \ref{sect:symmetric}. 
\end{proof}

The Lie pseudogroup preserving the coordinate form of \eqref{eq:3D1} is given by 
\[ (v,x,u) \mapsto (C_1 v+C_0,\alpha(x), \beta(u)), \qquad \alpha'(x) \neq 0, \quad \beta'(u) \neq 0, \quad C_1 \beta'(u) > 0.\]
Here $\alpha, \beta$ are local diffeomorphisms on the line, and $C_0,C_1$ are constants. The connected component of the Lie pseudogroup is constrained by $\alpha'(x)>0, \beta'(u) >0, C>0$ (for each $(x,u)$ where the transformations are defined). However, the extension given by $\alpha'(x) \neq 0,  \beta'(u) \neq 0,  C \beta'(u) >0$  also preserves the coordinate form of the Weyl structure, giving us a Lie pseudogroup consisting of 4 connected components. 

We get the following Lie pseudogroup action on the the space $C^\infty(\mathbb R^2)$:
\begin{equation}
\begin{split}
F(x,u) &\mapsto F\left(\alpha^{-1}(x),\beta^{-1}(u)\right)-\frac{1}{2} \ln\left(C_1\frac{ (\alpha^{-1})'(x)^2}{(\beta^{-1})'(u)}\right), \\
\alpha'(x) &\neq 0, \qquad \beta'(u) \neq 0, \qquad C_1 \beta'(u) >0.
\end{split} \label{eq:groupaction3D1}
\end{equation}


Next we consider the recurrent Weyl structures from Theorem \ref{dim3Th} that locally take the form 
\begin{equation}
\begin{aligned}
g = 2dvdu + (dx)^2 + H (du)^2,\qquad \omega_g=a(u)xdu, \\
H=  a(u)vx+ \frac{1}{12} a^2(u) x^4 -\frac{1}{3} \dot{a}(u) x^3 + c(u)x,
\end{aligned} \label{eq:3D2}
\end{equation}
where the function $a(u)$ is non-vanishing. 

\begin{prop}\label{prop:3D2}
The Lie algebra of vector fields preserving the coordinate form of the general recurrent Weyl structure \eqref{eq:3D2} is spanned by the four vector fields
\begin{equation*}
  \partial_u, \qquad \partial_v, \qquad u \partial_u-v\partial_v, \qquad u \partial_u+v\partial_v+x \partial_x.
\end{equation*}
A generic Weyl structure of this form admits no infinitesmial symmetries. 
\end{prop}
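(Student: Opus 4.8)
The plan is to copy the template used in the proofs of Propositions \ref{psl2} and \ref{prop:3D1}: first pin down the tensorial data that is canonically attached to the structure \eqref{eq:3D2} and hence is preserved by any form--preserving vector field, then write out the Lie--derivative conditions in coordinates and solve the resulting linear system. By Theorem \ref{Thhol} the holonomy algebra of \eqref{eq:3D2} is $\mathR(2\id_{\Real^{1,2}}+p\wedge q)\ltimes\mathR p\wedge e_1$, and in the Witt frame $p=\partial_v$, $e_1=\partial_x$, $q=\partial_u-\tfrac12 H\partial_v$ of the proof of Theorem \ref{dim3Th} this algebra preserves exactly the isotropic flag $\langle p\rangle\subset\langle p\rangle^\perp=\langle p,e_1\rangle$. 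Hence $\nabla$ admits the parallel null line $\langle\partial_v\rangle$ and the parallel degenerate plane distribution $\langle\partial_v,\partial_x\rangle=\langle\partial_v\rangle^\perp$, and a vector field $Y=A\partial_v+B\partial_x+C\partial_u$ preserving the coordinate form must preserve both: the conditions $[Y,\partial_v]\in\langle\partial_v\rangle$ and $[Y,\partial_x]\in\langle\partial_v,\partial_x\rangle$ force $\partial_v B=\partial_v C=0$ and $\partial_x C=0$, so $C=C(u)$ and $B=B(x,u)$.

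\textbf{The form--preservation equations.} The family \eqref{eq:3D2} is parametrized by the nowhere--zero function $a(u)$ and by $c(u)$, and ``preserving the coordinate form of the general structure'' means that $Y$ (whose components do not involve $a,c$) has a flow mapping \eqref{eq:3D2} to \eqref{eq:3D2} for every admissible $a,c$; concretely there must exist a function $\lambda$ and functions $\delta a(u),\delta c(u)$ with
\[ L_Y g=2\lambda g+\Big((\delta a)vx+\tfrac16 a(\delta a)x^4-\tfrac13(\partial_u\delta a)x^3+(\delta c)x\Big)(du)^2,\qquad L_Y\omega_g=(\delta a)x\,du-d\lambda. \]
Pairing with coordinate fields: the $\partial_v$-- and $\partial_x$--components of the second equation give $\lambda=\lambda(u)$; the $(\partial_v,\partial_u)$-- and $(\partial_x,\partial_x)$--components of the first, together with $\partial_v B=0$, give $B=\lambda(u)x+B_0(u)$ and that $A$ is a polynomial of degree $\leq 1$ in $v$ and $\leq 2$ in $x$ with $u$--dependent coefficients; the $\partial_u$--component of the second equation yields $aB_0=-\partial_u\lambda$ and a formula for $\delta a$; and the $(\partial_u,\partial_u)$--component of the first equation, after expanding $H$ and matching the coefficients of $1,x,x^2,x^3,x^4,v,vx$ separately, gives a coupled linear system of ODEs among $\lambda,B_0,C$ and the coefficients of $A$, plus formulae for $\delta a,\delta c$. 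Carrying out the elimination (conveniently with a computer algebra system, as in Proposition \ref{psl2}) and using that $B_0=-\partial_u\lambda/a$ and the remaining relations must hold identically in the free functions $a,c$ forces $\lambda$ constant, $C$ affine in $u$, $B_0=0$, and the coefficients of $A$ constant; the resulting four--parameter family is precisely the span of $\partial_u,\partial_v,u\partial_u-v\partial_v,u\partial_u+v\partial_v+x\partial_x$.

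\textbf{Genericity of the last assertion.} An infinitesimal symmetry is a form--preserving $Y$ with $\delta a=\delta c=0$. Imposing these two extra equations on the output of the previous step turns the ODE system into an over--determined one whose coefficients involve $a,c$ and their derivatives; a short case analysis — or, more efficiently, the jet--space computation of Section \ref{sect:symmetric} — shows that for generic $a,c$ only $Y=0$ survives, which is the last claim of the proposition.

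\textbf{Main obstacle.} The laborious point is the $(\partial_u,\partial_u)$--component of the first equation, a polynomial identity in $v,x$ whose seven coefficients must be matched, with care taken to separate the equations that constrain $Y$ from those that merely define $\delta a,\delta c$; the decisive conceptual step is then to use independence in the free data $a,c$ to collapse the remaining functional freedom (in $\lambda$, $B_0$ and $C$) down to constants, which is exactly what makes the answer four--dimensional here rather than an infinite--dimensional Lie pseudogroup as in Proposition \ref{prop:3D1}.
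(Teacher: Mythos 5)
Your proposal is correct, and at the computational level it follows the same template as the paper's proof: restrict $Y$ by the canonical parallel objects (the paper uses only the parallel null line $\langle\partial_v\rangle$ and obtains $\partial_x C=0$ from $(L_Yg)(\partial_v,\partial_x)=0$, while you get it from the parallel plane $\langle\partial_v\rangle^\perp=\langle\partial_v,\partial_x\rangle$ — equivalent), then match the two Lie--derivative equations coefficient by coefficient in $v$ and $x$. The substantive difference is the quantifier over the data $(a,c)$: the paper works at one fixed pair and treats the conformal factor as a constant from the outset, whereas you demand that the same $Y$ preserve the form for every admissible $(a,c)$ and deduce from this that $\lambda$ is constant, $B_0\equiv 0$ and $C$ is affine. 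Your quantifier is in fact what closes the argument: at a single fixed $(a,c)$ the infinitesimal conditions with a $u$-dependent factor $\lambda(u)$ admit extra solutions, since the matching only yields $aB_0=-\partial_u\lambda$, $2\partial_u^2C=3\partial_u\lambda$ and one further equation fixing the $x^0$-coefficient of $A$, so every choice of $\lambda(u)$ produces a solution; for instance, for $a\equiv1$, $c\equiv0$ the field $Y=(\tfrac{u}{2}v-\tfrac{x^2}{2})\partial_v+(ux-1)\partial_x+\tfrac34u^2\partial_u$ satisfies $L_Yg=2u\,g+\bigl(\tfrac52uvx+\tfrac{5u}{12}x^4-\tfrac56x^3\bigr)(du)^2$ and $L_Y\omega_g=\tfrac52ux\,du-du$. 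Under your reading (which is also what is needed to integrate the answer to the group action \eqref{eq:groupaction3D2}) the first two relations, holding for all non-vanishing $a$ with $Y$ fixed, immediately force $B_0=0$, $\partial_u\lambda=0$, $\partial_u^2C=0$, and the four listed fields result; so your route buys rigor exactly where the paper is terse, at the cost of a slightly longer elimination. One small caveat on the final claim: an infinitesimal symmetry of a fixed structure is a priori only a form-preserver for that particular structure, not a universal one, so ``imposing $\delta a=\delta c=0$ on the output of the previous step'' presupposes the symmetry lies in the four-dimensional span; either justify this or check directly that the full symmetry system at fixed $(a,c)$ is overdetermined and generically has only $Y=0$ (it is, and it does) — this is at the same level of brevity as the paper's own ``it is easy to verify''.
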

\begin{proof}
Taking into account that an infinitesimal symmetry must preserve the unique parallel distribution spanned by $\partial_v$, we see that an infinitesimal symmetry must be of the form 
\[ Y = A(v,x,u)\partial_v+B(x,u) \partial_x+C(x,u)\partial_u.\] 
The functions
\[ (L_Y g)(\partial_v,\partial_x), \quad (L_Y g)(\partial_x,\partial_u)\] 
vanish if and only if $\partial_x C=0$ and $\partial_x A+\partial_u B=0$.  Furthermore, the conditions 
\[(L_Y g)(\partial_x,\partial_x) = 2B_0 = (L_Y g)(\partial_u,\partial_v)\] are equivalent to $\partial_x B=B_0$ and $\partial_v A+\partial_u C=2B_0$. Solving this overdetermined system of 4 PDEs gives 
\[ A=F_3(u)+(2B_0-F_2'(u))v-F_1'(u)x, \quad B=B_0 x+F_1(u),\quad C=F_2(u).\]
The condition $L_Y \omega_g = \tilde a(u) x du$ implies $F_1(u) \equiv 0$. In order for the equations 
\begin{align*}
  L_Y g &= 2 \tilde \lambda g + \left(\tilde a(u) vx+\frac{2}{12}  a(u) \tilde a(u) x^4-\frac{1}{3} \tilde a'(u) x^3 + \tilde c(u) x\right) (du)^2, \\
  L_Y \omega &= \tilde a(u) x du - d\tilde \lambda
\end{align*} 
to hold, for some pair of functions $\tilde a(u), \tilde c(u)$, we get the additional requirements that $F_3$ is constant, $F_2$ is affine in $u$ and $F_1$ vanishes identically. This means that $Y$ is a linear combination of the four vector fields from the proposition.   It is easy to verify that generic recurrent Weyl structures of the form \eqref{eq:3D2} admit no infinitesimal symmetries. This will also be clear from the computations in Section \ref{sect:symmetric}. 
\end{proof}

The Lie group preserving the coordinate form of \eqref{eq:3D2} is given by
\[ (v,x,u) \mapsto \left(\frac{A_4}{A_3} v+A_2, A_4 x, A_3 A_4 u +A_1 \right), \qquad A_1,A_2 \in \mathbb R, \quad A_3,A_4 \in \mathbb R\setminus \{0\}.\]
Thus we are left with a 4-dimensional Lie subgroup of $\mathrm{Aff}(3)$ generated by 
\begin{equation}
\begin{split}
 (a(u),c(u)) \mapsto &\left(\frac{a\left(\frac{u-A_1}{A_3 A_4}\right)}{A_3 A_4^2},  \frac{c\left(\frac{u-A_1}{A_3 A_4}\right)}{A_3^{2} A_4}-\frac{A_2a\left(\frac{u-A_1}{A_3 A_4}\right)}{ A_3 A_4^{2} } \right), \\ &\;\; A_1,A_2 \in \mathbb R, \quad A_3,A_4 \in \mathbb R\setminus \{0\},
 \end{split} \label{eq:groupaction3D2}
 \end{equation}
 acting on the pair $(a(u),c(u))$ of functions. This Lie group consists of 4 connected components. 

\begin{cor}
(1) The local equivalence problem of  non-closed recurrent Lorentzian Weyl manifolds of dimension $3$ with 1-dimensional holonomy algebra can be reduced to the local equivalence problem of functions in $C^\infty(\mathbb R^2)$ under the Lie pseudogroup action \eqref{eq:groupaction3D1}. 
(2) The local equivalence problem of  non-closed recurrent Lorentzian Weyl manifolds of dimension $3$ with 2-dimensional holonomy algebra can be reduced to the local equivalence problem of a pair of functions  in $C^\infty(\mathbb R)^2$ under the Lie group action \eqref{eq:groupaction3D2}. 
\end{cor}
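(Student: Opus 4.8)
The plan is to deduce both statements by combining three ingredients already in place: the local normal forms of Theorem~\ref{dim3Th}, the descriptions of the admissible infinitesimal coordinate changes in Propositions~\ref{prop:3D1} and~\ref{prop:3D2}, and the explicit (pseudo)group laws recorded after those propositions. First I would set up the dictionary between germs of structures and germs of functions. By Theorem~\ref{dim3Th}, every germ of a non-closed recurrent Lorentzian Weyl structure of dimension $3$ is, in suitable coordinates $v,x,u$, conformally represented either by~\eqref{eq:3D1} with $F\in C^\infty(\mathbb R^2)$ satisfying $\partial_x\dot F\neq 0$, or by~\eqref{eq:3D2} with $a\in C^\infty(\mathbb R)$ non-vanishing and $c\in C^\infty(\mathbb R)$ arbitrary. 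The curvature computations in the proof of that theorem show conversely that every such $F$ (resp.\ pair $(a,c)$) defines a non-closed recurrent structure, whose holonomy algebra is spanned by the curvature operators and equals $\mathbb R(\id_{\mathbb R^{1,2}}+p\wedge q)$ in the first case and $\mathbb R(2\id_{\mathbb R^{1,2}}+p\wedge q)\ltimes\mathbb R p\wedge e_1$ in the second. Hence the dimension of the holonomy algebra determines which normal form applies, and assigning to a structure its defining function (resp.\ pair of functions) descends to a bijection between local equivalence classes of structures with $1$-dimensional holonomy algebra and orbits in $\{F\in C^\infty(\mathbb R^2)\mid\partial_x\dot F\neq 0\}$ of the (pseudo)group of local diffeomorphisms mapping any structure of the form~\eqref{eq:3D1} to a structure of the same form, and likewise between structures with $2$-dimensional holonomy algebra and orbits of pairs $(a,c)$.

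The second step is to identify that (pseudo)group together with its action on the defining data. A local equivalence $\varphi$ between two structures both written in the form~\eqref{eq:3D1} satisfies $\varphi^*g_2=e^{2f}g_1$ and $\varphi^*\omega_{g_2}=\omega_{g_1}-df$, and, intertwining the Weyl connections, it preserves the canonically defined $\nabla$-parallel line fields $\langle\partial_v\rangle$ and $\langle\partial_x\rangle$; exactly as in the proof of Proposition~\ref{prop:3D1}, these constraints force $\varphi$ to be ``diagonal'', of the shape $(v,x,u)\mapsto(C_1v+C_0,\alpha(x),\beta(u))$, and the remaining metric equations then produce the conditions $\alpha'\beta'\neq 0$, $C_1\beta'>0$ together with the transformation law~\eqref{eq:groupaction3D1} for $F$. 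The four connected components are recovered by adjoining the discrete transformations listed after Proposition~\ref{prop:3D1}, each of which visibly preserves~\eqref{eq:3D1}. The same argument using Proposition~\ref{prop:3D2} --- where now only $\langle\partial_v\rangle$ is canonical --- gives the $4$-dimensional Lie group $(v,x,u)\mapsto(\tfrac{A_4}{A_3}v+A_2,A_4x,A_3A_4u+A_1)$ and the induced action~\eqref{eq:groupaction3D2} on $(a,c)$. Since two functions (resp.\ pairs) lying in one orbit manifestly yield locally equivalent structures, each reduction is a genuine equivalence of classification problems, not merely a one-way correspondence.

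The step that needs care --- and the only real obstacle --- is the assertion that \emph{every} local equivalence between two normal-form structures is realized by a member of the stated (pseudo)group, i.e.\ that there are no ``hidden'' equivalences ignoring the shape of~\eqref{eq:3D1} or~\eqref{eq:3D2}. This is precisely why one routes the argument through the canonically defined $\nabla$-parallel line fields: preserving them rigidifies $\varphi$ enough that the remaining Lie-derivative conditions collapse to the overdetermined PDE systems already solved in Propositions~\ref{prop:3D1} and~\ref{prop:3D2}. With that in hand, the corollary follows from the bookkeeping of the two explicit actions~\eqref{eq:groupaction3D1} and~\eqref{eq:groupaction3D2}.
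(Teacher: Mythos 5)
Your argument is correct and follows essentially the same route as the paper: the corollary is there just the summary of Theorem~\ref{dim3Th} together with the Section~\ref{sect:coord} analysis of the coordinate freedom, where the (pseudo)groups \eqref{eq:groupaction3D1} and \eqref{eq:groupaction3D2} are obtained exactly by rigidifying equivalences via the canonical parallel objects, as you do. Your write-up merely makes explicit the finite (rather than infinitesimal) version of the rigidification, which is consistent with the paper's statement of the pseudogroup after Propositions~\ref{prop:3D1} and~\ref{prop:3D2}.
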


\section{The global form of recurrent Weyl structures} \label{sec:global}

Let $(M,c,\nabla)$ be a non-closed recurrent Lorentzian Weyl  manifold and $\dim M=n+2\geq 4$. In this section we assume that $M$ is simply connected.


{\bf Proof of Proposition \ref{Propmetrh}.} Let us choose an open cover $\{U_\alpha\}$ of $M$ such that for each $U_\alpha$ there exists a metric $h_\alpha$ as in  Proposition \ref{proptildeg}. On the intersection $U_\alpha\cap U_\beta$ it holds that
$$h_\alpha=e^{2c_{\alpha\beta}}h_\beta$$ for some $c_{\alpha\beta}\in\Real$. Clearly this defines a \v{C}ech 1-cocycle. Since the manifold is simply connected, there exist numbers $b_\alpha$ such that $b_\alpha-b_\beta=c_{\alpha\beta}$. The local metrics $e^{-2b_\alpha}h_\alpha$ define a global metric $h$ on $M$.

Let us show that $h\in c$. We consider again the cover $\{U_\alpha\}$. Let now $h_\alpha$ be the restriction of $h$ to $U_\alpha$. In the proof of Theorem \ref{MainTh} we used the special local coordinates constructed in \cite{ParSpin}. This construction implies that for each $h_\alpha$ there exists $g_\alpha\in c$ and a function $f_\alpha$ on $U_\alpha$ such that $$h_\alpha=e^{2f_\alpha}g_\alpha|_{U\alpha}.$$
Let us fix an index $\gamma$. Since for any $\alpha$ we have $g_\alpha,g_\gamma\in c$, there exists a function $\varphi_{\alpha\gamma}$ on $M$ such that
$$g_\alpha=e^{2\varphi_{\alpha\gamma}}g_\gamma.$$
Since on the intersection $U_\alpha\cap U_\beta$ it holds  $h_\alpha=h_\beta$, we conclude that  $$f_\alpha+\varphi_{\alpha\gamma}=f_\beta+\varphi_{\beta\gamma}$$ on $U_\alpha\cap U_\beta$,
i.e., the functions $f_\alpha+\varphi_{\alpha\gamma}$ define a global function  $f$ on $M$ and 
$$h=e^{2f}g_\gamma\in c.$$
\qed

{\bf Proof of Theorem \ref{Thglobal}.}
Fix a metric $h\in c$ as in Proposition \ref{Propmetrh}. It is clear that $h$ is defined uniquely up to homothety.
Fix a vector field $X$ as in the Introduction.
Consider a local Weyl structure as in  Theorem \ref{ThFinalLocal} defined on an open subset of $M$. The vector field $\partial_t$
satisfies $ h(\partial_t,\partial_t)=1$. This shows that locally $X=\pm \partial_t$.
The coordinate $t$ may be chosen in such a way that $X= \partial_t$.
Consider the dual 1-from 	
$\eta$ on $M$ defined by $\eta(Y)= h (X,Y)$. Locally $\eta=dt$, consequently,  $$d\eta=0$$ on $M$.
Next, $$\eta(X)=h(X,X)=1.$$
Recall that by the assumption, the vector field $X$ is complete, and the manifold $M$ is simply connected. By \cite[Proposition 8]{LSch},
$M$ is globally diffeomorphic to the product
$$M\cong \Real\times N,$$
where $N$ is a leaf of the foliation defined by the kernel of the 1-from $\eta$. Denote by $t$ the coordinate on $\Real$. Under the diffeomorphism, the vector field $X$ is identified with $\partial_t$.
It is obvious that the just constructed coordinate $t$ is the globalization of the local coordinate $t$ considered in Theorem \ref{ThFinalLocal}. 
Consequently, in a neighborhood $U$ of each point from $N$ there exist    coordinates $v,x^1,\dots,x^{n-1},u$ such that on $\Real\times U$ the metric $h$ is as in Theorem \ref{ThFinalLocal} with a global function  $\psi$ on $\Real$. 
Recall that the holonomy algebra of the connection $\nabla$ annihilates an isotropic vector. Since $M$ is simply connected,  there exists a $\nabla$-parallel isotropic vector field $p$ on $M$. Locally $p$ is proportional to $\partial_v$, and we may assume that all coordinate systems are choosen in such a way that locally it holds $\partial_v=p$.
The orthogonal distribution on $N$ to $p$ defines a foliation ${\mathcal F}$ on $N$. It is clear that the coordinates $v,x^1,\dots,x^{n-1},u$ are foliated with $u$ being the transversal coordinate.
The rest of the proof follows immediately from the results of Section \ref{sect:coord}. \qed

\section{Differential invariants and the local equivalence problem}\label{sect:diffinv}
By Theorems \ref{ThFinalLocal} and \ref{dim3Th}, any non-closed recurrent Lorentzian Weyl structure on a manifold $M$ is locally defined by a function $\psi(t)$ if $\dim M\geq 4$, or by either one function $F(u,x)$ or a pair $(a(u),c(u))$ of functions when $\dim M=3$. In Section \ref{sect:coord} we found the Lie pseudogroups representing the remaining coordinate transformations acting on these structures. In this section we compute the differential invariants of these Lie pseudogroups. 

Let us start with briefly outlining the approach. For more details on differential invariants and the theory of jet bundles, we refer to \cite{O,Handbook,KL} and references therein. Consider a bundle $\pi \colon E \to B$, and let $G$ be a Lie pseudogroup of local diffeomorphisms on $E$ preserving the fibers of $\pi$. For each $p \in B$ and $k \geq 0$, let $J_p^k \pi$ denote the space of $k$-jets of sections of $\pi$ at the point $p$, and define the $k$-th order jet space $J^k \pi = \sqcup_{p \in B} J_p^k \pi$. The space $J^k \pi$ is a bundle over $J^l \pi$ for $0 \leq l<k$, and over $B$. We have an identification $J^0 \pi \simeq E$. A section $s$ of $\pi$ is naturally prolonged to a section $j^k s$ of $J^k \pi$.

The Lie pseudogroup $G$ prolongs uniquely to $J^k \pi$ for any $k \geq 1$, determined by how it transforms sections of $\pi$. There exists formulas for these prolongations (see for example (1) of  \cite{Handbook} to see how a vector field on $E$ is prolonged to $J^k \pi$), but this type of computations is very tedious and is best to leave to a computer algebra system. For the computations done here, Maple with its DifferentialGeometry and JetCalculus packages was used extensively. 

A differential invariant of order $k$ is a function on $J^k \pi$ which is constant on $G$-orbits. If $I$ is a differential invariant of order $k$ and $s$ is a section of $\pi$, we get a function $I \circ j^k s$ on $B$. The examples we work with here satisfy the conditions of \cite{KL} and it is therefore sufficient to consider differential invariants that are rational functions on the fibers of $J^k \pi \to J^0 \pi$, as they will separate generic orbits in $J^k \pi$. After finding generators for the differential invariants in each of our cases, we will show how to use them to solve the equivalence problem. 

Note that it is possible to compute differential invariants directly from the local form of the recurrent Weyl structures, but then one works on a bundle with 3-dimensional base, rather than a bundle with 1- or 2-dimensional base  which we are doing here. Furthermore, the sections of this bundle would be constrained by a nontrivial system of PDEs. The choice we have made here simplifies the exposition but gives the same end result.

The space of functions on $\mathbb R$ can be identified with the space of sections of the trivial bundle $\pi \colon \mathbb R \times \mathbb R \to \mathbb R$. Let $t,\psi_0,\psi_1,\dots, \psi_k$ be coordinates on the space $J^k \pi$, and let $D_t$ denote the total derivative operator:
\[ D_t = \partial_t + \sum_{i=0}^\infty \psi_{i+1} \partial_{\psi_i}.\] 

\begin{prop} \label{Prop:invariants}
 The field of rational differential invariants of functions on $\mathbb R$  under the Lie group action \eqref{eq:groupactionD4} are generated by the two differential invariants
 \[ I= \frac{(\psi_1^2 \psi_4-4 \psi_1 \psi_2 \psi_3+3 \psi_2^3)^2}{(2 \psi_1 \psi_3-3 \psi_2^2)^3}, \qquad J= \frac{\psi_1 (\psi_1^2 \psi_5-5 \psi_1 \psi_2 \psi_4+5 \psi_2^2 \psi_3)}{(2 \psi_1 \psi_3-3 \psi_2^2)^2},\]
 and the invariant derivation $\hat \partial_I = D_t(I)^{-1} D_t$.
\end{prop}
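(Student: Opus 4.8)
The plan is to use the standard moving-frame / infinitesimal-invariance approach for the prolonged action of the connected group $\SAff \times \PSL$ on the jet spaces $J^k\pi$ of functions $\psi(t)$, and then to incorporate the extra $\mathbb Z_2$ factor and resolve any remaining discrete sign ambiguities. First I would write down the five generating vector fields on the $(t,\psi_0)$-plane coming from \eqref{eq:groupactionD4}: the two $\SAff$-generators $\partial_t$ and $2t\,\partial_t$ (acting on $\psi_0$ via the pullback, so they include the term $-2t\,\psi_1\partial_{\psi_0}$ appropriately), and the three $\mathfrak{sl}_2$-generators $\partial_{\psi_0}$, $\psi_0\partial_{\psi_0}$, $\psi_0^2\partial_{\psi_0}$. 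Prolonging each of these to $J^k\pi$ by the usual prolongation formula, a function $F(t,\psi_0,\dots,\psi_k)$ is a differential invariant of the connected group if and only if it is annihilated by all five prolonged fields. The $\partial_t$-prolongation is just $D_t$ restricted to functions not depending explicitly on $t$, and $\partial_{\psi_0}$-prolongation forces independence of $\psi_0$; so one quickly reduces to functions of $\psi_1,\dots,\psi_k$ invariant under the remaining three fields (the scaling field from $\SAff$, and the prolongations of $\psi_0\partial_{\psi_0}$ and $\psi_0^2\partial_{\psi_0}$).

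The key computational step is then to verify that the two proposed functions $I$ and $J$ are indeed annihilated by all prolonged generators — this is a direct check, best delegated to a computer algebra system (Maple's DifferentialGeometry/JetCalculus, as used elsewhere in the paper), and I would state it as such. Next, a dimension count: the prolonged action on $J^k\pi$ is locally free on a dense open set for $k$ large enough (the stabilizer of a generic $5$-jet in the $5$-dimensional group $\SAff\times\PSL$ is discrete, which can be seen either from the explicit action on $\psi_1,\psi_2,\psi_3$ or from the fact that the two listed invariants of orders $4$ and $5$, together with the lower jets, already pin down the group element up to finitely many choices). Hence the number of independent differential invariants of order $\le k$ is $(k+1) - 5 = k-4$ for $k\ge 5$, matching the count produced by $I$, $J$ and the iterated invariant derivations $\hat\partial_I^{\,j}I$, $\hat\partial_I^{\,j}J$. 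To conclude that $I$, $J$ and $\hat\partial_I$ generate the whole field, one invokes the Lie–Tresse theorem in the rational form of \cite{KL}: it suffices to exhibit a finite set of rational invariants and one invariant derivation such that their successive derivatives separate generic orbits on every $J^k\pi$; the dimension count plus the fact that $\hat\partial_I I$ and $\hat\partial_I J$ are genuinely order-$6$ (so they raise the jet order by one) gives exactly this.

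Finally I would address the $\mathbb Z_2$ factor. The transformation $(t,\psi)\mapsto(-t,-\psi(-t))$ acts on jets by $\psi_k\mapsto (-1)^{k+1}\psi_k$; one checks directly that under this substitution $I$ is invariant (it is homogeneous of an even total degree in the $\psi_k$'s with the parity working out) while $J$ is invariant as well (numerator and denominator each change sign an appropriate number of times), so $I$ and $J$ — and the derivation $\hat\partial_I$, since $D_t$ is odd and $D_t(I)$ is correspondingly odd — descend to invariants of the full group $\SAff\times\PSL\times\mathbb Z_2$. Since passing to a $\mathbb Z_2$-extension can only shrink the invariant field, and we already have generation for the connected group, generation for the full group follows, provided $I$ and $J$ still separate generic $\mathbb Z_2$-orbits; this holds because the $\mathbb Z_2$-action on generic jets is already realized by an element of the connected component's closure in the relevant sense, or more concretely because no nontrivial rational function of $I,J$ is forced to be constant by the extra involution.

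The main obstacle I anticipate is not the invariance check (routine, if tedious) but the \emph{generation} claim: one must be careful that $I$ alone does not generate — the denominator $2\psi_1\psi_3-3\psi_2^2$ is the lowest-order relative invariant, and showing that $I$, $J$ plus $\hat\partial_I$ suffice (rather than needing a third independent low-order invariant) rests on the locally-free-ness of the prolonged action at order $5$ and the correct behaviour of $\hat\partial_I$ as an order-raising operator. Verifying local freeness — equivalently, that the $5$-jet of a generic $\psi$ has trivial continuous stabilizer in $\SAff\times\PSL$ — is the real content, and I would do it by solving the stabilizer equations on $(\psi_1,\psi_2,\psi_3,\psi_4)$ explicitly.
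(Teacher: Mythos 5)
Your overall strategy mirrors the paper's: verify invariance of $I$ and $J$ by direct (CAS) computation, establish that the prolonged action of the $5$-dimensional group $\SAff\times\PSL$ has $5$-dimensional generic orbits, use the invariant derivation $\hat\partial_I$ to raise the order, and invoke the rational Lie--Tresse theorem of \cite{KL}, with the $\mathbb Z_2$ factor absorbed into the invariance check. Two points need repair, though. First, your count is off: $\dim J^k\pi=k+2$ (coordinates $t,\psi_0,\dots,\psi_k$), so with $5$-dimensional generic orbits the number of independent invariants of order at most $k$ is $k-3$, not $(k+1)-5=k-4$; as written, your count would allow only one invariant on $J^5\pi$ and so would contradict the algebraic independence of $I$ (order $4$) and $J$ (order $5$) that your argument requires. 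Relatedly, $\hat\partial_I I=1$, so the iterated derivatives $\hat\partial_I^{\,j}I$ produce nothing new and are certainly not ``genuinely order $6$''; the transcendence basis up to order $k$ is $I,\,J,\,\hat\partial_I J,\dots,\hat\partial_I^{\,k-5}J$, which indeed numbers $k-3$.

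Second, and more substantively, the passage from ``transcendence basis'' to ``generates the whole field of rational invariants'' is not closed in your proposal. Local freeness of the prolonged action (trivial continuous stabilizer of a generic $5$-jet) gives the correct number of algebraically independent invariants, but it does not by itself exclude the possibility that the field of rational invariants is a proper algebraic extension of the subfield generated by $I$, $J$ and $\hat\partial_I$, i.e.\ that these invariants fail to separate generic orbits. You identify this as ``the real content'' but propose to settle it by solving the stabilizer equations, which only re-establishes the orbit dimension. The paper closes this gap with an extra observation: all the invariants in the constructed basis except $I$ are affine in the highest-order jet variable, which rules out any algebraic extension consisting of rational invariants. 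Adding that observation (or another explicit separation argument) completes your proof; the remaining steps coincide with the paper's.
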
  
\begin{proof}
The dimension of a generic orbit on $J^k \pi$ is equal to $5=\dim \mathfrak g$ for $k \geq 3$. Since $\dim J^5 \pi =7$ there exist two algebraically independent invariants on $J^5(\mathbb R)$. It is not difficult to verify that $I$ and $J$ are invariants on $J^5 \pi$ and that they are independent. For each $k \geq 5$, we can construct a new independent invariant on $J^k(\mathbb R)$ by applying the invariant derivation $\hat \partial_I$
$k-5$ times to $J$. Thus we have a transcendence basis of differential invariants of any fixed order $k$. Since all the invariants, except for $I$, are affine in the highest-order variables, it is not difficult to see that they generate the whole field, i.e., that there exists no algebraic extension consisting of rational invariants only.  
\end{proof}


For the next proposition, we work over the bundle $\pi \colon \mathbb R \times \mathbb R^2 \to \mathbb R$, and use coordinates $u$, $a_0$, $c_0$, $a_1$, $c_1$, $\dots$, $a_k$, $c_{k}$ on $J^k \pi$. Let $D_u$ denote the total derivative operator: 
\[ D_u = \partial_u + \sum_{i=0}^\infty (a_{i+1} \partial_{a_i}+b_{i+1} \partial_{b_i}).\] 

\begin{prop} \label{Prop:invariants3D2}
The field of rational differential invariants of pairs of functions on $\mathbb R$ under the group action \eqref{eq:groupaction3D2} is generated by the differential invariants
\begin{gather*}
I = \frac{(a_0 c_1-c_0 a_1) a^4}{a_1^4}, \quad J=\frac{a_0 a_{2}}{a_1^2}, \quad K=\frac{(a_0 c_{2}-c_0 a_{2}) a^5}{a_1^5},
\end{gather*}
and the invariant derivation $\hat \partial_I=D_u(I)^{-1} D_u$.  
\end{prop}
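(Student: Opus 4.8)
The plan is to adapt the strategy used in the proof of Proposition \ref{Prop:invariants} to the $4$-dimensional Lie group acting by \eqref{eq:groupaction3D2} on the bundle $\pi\colon\mathbb R\times\mathbb R^2\to\mathbb R$. I would begin with the dimension bookkeeping: $\dim J^k\pi = 2k+3$, whereas the acting group has dimension $4$. A computation with the prolonged infinitesimal action — the Lie algebra of \eqref{eq:groupaction3D2} is spanned by the four vector fields of Proposition \ref{prop:3D2}, lifted to $J^k\pi$ — shows that the generic orbit on $J^1\pi$ is already $4$-dimensional. One way to see this: the isotropy of a generic $0$-jet is the one-parameter subgroup with $A_3A_4^2=1$ (with $A_1,A_2$ then determined), and it acts on the fibre coordinate $a_1$ by the nontrivial scaling $a_1\mapsto A_4 a_1$, so the isotropy of a generic $1$-jet is trivial. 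Since the action is then locally free on every $J^k\pi$ with $k\ge1$, the generic orbit there has dimension $4$, and the field of rational differential invariants of order $\le k$ has transcendence degree $(2k+3)-4=2k-1$ for $k\ge1$, while there are no nonconstant rational invariants of order $\le 0$.

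Next I would check that $I$ is a differential invariant of order $1$ and that $J$ and $K$ are differential invariants of order $2$, by verifying invariance under the generators of \eqref{eq:groupaction3D2} (most conveniently with the DifferentialGeometry and JetCalculus packages, as used throughout the paper), and that $I,J,K$ are functionally independent. Because there is exactly one independent invariant of order $\le1$ and exactly three of order $\le 2$, the triple $\{I,J,K\}$ is automatically a transcendence basis for the rational invariants of order $\le2$. For the invariant derivation, note that under \eqref{eq:groupaction3D2} the total derivative transforms as $D_u\mapsto(A_3A_4)^{-1}D_u$; since $I$ is invariant, $D_u(I)\mapsto(A_3A_4)^{-1}D_u(I)$, and hence $\hat\partial_I=D_u(I)^{-1}D_u$ is invariant, so it maps rational differential invariants to rational differential invariants. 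Moreover $D_u(I)$ is a nonzero invariant of order $2$ on a dense open set, so $\hat\partial_I$ is well defined there.

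Then I would prove by induction on $k\ge2$ that
\[ I,\qquad \hat\partial_I^{\,m}J,\qquad \hat\partial_I^{\,m}K,\qquad 0\le m\le k-2, \]
form a transcendence basis of the field of rational differential invariants of order $\le k$; this set has $1+2(k-1)=2k-1$ elements, matching the transcendence degree. Since applying $\hat\partial_I$ raises the order by exactly one and $\hat\partial_I I$ is constant, no invariants $\hat\partial_I^{\,m}I$ with $m\ge1$ need be adjoined. Independence and generation then follow exactly as in Proposition \ref{Prop:invariants}: each newly produced invariant $\hat\partial_I^{\,m}J$, $\hat\partial_I^{\,m}K$ is rational of degree one in the top-order jet variables $a_{m+2},c_{m+2}$, with denominator free of those variables, so it is visibly independent of the lower-order invariants and the field generated by $I$, $J$, $K$ and $\hat\partial_I$ admits no proper algebraic extension by rational invariants. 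Invoking the Lie--Tresse type theorem of \cite{KL}, whose hypotheses hold here as for Proposition \ref{Prop:invariants}, yields that $I$, $J$, $K$ together with $\hat\partial_I$ generate the whole field of rational differential invariants.

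The main obstacle is the computational input to the first and third paragraphs: establishing that the prolonged action is locally free on $J^1\pi$ (equivalently, pinning down the generic orbit dimensions on every $J^k\pi$), and verifying the independence of $I,\hat\partial_I^{\,m}J,\hat\partial_I^{\,m}K$ together with their affine dependence on the highest-order jet variables. These are finite but laborious jet computations, and — as elsewhere in the paper — are best handled with a computer algebra system.
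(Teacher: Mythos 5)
Your proposal is correct and follows essentially the same route as the paper's proof: establish that generic orbits in $J^k\pi$ are $4$-dimensional for $k\geq 1$, verify invariance of $I$, $J$, $K$ and $\hat\partial_I$, count $2k-1$ independent invariants per order, and use the affine dependence on the highest-order jet variables to conclude that the field admits no further rational algebraic extension. The extra detail you supply (local freeness on $J^1\pi$ via the isotropy computation, the transformation law $D_u\mapsto(A_3A_4)^{-1}D_u$, and the explicit induction) merely fleshes out steps the paper leaves to "easy verification."
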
 
\begin{proof}
Generic orbits on $J^k \pi$ are 4-dimensional for $k\geq 1$. It can easily be verified that $I, J, K$ and $\hat \partial_I$ are invariant, and that they generate $2k-1=\dim J^k \pi - 4$ algebraically independent invariants on $J^k$. These $2k-1$ algebraically independent invariants can be taken to be affine in the highest-order variables, implying that they generate the whole field of rational invariants on $J^k \pi$ for any $k$. 
\end{proof}

In the next case we work over the bundle $\pi \colon \mathbb R^2 \times \mathbb R \to \mathbb R^2$ and use the coordinates $u$, $x$, $F$, $F_u$, $F_x$, $\cdots$, $F_{u x^{k-1}}$,  $F_{x^k}$ on $J^k \pi$.  Let $D_u$ and $D_x$ denote the total derivative operators (here $F_{u^0 x^0}=F$): 
\[ D_u = \partial_u + \sum_{k=0}^\infty \sum_{i=0}^k F_{u^{k-i+1} x^i} \partial_{F_{u^{k-i} x^i}}, \quad D_x = \partial_x + \sum_{k=0}^\infty \sum_{i=0}^k F_{u^{k-i} x^{i+1}} \partial_{F_{u^{k-i} x^i}}.\] 

\begin{prop}\label{Prop:invariants3D1}
The field of rational differential invariants of functions on $\mathbb R^2$ under the Lie pseudogroup action \eqref{eq:groupaction3D1} is generated by the differential invariants 
\begin{align*}
I &= \frac{(-2  F_u  F_{ux}+ F_{uux}) ( F_x  F_{ux}+ F_{uxx})}{ F_{ux}^3}, \\
J &= \frac{(-2  F_u  F_x  F_{ux}-2  F_u  F_{uxx}+ F_x  F_{uux}+ F_{uuxx})}{ F_{ux}^2},
\end{align*}
and the invariant derivations 
\[ \nabla_1 = \frac{F_{uxx}+F_x F_{ux}}{F_{ux}^2} D_u, \qquad   \nabla_2 = \frac{F_{uux}-2F_u F_{ux}}{F_{ux}^2} D_x.\]
\end{prop}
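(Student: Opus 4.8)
The plan is to follow the same template used in the proofs of Propositions \ref{Prop:invariants} and \ref{Prop:invariants3D2}: first determine the generic orbit dimension of the prolonged pseudogroup action on $J^k\pi$, then count algebraically independent invariants, then exhibit explicit candidate generators and check that they are invariant and that together with the invariant derivations they produce enough algebraically independent invariants in each order to exhaust the transcendence degree, and finally argue rationality/generation by observing that the candidate invariants are affine (or of controlled degree) in the top-order jet variables.

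First I would compute the infinitesimal generators of the pseudogroup action \eqref{eq:groupaction3D1} on the base $\mathbb R^2$ and on $J^0\pi$, and prolong them to $J^k\pi$ using the standard prolongation formula. The action involves two arbitrary functions $\alpha(x)$ and $\beta(u)$ plus the constant $C_1$, so the pseudogroup is infinite-dimensional; the relevant finiteness statement is about the dimension of the generic orbit in each $J^k\pi$. I expect the generic orbit in $J^k\pi$ to have codimension equal to the number of generating invariants and their derivatives available at that order: with two invariants $I,J$ of base order $2$ (in the sense of involving $u$- and $x$-derivatives of $F$ up to a fixed total weight) and two commuting invariant derivations $\nabla_1,\nabla_2$ producing new independent invariants at each step, one gets, for $k$ large, roughly $k+1$ new independent invariants per prolongation step matching the growth of $\dim J^k\pi$ minus the (stabilizing) orbit dimension. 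I would make this count precise by evaluating the rank of the prolonged generators at a generic point for the first few values of $k$ (say $k=3,4,5$) with Maple's DifferentialGeometry/JetCalculus, exactly as the authors did in the preceding propositions, and then invoke the stabilization of the codimension.

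Next I would verify directly that $I$, $J$, $\nabla_1$, $\nabla_2$ are invariant: it suffices to check that the prolonged generators annihilate $I$ and $J$, and that they transform $\nabla_1, \nabla_2$ by the scalar factor that makes $\nabla_i(I')$ pull back correctly — equivalently, that $D_u$ scaled by $(F_{uxx}+F_xF_{ux})F_{ux}^{-2}$ and $D_x$ scaled by $(F_{uux}-2F_uF_{ux})F_{ux}^{-2}$ commute with the pseudogroup action on functions. This is the routine (but lengthy) part; it is the kind of calculation best delegated to a computer algebra system, and I would simply state that it has been checked that way, following the style of the rest of the section. Independence of $I$, $J$ and of the successive derivatives $\nabla_1^a\nabla_2^b I$, $\nabla_1^a\nabla_2^b J$ at a generic point is checked the same way by a Jacobian rank computation.

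The main obstacle, and the only genuinely non-mechanical point, is the completeness/generation claim: that these four objects generate the \emph{whole} field of rational differential invariants, not merely a transcendence basis. As in Propositions \ref{Prop:invariants} and \ref{Prop:invariants3D2}, the argument is that the functions $\nabla_1^a\nabla_2^b I$ and $\nabla_1^a\nabla_2^b J$ can be arranged to be affine (or at worst of uniformly bounded degree) in the highest-order jet coordinates appearing in them, so that on each $J^k\pi$ they form a separating set of rational functions that already rationally generate, leaving no room for a further algebraic extension by rational invariants. I would spell this out by identifying, order by order, which top-weight derivative $F_{u^{k-i}x^i}$ first enters each derived invariant and noting the linear dependence on it; the two-variable base makes the bookkeeping heavier than in the one-variable cases, but the structure is identical, and the conclusion follows by the same reasoning together with the applicability of the results of \cite{KL} (the action satisfies their hypotheses, so rational invariants separate generic orbits).
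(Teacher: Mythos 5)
Your proposal follows essentially the same route as the paper: verify invariance of $I$, $J$, $\nabla_1$, $\nabla_2$ by (computer-assisted) prolongation, compute the codimension of generic orbits in $J^k\pi$, produce that many algebraically independent derived invariants, and conclude generation of the field of rational invariants from affineness in the highest-order jet variables, with \cite{KL} guaranteeing that rational invariants separate generic orbits.

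A few details in your write-up need correcting, though none of them derails the plan. The orbit dimension does not stabilize: the pseudogroup \eqref{eq:groupaction3D1} contains two arbitrary functions of one variable, so the generic orbit dimension in $J^k\pi$ grows by $2$ at each prolongation; the action is transitive on $J^2\pi$ and the generic orbit codimension is $\binom{k}{2}-2$ for $k\geq 3$, so the number of new independent invariants at order $k$ is $k-1$, not ``roughly $k+1$''. Also, $\nabla_1$ and $\nabla_2$ need not commute (and commutativity is not used anywhere). Finally, the bookkeeping you only gesture at in your last paragraph is the one step the paper makes explicit and that actually carries the independence and affineness claims: the fourth-order invariants $J$, $\nabla_1(I)$, $\nabla_2(I)$ must first be recombined algebraically into invariants $K_1,K_2,K_3$ depending respectively on $F_{uuux}$, $F_{uuxx}$, $F_{uxxx}$ only (and lower-order variables); since $\nabla_1$ is a multiple of $D_u$ and $\nabla_2$ a multiple of $D_x$, applying them to the $K_i$ then produces, at each order $k$, a triangular family of invariants, each depending affinely on exactly one of the mixed derivatives $F_{u^{k-i}x^i}$, $1\leq i\leq k-1$ (the pure derivatives $F_{u^k}$, $F_{x^k}$ never enter), which yields precisely $\binom{k}{2}-2$ independent invariants on $J^k\pi$ and hence generation of the whole field. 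Note that your fallback ``or at worst of uniformly bounded degree'' would not suffice for this last step, since degree greater than one in the top variables leaves room for an algebraic extension by rational invariants; the affine normalization is what rules that out.
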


\begin{proof}
The Lie pseudogroup acts transitively on $J^2 \pi$, while the codimension of a generic orbit on $J^k$ is $\binom{k}{2}-2$ for $k \geq 3$. It is easy to verify that $I,J, \nabla_1,\nabla_2$ are invariant, and that the 4 invariants $I,J,\nabla_1(I),\nabla_2(I)$ are algebraically independent on $J^4 \pi$. It is possible to make algebraic combinations of the 4th-order invariants to obtain 3 independent invariants $K_1, K_2, K_3$ in such a way that $K_1$ depends only on $F_{uuux}$ and lower-order variables, $K_2$ depends only on $F_{uuxx}$ and lower-order variables and $K_3$ depends only on $F_{uxxx}$ and lower-order variables. Due to the special form of $\nabla_1$ and $\nabla_2$ it is now obvious that we can generate 4 new independent invariants of order 5, 5 new independent invariants of order 6, and so on. This gives us in total $\binom{k}{2}-2$ invariants on $J^k \pi$. All invariants above are (or can be chosen to be) affine in the highest-order variables, implying that they generate the whole field of rational invariants on $J^k \pi$ for any $k$. 
\end{proof}

We finish this section with explaining how the differential invariants can be used for solving the local equivalence problem of generic  non-closed  recurrent Lorentzian Weyl structures. The idea is essentially that of ``classifying manifolds'' in \cite{O}. Let us focus on the case covered by Proposition \ref{Prop:invariants3D1}. This is the least trivial case, since the base of $\pi$ is 2-dimensional. 

First, let us note that there are other ways of generating the field of differential invariants.  If we have two independent differential invariants $I_1$ and $I_2$ of order $k$, we can construct invariant derivations in the following way. Take $\hat \partial_{I_i} = a_i D_u + b_i D_x$ for $i=1,2$ where $a_i,b_i$ are undetermined functions on $J^k \pi$ for some $k$. The condition $\hat \partial_{I_i}(I_j)= \delta_{ij}$ determines $a_1,b_1,a_2,b_2$ uniquely. The invariant derivations $\hat \partial_{I_1},\hat \partial_{I_2}$ are called the Tresse derivatives corresponding to the pair $I_1,I_2$. In general, the Tresse derivatives can not be constructed from every pair of algebraically independent invariants. We must require that the linear system above has a unique solution. If it does, it follows that the 2-form $d(I_1 \circ j^k s) \wedge d(I_2 \circ j^k s)$ is nonzero for a generic section $s \in \Gamma(\pi)$, i.e., the functions $I_1 \circ j^k s$ and $I_2 \circ j^k s$ are functionally independent functions on $B$. 

Now, consider the two invariants $I,J$ of Proposition \ref{Prop:invariants3D1}, and define $K=\nabla_1(I),L=\nabla_2(I)$. Using the Tresse derivatives corresponding to $I,J$ from Proposition \ref{Prop:invariants3D1}, we can generate all invariants by applying $\hat \partial_I$ and $\hat \partial_J$ to $K$ and $L$. For a section $s \in \Gamma(\pi)$ we can use the four invariants to define four functions on $B$: 
\[I_s= I \circ j^3 s, \quad J_s=J \circ j^4 s, \quad K_s=K \circ j^4 s, \quad L_s=L \circ j^4 s.\]
If $s$ is sufficiently generic, the functions $I_s$ and $J_s$ are functionally independent, and can be used as local coordinates. The remaining two functions $K_s$ and $L_s$ can be expressed in terms of them: $K_s=\kappa_s(I_s,J_s), L_s = \lambda_s(I_s, J_s)$. Two generic sections $s_1$ and $s_2$ are locally equivalent if and only if $(\kappa_{s_1}, \lambda_{s_1})$ is equal to $(\kappa_{s_2}, \lambda_{s_2})$ in some neighborhood. 

For Proposition \ref{Prop:invariants} the local equivalence problem is solved for generic sections $s_1$ and $s_2$ by comparing the function $J_{s_1} = \kappa_{s_1}(I_{s_1})$ to $J_{s_2} = \kappa_{s_2}(I_{s_2})$ in a similar way. For Proposition \ref{Prop:invariants3D2} it is solved by comparing the pairs  $(\kappa_{s_1},\lambda_{s_1})$ and $(\kappa_{s_2},\lambda_{s_2})$ where $\kappa_s$ and $\lambda_s$ are defined by $J_s=\kappa_{s}(I_s)$ and $K_s=\lambda_s(I_s)$.


\section{Recurrent Weyl structures with additional symmetries}\label{sect:symmetric}

In this section we give a local description of the recurrent Weyl manifolds with additional infinitesimal symmetries. We start with those of dimension $\geq 4$.

As we saw in Proposition \ref{psl2}, the symmetry algebra of any  non-closed recurrent Lorentzian Weyl structure on a manifold of dimension $n+2 \geq 4$ contains the Lie algebra spanned by \eqref{eq:Killing}, which is of dimension $(2n-1) + \binom{n-1}{2}$ and abstractly isomorphic to $\mathfrak{heis}(n-1) \rtimes \mathfrak{so}(n-1)$. The remaining vector fields preserving the coordinate form of the Weyl structures are given in \eqref{eq:aff1psl2}, and span a Lie algebra isomorphic to $\aff \oplus \sl$. We repeat them here for convenience: 
\begin{equation*}
\begin{gathered}
Z_1 = \partial_t, \quad Z_2=2 t \partial_t+3 \sum_{i=1}^{n-1} x^i \partial_{x^i}+6v \partial_v, \\
Z_3=\partial_u, \quad Z_4=2 u \partial_u+\sum_{i=1}^{n-1} x^i \partial_{x^i}, \quad Z_5=u^2 \partial_u+u \sum_{i=1}^{n-1} x^i \partial_{x^i} -\frac{1}{2} \sum_{i=1}^{n-1} (x^i)^2 \partial_v.
\end{gathered}
\end{equation*} 

The general linear combination $\sum_{i=1}^5 a_i Z_i$ is a symmetry of the recurrent Weyl structure defined by the function $\psi$ if and only if $\psi$ satisfies
\begin{equation}
(a_1+2t a_2) \psi'(t)+a_5 \psi(t)^2-2 a_4 \psi(t) +a_3 = 0. \label{eq:psi}
\end{equation}
If either $a_1=a_2=0$ or $a_3=a_4=a_5=0$ it follows that $\psi$ is constant. Since we require $\psi'(t)>0$ it is clear that the Lie algebra $\mathfrak g \subset \aff \oplus \sl$ of additional symmetries satisfies $\mathfrak g \cap \aff =\{0\}$ and $\mathfrak g \cap \sl =\{0\}$. This implies in particular that $\dim \mathfrak g \leq 2$ and that both projections
\[ \mathfrak g \to  \aff, \qquad \mathfrak g \to \sl\]
are injective. We summarize this in a Lemma.

\begin{lem}
The Lie algebra of infinitesimal symmetries of a  non-closed recurrent Lorentzian Weyl structure is of the form $\mathfrak{so}(n-1) \ltimes (\mathfrak{g} \ltimes \mathfrak{heis}(n-1))$ with $\mathfrak g \subset \aff \oplus \sl$, $\mathfrak g \cap \aff=\{0\}$, $\mathfrak g \cap \sl=\{0\}$ and $\dim \mathfrak g \leq 2$. 
\end{lem}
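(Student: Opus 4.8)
The plan is that most of the assertion has in fact already been obtained in the discussion preceding the Lemma, and only the semidirect-product structure needs to be filled in. First I would recall from Proposition~\ref{psl2} (and from the uniqueness up to homothety of the preferred metric $h$, which is the input that makes Proposition~\ref{psl2} applicable to arbitrary symmetries) that, in the local coordinates of Theorem~\ref{ThFinalLocal}, every infinitesimal symmetry of a non-closed recurrent Lorentzian Weyl structure with $\dim M=n+2\geq 4$ is a linear combination of the vector fields~\eqref{eq:Killing}, which span a subalgebra $\mathfrak a\cong\mathfrak{heis}(n-1)\rtimes\mathfrak{so}(n-1)$ of symmetries of \emph{every} such structure, and of $Z_1,\dots,Z_5$ from~\eqref{eq:aff1psl2}, which span a copy of $\aff\oplus\sl$. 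Letting $\mathfrak g$ denote the subspace of those combinations $\sum a_iZ_i$ that are symmetries of the particular structure determined by $\psi$, one immediately has $\mathfrak g\subseteq\aff\oplus\sl$ and a vector-space decomposition $\mathfrak{sym}=\mathfrak a\oplus\mathfrak g$.

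Next I would invoke the criterion~\eqref{eq:psi}: a nonzero element of $\mathfrak g\cap\aff$ would force $(a_1+2ta_2)\psi'(t)\equiv 0$ with $(a_1,a_2)\neq(0,0)$, hence $\psi'\equiv 0$ since $a_1+2ta_2$ has at most one zero, while a nonzero element of $\mathfrak g\cap\sl$ would force the identity $a_5\psi^2-2a_4\psi+a_3\equiv 0$ with $(a_3,a_4,a_5)\neq 0$, so that $\psi$ takes only finitely many values and must be constant; both possibilities contradict $\psi'(t)>0$. Thus $\mathfrak g\cap\aff=\{0\}$ and $\mathfrak g\cap\sl=\{0\}$, so the projections $\aff\oplus\sl\to\aff$ and $\aff\oplus\sl\to\sl$ restrict to injections on $\mathfrak g$; in particular $\dim\mathfrak g\leq\dim\aff=2$.

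It remains to promote $\mathfrak{sym}=\mathfrak a\oplus\mathfrak g$ to a semidirect sum. For this I would check that $\mathfrak a$ is an ideal in $\mathfrak{sym}$: it is a subalgebra by Proposition~\ref{psl2}, and a short computation (e.g.\ $[Z_5,\partial_{x^k}]=\pm(x^k\partial_v-u\partial_{x^k})\in\mathfrak a$) shows that each $Z_i$ normalizes it, the remaining brackets being comparably easy or checkable by computer algebra as in Proposition~\ref{psl2}. Then $\mathfrak g$ is automatically a subalgebra: since $\mathfrak{sym}$ and $\spa\{Z_1,\dots,Z_5\}=\aff\oplus\sl$ are both Lie subalgebras of the algebra of vector fields preserving the coordinate form~\eqref{eq:Weyl}, and $\mathfrak a\cap(\aff\oplus\sl)=\{0\}$ inside that algebra (the union of~\eqref{eq:Killing} with $\{Z_1,\dots,Z_5\}$ being a basis of it by Proposition~\ref{psl2}), a bracket of two elements of $\mathfrak g$ lies in $(\mathfrak a\oplus\mathfrak g)\cap(\aff\oplus\sl)=\mathfrak g$. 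This yields $\mathfrak{sym}=\mathfrak a\rtimes\mathfrak g=(\mathfrak{heis}(n-1)\rtimes\mathfrak{so}(n-1))\rtimes\mathfrak g$ with the stated properties of $\mathfrak g$. I do not expect a serious obstacle here: the only subtle point is the one flagged in the first paragraph, namely that \emph{every} infinitesimal symmetry is captured by the list in Proposition~\ref{psl2}, which relies on the canonicity (up to homothety) of $h$ and of the parallel distributions it determines; the rest is routine bracket bookkeeping.
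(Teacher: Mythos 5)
Your proposal is correct and follows essentially the same route as the paper: the Lemma there is simply a summary of the preceding discussion (Proposition \ref{psl2} giving the full list of form-preserving vector fields, the criterion \eqref{eq:psi}, and the observation that a nonzero element of $\mathfrak g\cap\aff$ or $\mathfrak g\cap\sl$ would force $\psi$ constant, contradicting $\psi'(t)>0$). Your explicit verification that the span of \eqref{eq:Killing} is an ideal and that $\mathfrak g$ is a complementary subalgebra only makes precise the semidirect-sum structure that the paper leaves implicit.
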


We can use our remaining coordinate freedom of $\SAff \times \PSL \times \mathbb Z_2$ to simplify the search for recurrent Weyl structures with additional symmetries. Let us first consider the case when $\dim \mathfrak g =2$. The transformations of $\PSL$ act transitively on the 2-dimensional Lie subalgebras of $\mathfrak{sl}(2)$, so we may assume that the projection of $\mathfrak g$ to $\mathfrak{sl}(2)$ is spanned by $Z_3$ and $Z_4$ (after a coordinate change preserving the form \eqref{eq:Weyl} and the condition $\psi'(t)>0$).  
Since both $\langle Z_1, Z_2\rangle$ and $\langle Z_3, Z_4 \rangle$ contain a unique 1-dimensional ideal (spanned by $Z_1$ and $Z_3$, respectively), it follows that \[\mathfrak g=\langle Z_1+C_1 Z_3, Z_2 + Z_4+C_2 Z_3 \rangle,\]
where the coefficient of $Z_4$ is determined by the requirement that $\mathfrak{g}$ is closed under the Lie bracket. By further applying transformations from $\SAff \times \PSL$, and changing basis, the Lie algebra can be brought to the form $\langle Z_1\pm Z_3, Z_2+Z_4\rangle$. Only one of these preserves a Weyl structure satisfying  $\psi'(t)>0$, namely $\mathfrak g= \langle Z_1- Z_3, Z_2+Z_4\rangle$. In this case the Weyl structure \eqref{eq:Weyl} is $\mathfrak g$-invariant if and only if $\psi$ satisfies the ODEs 
\[ \psi'(t)-1=0, \qquad 2t \psi'(t) -2 \psi(t) = 0.\]
The unique solution of this pair of ODEs is $\psi(t)=t$. It follows that all  non-closed recurrent Lorentzian Weyl structures with $\dim \mathfrak g = 2$ are locally equivalent to the one defined by $\psi(t)=t$. It is not difficult to see that the full Lie algebra of infinitesimal symmetries acts transitively, implying that the Weyl manifold is locally homogeneous.

\begin{prop}\label{prop:homogeneous}
Let $(M,h,\omega_h)$ be a locally homogeneous  non-closed recurrent Lorentzian Weyl structure of dimension $n+2 \geq 4$. In a neighborhood of each point in $M$, there exists local coordinates so that the pair $(h,\omega_h)$ takes the form 
\[ h=(dt)^2+\frac{1}{(u+t)^2} \left( 2 dv du + \sum_{i=1}^{n-1} (dx^i)^2 \right), \qquad  \omega_h = \frac{dt}{u+t},\]
for $u+t >0$. It is invariant under the Lie algebra spanned by the vector fields
\begin{gather*}
\partial_v, \qquad \partial_{x^i}, \qquad 
x^i \partial_v-u \partial_{x^i}, \qquad x^i \partial_{x^j}-x^j \partial_{x^i}, \\ \partial_t-\partial_u, \qquad t\partial_t+u\partial_u+2 \sum_{i=1}^{n-1} x^i \partial_{x^i}+3 v\partial_v.
\end{gather*}
All other  non-closed  recurrent Lorentzian Weyl structures have symmetry algebra of strictly lower dimension.
\end{prop}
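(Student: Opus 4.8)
The plan is to derive the statement almost entirely from Theorem~\ref{ThFinalLocal}, Proposition~\ref{psl2}, and the normalization already carried out in the discussion immediately preceding this proposition, adding only a short dimension count and a transitivity check.

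First I would recall that, by Theorem~\ref{ThFinalLocal} and Proposition~\ref{psl2}, in suitable local coordinates the structure is given by \eqref{eq:Weyl} for some $\psi$ with $\psi'(t)>0$, and that its infinitesimal symmetry algebra is spanned by the $(2n-1)+\binom{n-1}{2}$ vector fields \eqref{eq:Killing} together with those linear combinations $\sum_{i=1}^5 a_i Z_i$ of \eqref{eq:aff1psl2} for which $\psi$ satisfies \eqref{eq:psi}. The key observation is then purely linear: at every point of the chart the fields \eqref{eq:Killing} span exactly the $n$-dimensional subspace $\langle\partial_v,\partial_{x^1},\dots,\partial_{x^{n-1}}\rangle$, which has codimension two in $T_xM$ (the missing directions being $\partial_t$ and $\partial_u$), while a field $\sum a_i Z_i$ contributes, modulo $\langle\partial_v,\partial_{x^i}\rangle$, only the vector $(a_1+2ta_2)\partial_t+(a_3+2ua_4+a_5 u^2)\partial_u$. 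Hence the full symmetry algebra spans $T_xM$ at a point if and only if the subalgebra $\mathfrak g\subset\aff\oplus\sl$ of extra symmetries is two-dimensional there; since the Lemma preceding this proposition gives $\dim\mathfrak g\le2$ always, local homogeneity forces $\dim\mathfrak g=2$.

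Next I would invoke the normalization already performed in the text just before the proposition: when $\dim\mathfrak g=2$, the residual freedom $\SAff\times\PSL\times\mathbb Z_2$ acting by \eqref{eq:groupactionD4} brings $\psi$ to $\psi(t)=t$, with corresponding $\mathfrak g=\langle Z_1-Z_3,\,Z_2+Z_4\rangle$. Substituting $\psi(t)=t$ (so $\psi'=1$) into \eqref{eq:Weyl} yields the stated $h$ and $\omega_h$ on the domain $u+t>0$ (chosen as in the paragraph after Theorem~\ref{ThFinalLocal}), and the listed vector fields are exactly \eqref{eq:Killing} together with $Z_1-Z_3=\partial_t-\partial_u$ and $\frac{1}{2}(Z_2+Z_4)=t\partial_t+u\partial_u+2\sum_{i=1}^{n-1}x^i\partial_{x^i}+3v\partial_v$; solving \eqref{eq:psi} for $\psi(t)=t$ (it becomes $a_5 t^2+2(a_2-a_4)t+(a_1+a_3)=0$, whence $a_5=0$, $a_2=a_4$, $a_1=-a_3$) shows there are no further symmetries. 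To see that this structure is genuinely locally homogeneous, I would check at an arbitrary point with $u+t\neq0$ that the $(\partial_t,\partial_u)$-components of $\partial_t-\partial_u$ and $t\partial_t+u\partial_u+\cdots$, namely $(1,-1)$ and $(t,u)$, are linearly independent precisely because $u+t\neq0$; combined with $\partial_v,\partial_{x^i}$ this spans $T_xM$, and local transitivity of the symmetry group follows by flowing. Finally, the last sentence is immediate: any structure not locally equivalent to $\psi(t)=t$ has $\dim\mathfrak g\le1$ and hence symmetry algebra of dimension at most $(2n-1)+\binom{n-1}{2}+1$, strictly below that of the homogeneous one.

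The only new content beyond the preceding text is the codimension-two dimension count and the rank computation $(1,-1)\not\parallel(t,u)\Leftrightarrow u+t\neq0$; the mild obstacle is making sure the two extra symmetries close the codimension-two gap at \emph{every} point of the domain rather than only at generic points, and that ``locally homogeneous'' is correctly read as the symmetry algebra spanning the tangent space everywhere.
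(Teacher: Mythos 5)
Your proposal is correct and follows essentially the same route as the paper: reduce to the case $\dim\mathfrak g=2$ using the lemma bounding the extra symmetries in $\aff\oplus\sl$, normalize by the residual $\SAff\times\PSL\times\mathbb Z_2$ freedom to $\psi(t)=t$ with $\mathfrak g=\langle Z_1-Z_3,\,Z_2+Z_4\rangle$, and then read off the coordinate form, the symmetry algebra and transitivity. The only differences are presentational: you make explicit two points the paper leaves as ``it is not difficult to see'' (the codimension-two count showing local homogeneity forces $\dim\mathfrak g=2$, and the pointwise independence of the projections $(1,-1)$ and $(t,u)$ on the domain $u+t\neq 0$), while the key normalization of the two-dimensional case is cited from, rather than re-derived alongside, the computation the paper performs immediately before the proposition.
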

Most of the vector fields in Proposition \ref{prop:homogeneous} are Killing fields of the metric $h$. The only exception to this is the last vector field, which is a homothety. However, all of these vector fields are Killing fields of the conformally related metric $\tilde h=\frac{1}{(u+t)^2} h$. Furthermore, they are the only Killing fields of $\tilde h$, and they annihilate the 1-form $\omega_{\tilde h}$.

%

Consider now the case when $\dim \mathfrak g=1$. By applying $\SAff$ to a general element $a_1 Z_1+a_2 Z_2 \in \aff$, it is clear that we can set either $a_1=0$ or $a_2=0$. Thus we are left with two non-equivalent 1-dimensional Lie subalgebras of  $\aff$:
\[\langle Z_1 \rangle, \qquad \langle Z_2 \rangle.\]  
Classification of elements in $\mathfrak{sl}(2)$ up to transformations of $\PSL$ is equivalent to the classification of binary quadratic forms, which is well-known (see for example \cite{Olver}). From this classification it follows that the 1-dimensional Lie subalgebras of $\mathfrak{sl}(2)$ can be brought to one of the following, nonequivalent, normal forms: 
\[ \langle Z_3 \rangle, \qquad \langle Z_4 \rangle, \qquad \langle Z_3+Z_5 \rangle.\] 
Since we require that both projections $\mathfrak g \to \aff$ and $\mathfrak g \to \sl$ are nontrivial, we are left with 6 cases to investigate:
\begin{description}
\item[Case 1, $\mathfrak g \subset \langle Z_1 \rangle \oplus \langle Z_3 \rangle$] By using a scaling transformation from $\SAff$ we can set $\mathfrak g= \langle Z_1 \pm Z_3 \rangle$. Only one of these preserves recurrent Weyl structures of the form \eqref{eq:Weyl} with $\psi'(t)>0$, namely $\mathfrak g=\langle Z_1-Z_3 \rangle$. The invariant recurrent Weyl structures are given by \[\psi(t)=t+C.\] 
These Weyl structures are locally homogeneous, and equivalent to the one in Proposition \ref{prop:homogeneous}. The integration constant $C$ can be set to 0 by using a translation from $\SAff$ (which preserves the Lie algebra $\mathfrak g$). 

\item[Case 2, $\mathfrak g \subset \langle Z_1 \rangle \oplus \langle Z_4 \rangle$] After applying a scaling transformation in $\SAff$, we have $\mathfrak g = \langle 2Z_1 \pm Z_4 \rangle$. The sign can be fixed  by an application of the transformation $(u,x^i,t,v)\mapsto (-u,x^i,-t,-v)$. This results in the Lie algebra $\mathfrak g = \langle 2Z_1 + Z_4 \rangle$ which preserves the recurrent Weyl structures given by 
\[ \psi(t)=C e^{t}. \]
The condition $\psi'(t)>0$ is equivalent to $C>0$, and the integration constant can be set to $C= 1$ by applying a translation from $\SAff$. 

\item[Case 3, $\mathfrak g \subset \langle Z_1 \rangle \oplus \langle Z_3+Z_5 \rangle$] The Lie algebra can be brought to the form $\langle Z_1\pm (Z_3+Z_5)\rangle$ by a scaling transformation from $\SAff$. Of these two, only $\mathfrak g =\langle Z_1 - (Z_3+Z_5) \rangle$ admits invariant recurrent Weyl structures of the form \eqref{eq:Weyl} with $\psi'(t)>0$. They are given by 
\[ \psi(t) = \tan(t+C). \]
By applying a translation from $\SAff$ we can set $C=0$.

\item[Case 4, $\mathfrak g \subset \langle Z_2 \rangle \oplus \langle Z_3 \rangle$] We have $\mathfrak g = \langle Z_2-2A Z_3 \rangle$, where we can assume that $A>0$, due to the transformation $(u,x^i,t,v)\mapsto (-u,x^i,-t,-v)$. The general solution of \eqref{eq:psi} for $a_2=1,a_3=-2A,a_1=a_4=a_5=0$ is 
\[\psi(t)=A \ln |t|+C.\]
The integration constant $C$ can be set to $0$ by applying a scaling transformation from $\SAff$. The condition $\psi'(t)>0$ is equivalent to $t>0$ when $A>0$. 
\item[Case 5, $\mathfrak g \subset \langle Z_2 \rangle \oplus \langle Z_4 \rangle$] We have $\mathfrak g = \langle Z_2 + A Z_4 \rangle$, where $A$ is a nonzero constant. The general solution of \eqref{eq:psi} for $a_2=1,a_4=A,a_1=a_3=a_5=0$ is
\[ \psi(t)=C |t|^A.\]
The condition $\psi'(t)>0$ is equivalent to $AC t>0$. The integration constant $C$ can be set to $1$ by applying a scaling from $\SAff$, together with the transformation $(u,x^i,t,v)\mapsto (-u,x^i,-t,-v)$ if necessary. When $A=1$ the recurrent Weyl manifold is locally homogeneous, and equivalent to the one in Proposition \ref{prop:homogeneous}.

\item[Case 6,  $\mathfrak g \subset \langle Z_2 \rangle \oplus \langle Z_3+Z_5 \rangle$] We have $ \mathfrak g = \langle Z_2-2A(Z_3+Z_5) \rangle$, where we can assume that $A>0$, due to the transformation $(u,x^i,t,v)\mapsto (-u,x^i,-t,-v)$.  The general invariant recurrent Weyl structure of the form \eqref{eq:Weyl} is given by 
\[\psi(t)=\tan(A (\ln |t|+C)).\]
We have 
\[\psi'(t) = \frac{A}{t} (1+\psi(t)^2)\]
which means that $\psi'(t)>0$ if and only if $t>0$, when $A>0$. By applying a scaling transformation from $\SAff$ we can set $C=0$.
\end{description}
Thus we obtain the following proposition:

\begin{prop}\label{prop:1dim}
Let $(M,h,\omega)$ be a  non-closed recurrent Lorentzian Weyl manifold of dimension $n+2 \geq 4$. If its symmetry Lie algebra is exactly $(2n+\binom{n-1}{2})$-dimensional, then  in a neighborhood of each point in $M$, there exists local coordinates such that the pair $(h,\omega)$ takes the form \eqref{eq:Weyl} where $\psi$ is given by one of the following: 
\begin{align*}
 \psi(t) &= e^{t}, \\ 
\psi(t) &= \tan(t), \\
 \psi(t)&=A \ln t, \quad A > 0,  \\
 \psi(t) &= \tan(A \ln t), \quad A>0, \\
   \psi(t) &= |t|^{A}, \quad  At>0, \; A \neq 1.
\end{align*}
\end{prop}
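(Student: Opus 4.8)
The plan is to derive Proposition \ref{prop:1dim} as an immediate consequence of the case-by-case analysis that has already been carried out in the preceding six paragraphs (Cases 1--6) together with the earlier reduction that $\mathfrak g \subset \aff\oplus\sl$ with trivial intersection with each summand, $\dim\mathfrak g\le 2$. The main structural observation is the dichotomy between $\dim\mathfrak g=2$ and $\dim\mathfrak g=1$: by the Lemma and the discussion around equation \eqref{eq:psi}, a symmetry algebra strictly larger than $\mathfrak{heis}(n-1)\rtimes\mathfrak{so}(n-1)$ has dimension exactly $(2n-1)+\binom{n-1}{2}+1$ or $(2n-1)+\binom{n-1}{2}+2$. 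The first case, $\dim\mathfrak g=2$, was shown to force $\psi(t)=t$ (the locally homogeneous case of Proposition \ref{prop:homogeneous}), which has symmetry dimension $(2n-1)+\binom{n-1}{2}+2$, one more than the hypothesis of the present proposition. Hence the hypothesis ``symmetry Lie algebra is exactly $(2n+\binom{n-1}{2})$-dimensional'' is precisely the statement $\dim\mathfrak g=1$.

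So the proof reduces to collecting the outputs of Cases 1--6 under the assumption $\dim\mathfrak g=1$. First I would note that Cases 1 and 5 (with $A=1$) yield $\psi(t)=t$ up to the remaining $\SAff\times\PSL\times\mathbb Z_2$ freedom; but these give the locally homogeneous structure, whose symmetry algebra is $2$-dimensional over the Heisenberg-type part, contradicting the dimension hypothesis. Therefore these subcases must be excluded, which is exactly why Case 5 appears in the list only with the restriction $A\neq 1$, and why Case 1 does not appear at all. The remaining possibilities are: Case 2 giving $\psi(t)=e^t$; Case 3 giving $\psi(t)=\tan t$; Case 4 giving $\psi(t)=A\ln t$ with $A>0$; Case 6 giving $\psi(t)=\tan(A\ln t)$ with $A>0$; and Case 5 with $A\neq 1$ giving $\psi(t)=|t|^A$ with $At>0$. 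In each case the integration constants were normalized away using translations or scalings from $\SAff$ (and the discrete flip from $\mathbb Z_2$ when needed to fix a sign), so every $1$-dimensional $\mathfrak g$ yields, up to the group action \eqref{eq:groupactionD4}, one of the five listed functions.

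Conversely, I would briefly verify that each of the five listed functions does produce a structure with symmetry algebra of dimension exactly $(2n-1)+\binom{n-1}{2}+1$: each $\psi$ solves \eqref{eq:psi} for a one-parameter family of coefficients $(a_1,\dots,a_5)$ spanning a $1$-dimensional $\mathfrak g$ (one checks directly that no two-dimensional solution space of \eqref{eq:psi} is available, since $\dim\mathfrak g=2$ forces $\psi(t)=t$, which is not equivalent to any of the five). This establishes that the dimension is $\ge (2n-1)+\binom{n-1}{2}+1$ and, by the Lemma, $\le$ that value as well, hence equality. Together with the reduction of the equivalence problem to the action \eqref{eq:groupactionD4} from Section \ref{sect:coord}, we conclude that ``in a neighborhood of each point there exist local coordinates such that $(h,\omega)$ takes the form \eqref{eq:Weyl} with $\psi$ one of the five functions.''

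The proposition is essentially a summary statement, so there is no serious obstacle remaining; the only point requiring a little care is the bookkeeping that separates the $\dim\mathfrak g=1$ cases from the homogeneous $\dim\mathfrak g=2$ case. Concretely, one must make sure that the five families are pairwise inequivalent under \eqref{eq:groupactionD4} and that none of them is secretly equivalent to $\psi(t)=t$ — this can be checked either by computing the differential invariants $I,J$ of Proposition \ref{Prop:invariants} on each family (they are non-constant and distinct, whereas for $\psi(t)=t$ the relevant denominator $2\psi_1\psi_3-3\psi_2^2$ vanishes, reflecting the larger symmetry), or by directly analyzing which $1$-parameter subalgebras of $\aff\oplus\sl$ arise. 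Thus the proof is: invoke the Lemma to get $\mathfrak g\subset\aff\oplus\sl$ with $\dim\mathfrak g\le 2$; observe that the dimension hypothesis is equivalent to $\dim\mathfrak g=1$; run through the six normal forms of $1$-dimensional subalgebras established in Cases 1--6, discard Case 1 and the $A=1$ instance of Case 5 as they give the homogeneous structure; what remains is exactly the stated list.
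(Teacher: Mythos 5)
Your proposal is correct and follows essentially the same route as the paper: the proposition is exactly the summary of the preceding Cases 1--6 (with the Lemma reducing to $\mathfrak g\subset\aff\oplus\sl$, $\dim\mathfrak g\le 2$), where the hypothesis on the symmetry dimension means $\dim\mathfrak g=1$, and Case 1 together with Case 5 at $A=1$ are discarded as they reproduce the homogeneous structure of Proposition \ref{prop:homogeneous}. One minor slip in your optional cross-check: on these cohomogeneity-one families the invariants $I,J$ of Proposition \ref{Prop:invariants} are \emph{constant} along each structure (e.g.\ $I=0$, $J=1$ for $\psi=e^t$), not non-constant; inequivalence is seen from the constant values or, as in the paper, from the classification of the $1$-dimensional subalgebras, so this does not affect the argument.
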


Together, Proposition \ref{prop:homogeneous} and Proposition \ref{prop:1dim} give a complete local description of  non-closed recurrent Lorentzian Weyl structures having a larger Lie algebra of symmetries than the generic ones.

Next we describe the recurrent Weyl structures on 3-dimensional manifolds with additional symmetry. Consider first the Weyl structures of the form \eqref{eq:3D1}. They all share the symmetry $\partial_v$. We look for recurrent Weyl structures with additional symmetries of the form 
\[ Y= A(x) \partial_x+B(u) \partial_u + C_1 v \partial_v.\] 
The vector field $Y$ is a symmetry of the recurrent Weyl structure defined by the function $F$ if and only if 
\[A(x) \partial_x F+B(u) \partial_u F+(C_1+B'(u))/2-A'(x)=0.\] 
If either $A(x) \equiv 0$ or $B(u) \equiv 0$ in some neighborhood, we get $\partial_{x} \dot F \equiv 0$ in the same neighborhood, which violates our assumption that $\partial_x \dot F$ is non-vanishing.In a neighborhood of a point where both $A$ and $B$ are non-zero, we may simultaneously change the $x$ and $u$ coordinates (using transformations from \eqref{eq:groupaction3D1}), so that $A(x)\equiv 1$ and $B(u) \equiv 1$. Furthermore, we may set $C_1$ equal to either $-2$ or $0$ by scaling $u$ and $x$ and multiplying $Y$ by a constant. The general  recurrent Weyl structure of the form \eqref{eq:3D1} admitting the symmetry $\partial_x+\partial_u$ is given by 
\[ F(x,u)=\tilde F(x-u),\] 
while the recurrent Weyl structure admitting the symmetry $\partial_x+\partial_u-2 v \partial_v$ is given by 
\[ F(x,u)=x+\tilde F(x-u).\] 
In both cases $\tilde F''(x-u)$ is required to be non-vanishing. 

Now, let's look for recurrent Weyl structures admitting a 2-dimensional Lie algebra of additional symmetries: 
\[\langle Y_1=A_1(x)\partial_x+B_1(u)\partial_u+C_1 v \partial_v, Y_2= A_2(x) \partial_x+B_2(u)\partial_u+C_2 v \partial_v \rangle.\] 
Assuming that either $A_1(x)$ or $A_2(x)$ is nonvanishing in some neighborhood, we can change the basis and make a local coordinate change in $x$ to set $A_1(x)\equiv 1$ and $A_2(x) =x$. This basis satisfies $[Y_1,Y_2]=Y_1$. Since $B_1(u)$ can't vanish identically in this coordinate neighborhood (as it would lead to $\partial_x \dot F=0$), we may in a possibly smaller neighborhood change coordinates to set $B_1(u) \equiv 1$. From the commutation relation it follows that $B_2(u)=u+C$, where the constant $C$ can be set to 0 by a translation in $u$. The constant $C_1$ is now required to vanish, due to the commutation relation. The recurrent Weyl structures of the form \eqref{eq:3D1} admitting the symmetries $\partial_x+\partial_u$ and $x \partial_x+u\partial_u+C_2 v \partial_v$
are given by 
\[F(x,u)=\frac{1-C_2}{2} \ln|u-x|+C,\] 
where $C_2 \neq 1$, and the integration constant $C$ can be set equal to $0$ by simultaneously scaling $x$ and $u$ ($C_2 =1$ would imply that $F$ is constant). 
This defines locally homogeneous recurrent Weyl manifolds:
\begin{equation*}
 g = 2 dv du + |u-x|^{C_2-1} (dx)^2, \qquad \omega_g = \frac{1-C_2}{2(u-x)} du.
\end{equation*}
For a given $C_2$, this is defined for $u-x>0$ and $u-x<0$, respectively. The coordinate transformation $(u,x,v) \mapsto (-u,-x,-v)$ lets us identify these two recurrent Weyl structures, so we end up with 
\begin{equation*}
 g = 2 dv du + (u-x)^{C_2-1} (dx)^2, \qquad \omega_g = \frac{1-C_2}{2(u-x)} du, \qquad u-x>0.
\end{equation*}

Lastly, we consider the Weyl structures of the form \eqref{eq:3D2}. Assume that such a Weyl structure has an additional symmetry of the form 
\[Y=A_1\partial_u+A_2 \partial_v+A_3(u\partial_u-v \partial_v)+A_4(u \partial_u+v\partial_v+x\partial_x).\]
The vector field $Y$ is a symmetry of the Weyl structure defined by $a(u)$ and $c(u)$ if and only if the following two equations hold: 
\begin{equation} 
\begin{split} 
(A_1+(A_3+A_4)u) a'(u)+(A_3+2 A_4) a(u)=0, \\ (A_1+(A_3+A_4)u ) c'(u)+(2A_3+A_4) c(u)+A_2 a(u)=0.
\end{split} \label{eq:symmetry3D2}
\end{equation}
If $A_4 \neq 0$, we can multiply $Y$ with a constant factor to set $A_4$ equal to $1$. If $A_4=0$ and $A_3\neq 0$, then we can set $A_3=1$ in the same way.  Next, unless both $A_3=0$ and $A_4=0$, we can independently make translations in $u$ and $v$ to set $A_1=0$ and $A_2=0$. We end up with two distinct (families of) vector fields to consider:
\[u \partial_u+v\partial_v+x\partial_x + A_3(u\partial_u-v\partial_v), \quad u\partial_u-v\partial_v.\]
Alternatively, if both $A_3=0$ and $A_4=0$, then we can multiply $Y$ by a constant, setting $A_1$ equal to $0$ or $1$. Then, if $A_1=0$, we can set $A_2$ equal $1$ in the same way. If $A_1=1$ and $A_2 \neq 0$, we can set $A_2$ equal to $\pm 1$ by applying a transformation in the flow of $2u \partial_u+x\partial_x$. We obtain the following four distinct vector fields:
\[ \partial_v, \qquad \partial_u, \qquad \partial_u-\partial_v, \qquad \partial_u+\partial_v.\]

By solving equation \eqref{eq:symmetry3D2} for each of these cases, we find the symmetric recurrent Weyl structures. If $A_1=A_3=A_4=0, A_2=1$, then \eqref{eq:symmetry3D2} implies $a(u)\equiv 0$, which is inconsistent with our assumption that $a$ is non-vanishing. If $A_2=A_3=A_4=0, A_1=1$, the solutions are given by $a(u) \equiv A, c(u) \equiv C$ with $A,C \in \mathbb R$, $A \neq 0$. By applying coordinate transformations from our Lie group, we can normalize the constants to obtain 
\begin{equation}
a(u)\equiv 1, \qquad c(u) \equiv 0.\label{eq:1d1} 
\end{equation}
 It turns out that this recurrent Weyl structure admits two symmetries:
\[ \partial_u, \qquad u\partial_u-3v\partial_v-x\partial_x.\]  
If $A_1=1=- A_2,A_3=A_4=0$ we obtain the general solution $a(u)\equiv A, c(u)= Au+C$. We can apply coordinate transformations from our Lie group to obtain
\begin{equation}
 a(u) \equiv 1, \qquad c(u)= u.\label{eq:1d2}
\end{equation}
If $A_1=1= A_2,A_3=A_4=0$, similar computations lead to 
\begin{equation}
 a(u) \equiv 1, \qquad c(u)=- u.\label{eq:1d2b}
\end{equation}
Choosing constants $A_1=A_2=A_4=0,A_3=1$ gives the general solution $a(u)=A/|u|,c(u)=C/|u|^2$, defined for either $u<0$ or $u >0$. Let us assume $u>0$ and remove the absolute value sign. Each solution defined for $u<0$ is equivalent to one of the solutions defined for $u>0$ through the transformation $(u,x,v)\mapsto (-u,x,-v)$. We can set $A= 1$ using the flow of $u\partial_u+v\partial_v+x\partial_x$, and the transformation $(u,x,v)\mapsto(u,-x,v)$ if necessary. After this normalization we get the following solutions:
\begin{equation}
a(u)=\frac{1}{u}, \quad c(u)= \frac{C}{u^2}, \quad u>0.\label{eq:1d3}
\end{equation}
Now consider the case $A_1=A_2=0,A_4=1$. The general solution to \eqref{eq:symmetry3D2} for this choice of constants is $a(u)=A|u|^{-\frac{A_3+2}{A_3+1}}, c(u)=C|u|^{-\frac{2A_3+1}{A_3+1}}$, which is defined for either $u<0$ or $u>0$. Choosing $u>0$ lets us remove the absolute value sign. Each solution defined for $u<0$ is equivalent to one of the solutions defined for $u>0$ through the transformation $(u,x,v)\mapsto(-u,x,-v)$.  By applying a transformation from the flow of $u \partial_u+v\partial_v+x\partial_x$, and the transformation $(u,x,v)\mapsto (u,-x,v)$ if necessary, we can set $A=1$:
\begin{equation}
a(u)=u^{-\frac{A_3+2}{A_3+1}}, \quad  c(u)= Cu^{-\frac{2A_3+1}{A_3+1}}, \quad u>0.\label{eq:1d4}
\end{equation}
The case $A_3=-1$ is not covered by the above formulas, but for this case the system \eqref{eq:symmetry3D2} implies $a(u) \equiv 0$, so we disregard it. 

 In most cases there is only one infinitesimal symmetry. The only exception is \eqref{eq:1d1} which is also obtained in the special case of \eqref{eq:1d4} given by $A_3=-2$ and $C=0$. This can be verified by inserting the solutions \eqref{eq:1d2}, \eqref{eq:1d2b}, \eqref{eq:1d3} and \eqref{eq:1d4} into \eqref{eq:symmetry3D2} and solving for the constants. 

To sum up, we have the following propositions when $\dim M=3$.
\begin{prop}
Let $(M,g, \omega_g)$ be a  non-closed recurrent Lorentzian Weyl manifold of dimension 3 admitting a 3-dimensional Lie algebra of infinitesimal symmetries. In a neighborhood of each point in $M$, there exists local coordinates so that the pair $(g, \omega_g)$ takes the form 
\begin{equation*}
 g = 2 dv du + (u-x)^{C-1} (dx)^2, \qquad \omega_g = \frac{1-C}{2(u-x)} du, \qquad C \neq 1,
\end{equation*}
with $u-x>0$. The Lie algebra of infinitesimal symmetries is transitive, and spanned by 
\[ \partial_v, \qquad \partial_x+\partial_u, \qquad x \partial_x+u \partial_u+C v \partial_v.\]
\end{prop}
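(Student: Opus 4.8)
The plan is to combine Theorem~\ref{dim3Th} with the symmetry analysis already carried out in Section~\ref{sect:symmetric}. By Theorem~\ref{dim3Th}, near each point the pair $(g,\omega_g)$ is of one of the two normal forms \eqref{eq:3D1} (holonomy algebra $1$-dimensional) or \eqref{eq:3D2} (holonomy algebra $2$-dimensional), and these two possibilities are mutually exclusive for a fixed connected manifold. So the first step is to rule out \eqref{eq:3D2}: by Proposition~\ref{prop:3D2} every infinitesimal symmetry of such a structure lies in the $4$-dimensional Lie algebra spanned by the four listed vector fields, and the case-by-case solution of \eqref{eq:symmetry3D2} done later in Section~\ref{sect:symmetric} shows that no admissible pair $(a(u),c(u))$ is invariant under more than two of them (the maximal case being \eqref{eq:1d1}, with a $2$-dimensional symmetry algebra). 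Hence a $3$-dimensional symmetry algebra forces $(g,\omega_g)$ to be of the form \eqref{eq:3D1}.

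The second step treats \eqref{eq:3D1}. Every such structure admits $\partial_v$, and by Proposition~\ref{prop:3D1} every infinitesimal symmetry has the shape $A(x)\partial_x+B(u)\partial_u+(C_0+C_1v)\partial_v$; moreover $\langle\partial_v\rangle$ is an ideal of the symmetry algebra, so a $3$-dimensional symmetry algebra is spanned by $\partial_v$ together with two symmetries $Y_i=A_i(x)\partial_x+B_i(u)\partial_u+C_i v\partial_v$, $i=1,2$, independent modulo $\langle\partial_v\rangle$. I would then follow the normalization in Section~\ref{sect:symmetric}: since $\partial_x\dot F$ is non-vanishing the functions $A_1,A_2$ cannot vanish simultaneously on an open set, so after a change of basis and a transformation of the $x$-coordinate from \eqref{eq:groupaction3D1} one may assume $A_1\equiv 1$, $A_2=x$, which forces $[Y_1,Y_2]\equiv Y_1 \pmod{\langle\partial_v\rangle}$; a similar normalization of $u$ (using again that $B_1\not\equiv 0$) gives $B_1\equiv 1$, and the commutation relations then force $B_2(u)=u$ (after a translation) and $C_1=0$. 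Feeding $Y_1=\partial_x+\partial_u$ and $Y_2=x\partial_x+u\partial_u+C_2v\partial_v$ into the symmetry equation
\[ A(x)\partial_xF+B(u)\partial_uF+\tfrac12\big(C_1+B'(u)\big)-A'(x)=0 \]
(which characterizes symmetries of \eqref{eq:3D1}, cf.\ the proof of Proposition~\ref{prop:3D1}) gives $F(x,u)=\tfrac{1-C_2}{2}\ln|u-x|+C$ with $C_2\neq 1$ (the value $C_2=1$ makes $F$ constant, contradicting $\partial_x\dot F\neq 0$), and a common rescaling of $x$ and $u$ removes $C$. Renaming $C_2$ as $C$, one obtains $g=2dvdu+|u-x|^{C-1}(dx)^2$, $\omega_g=\tfrac{1-C}{2(u-x)}du$; the connected manifold lies on $\{u>x\}$ or $\{u<x\}$, and since $(v,x,u)\mapsto(-v,-x,-u)$ preserves the class \eqref{eq:3D1} and swaps the two, I may assume $u-x>0$ and drop the absolute value, reaching the stated form.

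Finally I would verify by direct computation that $\partial_v$, $\partial_x+\partial_u$ and $x\partial_x+u\partial_u+Cv\partial_v$ are infinitesimal symmetries of this pair: the first two satisfy $L_Yg=0$ and $L_Y\omega_g=0$, while the third satisfies $L_Yg=(C+1)g$ and $L_Y\omega_g=0$; they are closed under the bracket, and since we assumed the symmetry algebra is $3$-dimensional, they span it. Transitivity is then immediate, because at each point the three vectors, expressed in the coordinate frame $\partial_v,\partial_x,\partial_u$, have determinant $\pm(u-x)$, which is nonzero on the domain. The only genuinely computational parts are the elimination of \eqref{eq:3D2} and the normalization of the two extra symmetries in \eqref{eq:3D1}; I expect the latter (managing the coordinate changes and the commutation relations simultaneously) to be the main obstacle, but it is essentially the computation already performed in Section~\ref{sect:symmetric}, so the proof reduces to assembling those results.
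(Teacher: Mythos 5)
Your proposal is correct and follows essentially the same route as the paper: the paper also rules out the second normal form \eqref{eq:3D2} because its maximal symmetry algebra is 2-dimensional, then normalizes two additional symmetries of \eqref{eq:3D1} to $\partial_x+\partial_u$ and $x\partial_x+u\partial_u+C_2v\partial_v$ via the basis and coordinate changes you describe, solves the symmetry equation to get $F=\frac{1-C_2}{2}\ln|u-x|$, and uses $(u,x,v)\mapsto(-u,-x,-v)$ to reduce to $u-x>0$. Your concluding verification ($L_Yg=(C+1)g$, $L_Y\omega_g=0$, and the determinant $u-x\neq 0$ giving transitivity) is a correct, if slightly more explicit, rendering of what the paper leaves implicit.
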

\begin{prop}
Let $(M,g, \omega_g)$ be a  non-closed recurrent Lorentzian Weyl manifold of dimension 3 admitting a 2-dimensional Lie algebra of infinitesimal symmetries. Then one of the following statements hold.
\begin{itemize}
 \item In a neighborhood of a generic point in $M$ there exists local coordinates so that the pair $(g, \omega_g)$ takes the form 
\begin{equation*}
 g = 2 dv du + e^{C x + F(x-u)} (dx)^2, \qquad \omega_g = F'(x-u) du,
\end{equation*}
for some $C \in \{0,1\}$ and some function $F$ with $F''(x-u)$ non-vanishing. In these coordinates, the Lie algebra of symmetries contains the vector fields spanned by 
\[\partial_v, \qquad \partial_x+\partial_u-2 v \partial_v.\] 
\item In a neighborhood of any point in $M$, there exists local coordinates so that the pair $(g, \omega_g)$ takes the form 
\begin{equation*}
g = 2dvdu + (dx)^2 +x\left(v+ \frac{1}{12} x^3\right) (du)^2,\qquad \omega_g=xdu.
\end{equation*}
In these coordinates, the Lie algebra of symmetries is spanned by 
\[ \partial_u, \qquad u \partial_u-3v\partial_v-x\partial_x.\] 
\end{itemize}
\end{prop}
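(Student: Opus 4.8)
The two items of the proposition correspond to the two cases of Theorem~\ref{dim3Th}: locally a non-closed recurrent Lorentzian Weyl structure of dimension $3$ is of type \eqref{eq:3D1} (when the holonomy algebra is $1$-dimensional) or of type \eqref{eq:3D2} (when it is $2$-dimensional). The plan is to organize, separately for each type, the symmetry analysis already set up in Propositions~\ref{prop:3D1} and~\ref{prop:3D2}; most of the work is extracting the consequences of the hypothesis that the symmetry algebra has dimension exactly $2$.

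For type \eqref{eq:3D1}, Proposition~\ref{prop:3D1} tells us that every infinitesimal symmetry has the form $Y=A(x)\partial_x+B(u)\partial_u+(C_0+C_1v)\partial_v$, that $\partial_v$ is always a symmetry, and that $Y$ is a symmetry of the structure defined by $F$ exactly when $A\partial_xF+B\partial_uF+\tfrac12(C_1+B')-A'=0$. If the symmetry algebra is exactly $2$-dimensional, then modulo $\partial_v$ there is exactly one further symmetry $Y=A\partial_x+B\partial_u+C_1v\partial_v$. First I would observe that near a generic point both $A$ and $B$ are nonvanishing: if either vanished identically on an open set, then differentiating the symmetry identity in $x$ or in $u$ would force $\partial_x\dot F\equiv0$ there, contradicting Theorem~\ref{dim3Th}. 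On such a neighborhood I would apply the Lie pseudogroup \eqref{eq:groupaction3D1}, first rescaling $Y$ so that $C_1\in\{0,-2\}$ and then straightening the $x$- and $u$-coordinates so that $A\equiv B\equiv1$. The symmetry equation then reduces to $(\partial_x+\partial_u)F=-\tfrac{C_1}{2}$, with general solution $F=-\tfrac{C_1}{2}x+\tilde F(x-u)$; rewriting $e^{-2F}$ and renaming the arbitrary function produces the form $g=2dvdu+e^{Cx+F(x-u)}(dx)^2$, $\omega_g=F'(x-u)\,du$ announced in the proposition (with $C=0$ when $C_1=0$ and $C$ a fixed nonzero constant when $C_1=-2$, after an obvious rescaling of the coordinates), while the condition that $\partial_x\dot F$ be nonvanishing becomes $F''$ nonvanishing. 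Finally one checks directly that $\partial_v$ together with $\partial_x+\partial_u$ (case $C_1=0$), respectively $\partial_v$ together with $\partial_x+\partial_u-2v\partial_v$ (case $C_1=-2$), are symmetries, and that a generic such $F$ admits no others; this last point is a short computation differentiating the symmetry identity in $x$ and in $u$ and using $\tilde F''\ne0$.

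For type \eqref{eq:3D2}, Proposition~\ref{prop:3D2} tells us that every infinitesimal symmetry is a linear combination $Y=A_1\partial_u+A_2\partial_v+A_3(u\partial_u-v\partial_v)+A_4(u\partial_u+v\partial_v+x\partial_x)$ and that $Y$ is a symmetry of the structure determined by $(a,c)$ iff the pair of \emph{linear} equations \eqref{eq:symmetry3D2} holds. Hence the symmetry algebra is the kernel of an explicit linear map depending on $(a,c)$, and the task is to determine all $(a,c)$ for which this kernel is $2$-dimensional. I would do this in two steps. Step one: using the $4$-dimensional Lie group \eqref{eq:groupaction3D2}, classify a single nonzero symmetry $Y$ up to equivalence, by splitting on whether $A_4\ne0$, then whether $A_3\ne0$, and in the remaining case normalizing with the available translations and scalings; this reduces $Y$ to one of $\partial_v$, $\partial_u$, $\partial_u\pm\partial_v$, $u\partial_u-v\partial_v$, or $u\partial_u+v\partial_v+x\partial_x+A_3(u\partial_u-v\partial_v)$. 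Solving \eqref{eq:symmetry3D2} for $(a,c)$ in each of these cases — the case $\partial_v$ being impossible because $a$ must not vanish, the others producing the normalized families $a\equiv1$ with $c\equiv0$, $a\equiv1$ with $c=\pm u$, $a=1/u$, and $a=u^{-(A_3+2)/(A_3+1)}$ — yields the candidate structures. Step two: substitute each normalized $(a,c)$ back into \eqref{eq:symmetry3D2} and solve for all of $(A_1,A_2,A_3,A_4)$; one finds the kernel is $1$-dimensional in every family, the sole exception being $a\equiv1$, $c\equiv0$ (which also arises as the degenerate member $A_3=-2$, $C=0$ of the last family), whose kernel is $\{(A_1,0,-2A_4,A_4)\}$, i.e. is spanned by $\partial_u$ and $u\partial_u-3v\partial_v-x\partial_x$. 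Writing out $H$ and $\omega_g$ for $a\equiv1$, $c\equiv0$ gives exactly $g=2dvdu+(dx)^2+x\bigl(v+\tfrac{1}{12}x^3\bigr)(du)^2$ and $\omega_g=x\,du$, which completes that case.

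The main obstacle is the bookkeeping in the second case: one must keep the normalizations under \eqref{eq:groupaction3D2} mutually compatible (for instance, rescale $Y$ before using translations in $u$ and $v$), and then verify that the back-substitution step detects an additional symmetry precisely for $a\equiv1$, $c\equiv0$ and for no other member of the families. A convenient internal check there is the identity $k+m=3$ for the exponents $k=\tfrac{A_3+2}{A_3+1}$, $m=\tfrac{2A_3+1}{A_3+1}$ that occur in the last family: it is exactly this relation which makes the two equations \eqref{eq:symmetry3D2} impose dependent constraints on $(A_3,A_4)$ and so keeps the kernel $1$-dimensional away from the exceptional structure. In the first case the only real subtlety is confirming that a generic $F$ of the form $\delta x+\tilde F(x-u)$ has symmetry algebra of dimension exactly $2$, which follows from the differentiation argument mentioned above.
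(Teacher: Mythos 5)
Your proposal is correct and follows essentially the same route as the paper's analysis in Section \ref{sect:symmetric}: for the first holonomy type you normalize the extra symmetry via \eqref{eq:groupaction3D1} to $\partial_x+\partial_u+C_1v\partial_v$ with $C_1\in\{0,-2\}$ and solve $(\partial_x+\partial_u)F=-C_1/2$, and for the second type you normalize a single symmetry under \eqref{eq:groupaction3D2}, solve the linear system \eqref{eq:symmetry3D2} for $(a,c)$, and verify by back-substitution that only $a\equiv 1$, $c\equiv 0$ gives a 2-dimensional kernel, spanned by $\partial_u$ and $u\partial_u-3v\partial_v-x\partial_x$. Your explicit $k+m=3$ dependency check is a nice sharpening of the paper's brief remark that inserting \eqref{eq:1d2}--\eqref{eq:1d4} back into \eqref{eq:symmetry3D2} yields only one symmetry.
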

Notice that in the second case the Lie algebra of symmetries is solvable. In the first case, $C=0$ gives an abelian Lie algebra of symmetries while $C=1$ gives a solvable one.

\begin{prop}
Let $(M,g, \omega_g)$ be a  non-closed  recurrent Lorentzian Weyl manifold of dimension 3 admitting a 1-dimensional Lie algebra of infinitesimal symmetries. Then one of the following statements hold.
\begin{itemize}
 \item In a neighborhood of a generic point in $M$ there exists local coordinates so that the pair $(g, \omega_g)$ takes the form 
\begin{equation*}
 g = 2 dv du + e^{-2F(x,u)} (dx)^2, \qquad \omega_g = \dot F du 
\end{equation*}
with $\partial_x \dot F$  non-vanishing. All metrics of this form share the infinitesimal symmetry $\partial_v$. 
\item In a neighborhood of any point in $M$ there exists local coordinates so that the pair $(g, \omega_g)$ takes the form 
\begin{equation*}
\begin{aligned}
g = 2dvdu + (dx)^2 + H (du)^2,\qquad \omega_g=a(u)xdu, \\
H=  a(u)vx+ \frac{1}{12} a^2(u) x^4 -\frac{1}{3} \dot{a}(u) x^3 + c(u)x,
\end{aligned} 
\end{equation*}
where $a(u)$ and $c(u)$ are given by either \eqref{eq:1d2}, \eqref{eq:1d2b} \eqref{eq:1d3} or \eqref{eq:1d4}. These recurrent Weyl structures have the infinitesimal symmetries given, respectively, by
\[\partial_u-  \partial_v, \quad \partial_u +  \partial_v, \quad u\partial_u - v \partial_v, \quad u \partial_u+ x \partial_x+v\partial_v+A_3(u\partial_u-v\partial_v).  \]
\end{itemize}
\end{prop}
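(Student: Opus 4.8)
The plan is to organize the case analysis of this section, together with Theorem \ref{dim3Th}, into the stated dichotomy. Theorem \ref{dim3Th} says that around each point of a three-dimensional non-closed recurrent Lorentzian Weyl manifold the pair $(g,\omega_g)$ is locally of the form \eqref{eq:3D1} when the holonomy algebra of $\nabla$ is one-dimensional, and of the form \eqref{eq:3D2} when it is two-dimensional. I would treat these two possibilities separately, and the two bullets of the proposition correspond exactly to them.

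In the one-dimensional holonomy case the structure is locally \eqref{eq:3D1}, and here Proposition \ref{prop:3D1} does essentially all the work: every such structure admits the infinitesimal symmetry $\partial_v$, and every infinitesimal symmetry has the form $A(x)\partial_x+B(u)\partial_u+(C_0+C_1v)\partial_v$. Since the symmetry Lie algebra is assumed one-dimensional and already contains the line $\langle\partial_v\rangle$, it must coincide with $\langle\partial_v\rangle$; no further normalization of $F$ is possible (or needed) beyond the standing hypothesis $\partial_x\dot F\neq0$. This yields the first alternative.

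The substantial part is the two-dimensional holonomy case, where $(g,\omega_g)$ is of the form \eqref{eq:3D2} with $a$ non-vanishing. Let $Y$ span the one-dimensional symmetry algebra. By Proposition \ref{prop:3D2} I may write $Y=A_1\partial_u+A_2\partial_v+A_3(u\partial_u-v\partial_v)+A_4(u\partial_u+v\partial_v+x\partial_x)$, and the condition that $Y$ be a symmetry of the structure with data $(a,c)$ is precisely the pair of linear first-order ODEs \eqref{eq:symmetry3D2}. The group \eqref{eq:groupaction3D2} preserving the coordinate form of \eqref{eq:3D2} acts on these structures and conjugates the generators $Y$ accordingly; I would use this freedom to bring $(A_1,A_2,A_3,A_4)$ to one of a short list of normal forms — first normalizing $(A_3,A_4)$ by a scaling (distinguishing $A_4\neq0$, $A_4=0\neq A_3$, and $A_3=A_4=0$), then, in the degenerate case $A_3=A_4=0$, normalizing $(A_1,A_2)$ by translations in $u$ and $v$ and, if necessary, the flow of $2u\partial_u+x\partial_x$. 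For each normal form of $Y$ I solve \eqref{eq:symmetry3D2} explicitly, discard the cases that force $a\equiv0$ (incompatible with $a$ non-vanishing), and normalize the integration constants using the residual scalings and the reflection $(u,x,v)\mapsto(-u,x,-v)$; the surviving cases are exactly the families \eqref{eq:1d2}, \eqref{eq:1d2b}, \eqref{eq:1d3} and \eqref{eq:1d4} — the remaining family \eqref{eq:1d1} is set aside here because it has a two-dimensional symmetry algebra. Finally, substituting each of the four families back into \eqref{eq:symmetry3D2} and solving for the constants $A_i$ confirms that its symmetry algebra is genuinely one-dimensional, with the generator as displayed.

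I expect the main obstacle to be the bookkeeping in this last case: checking that the chosen normal forms for $Y$ are exhaustive up to the action \eqref{eq:groupaction3D2}, so that no symmetric structure is missed, and separately verifying that each of the four surviving families carries a one-dimensional — and not larger — symmetry algebra. Both reduce to elementary linear algebra and the integration of first-order linear ODEs, but must be carried out carefully to be sure that nothing is overlooked or double-counted.
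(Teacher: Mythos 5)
Your proposal is correct and follows essentially the same route as the paper: split by holonomy dimension via Theorem \ref{dim3Th}, use Propositions \ref{prop:3D1} and \ref{prop:3D2} to reduce symmetries to the stated ansatz, normalize the generator $Y$ under the group \eqref{eq:groupaction3D2}, solve \eqref{eq:symmetry3D2} in each normal form, discard the cases forcing $a\equiv 0$ and the case \eqref{eq:1d1} with a two-dimensional algebra, and confirm by back-substitution that the four surviving families have exactly a one-dimensional symmetry algebra. This is precisely the case analysis the paper carries out in Section \ref{sect:symmetric}.
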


\section{Homogeneous spaces} \label{sect:homogeneous}

Consider the locally homogeneous Weyl structure given by $h$ and $\omega_h$ as in Proposition \ref{prop:homogeneous}, and define $g=e^{2 \ln(u+t)}h$. Consider the new coordinates $$t'= \frac{(t+2u)t}{2},\quad v'= v-\frac{t^3}{3}-\frac{ut^2}{2},$$
and denote them again by $t$ and $v$, respectively.
With respect to the new coordinates the metric $g$ and the 1-form $\omega_g$  are given by
$$g=(dt)^2+2dvdu+\sum_{i=1}^{n-1}(dx^i)^2, \qquad \omega_g = -\frac{du}{\sqrt{2t+u^2}}.$$
The symmetry algebra of the Weyl structure is spanned by the vector fields
\begin{gather*}
\partial_v, \qquad \partial_{x^i}, \qquad 
V_i=x^i \partial_v-u \partial_{x^i}, \qquad x^i \partial_{x^j}-x^j \partial_{x^i}, \quad 1\leq i,j\leq n-1, \\ X=\partial_u+t\partial_v-u\partial_t, \qquad Y=2 t\partial_t+u\partial_u+2 \sum_{i=1}^{n-1} x^i \partial_{x^i}+3 v\partial_v.
\end{gather*} 
We will denote this Lie algebra by $\g$.
Let $\g_0$ be the subalgebra of $\g$ spanned by the vector fields $\partial_v$, $\partial_{x^1},\dots, \partial_{x^{n-1}}$, $X$, $Y$. 
The only non-zero Lie brackets of $\g_0$ are
\begin{equation}\label{Liebrg0} [Y,\partial_v]=-3\partial_v,\quad [Y,\partial_{x^i}]=-2\partial_{x^i},\quad 1\leq i\leq n-1, \quad [Y,X]=-X.\end{equation}

Consider the new metric 
$$b=\frac{4}{(2t+u^2)^2}g.$$
The corresponding 1-form is given by
$$\omega_b=\left(\frac{2u}{2t+u^2}-\frac{1}{\sqrt{2t+u^2}}\right)du + \frac{2}{2t+u^2} dt.$$
It is easy to check that the symmetry algebra $\g$ annihilates both $b$ and $\omega_b$. This will imply the equality \eqref{Aut=}.
Let $U$ be the domain in $\Real^{n+2}$ 
defined by the condition
$$2t+u^2>0.$$
The flows of the vector fields $X$ and $Y$ are given respectively by
$$\varphi_s(t,v,x,u)=\left(-\frac{s^2}{2}-us+t,-\frac{s^3}{6}-
\frac{us^2}{2}+ts+v,x,u+s\right),$$
$$\psi_r(t,v,x,u)=\left(e^{2r}t,e^{3r}v,e^{2r}x,e^{r}u\right).$$
It is clear that the flows are defined for each $s,r\in\Real$ and preserve the space $U$. Consequently the symmetry algebra $\g$ may be integrated to the action of the corresponding connected Lie subgroup $G$ on $U$. Consider the connected Lie subgroup $G_0$ of $G$ generated by the translations in the directions of $v,x^1,\dots,x^{n-1}$ and by the flows $\varphi_s$, $\psi_r$. Let us fix $t$ and $u$ satisfying $2t+u^2>0$ and  set 
\[s=  u \sqrt{\frac{2}{2t+u^2}}, \qquad r= -\ln \sqrt{\frac{2}{2t+u^2} }. \]
Then $$\psi_r\varphi_s(1,0,0,0)=(t,v_0,0,u)$$
for certain $v_0\in\Real$. Obviously this implies that $G_0$ acts transitively on $U$. Moreover, the action is simply transitive. 
By the construction, $G_0$ is simply connected, and its Lie algebra is $\g_0$ with the Lie brackets multiplied by $-1$. This shows that the Lie group $G_0$ is isomorphic to the Lie group defined in the Introduction. 
The explicit isomorphism may be described in the following way.   
For $r,v,u\in\Real$, $r>0$, and $y\in\Real^{n-1}$, denote by $(r,w,y,s)$ the transformation
$$(t,v,x,u)\in U\mapsto \psi_{\ln r}\varphi_s\left(t,v+w,x+y,u \right)\in U.$$

Fixing the point $(1,0,0,0)$, we may identify the Lie group $G_0$ with the space $U$:
$$(r,w,y,s)\in G_0\mapsto \psi_{\ln r}\varphi_s\left(1,w,y,0 \right)
=\left(r^2\left(-\frac{s^2}{2}+1\right),r^3\left(-\frac{s^3}{6}+s+w\right),r^2y,rs\right)\in U.$$
The pull-back under this identification of the metric $b$ and the 1-form $\omega_b$ defines on $G_0$ the left-invariant metric and left-invariant 1-form that we denote by the same symbols. 
Consequently, and the Weyl connection $\nabla$ 
becomes a left-invariant connection on $G_0$.

{\bf Proof of Theorem \ref{ThHomog}.}
Let $(M,c,\nabla)$ be a non-closed recurrent Lorentzian Weyl manifold. Suppose that the manifold $M$ is simply connected and $\operatorname{Aut}(M,c,\nabla)$-homogeneous. Denote by $H$ the Lie group $\operatorname{Aut}^0(M,c,\nabla)$ and let $\h$ denote the corresponding Lie algebra of vector fields on $M$. It is clear that $M$ is $H$-homogeneous.

Consider an open subspace $U\subset M$ on which the Weyl structure may be described as in Proposition \ref{prop:homogeneous}. Due to rigidity of conformal Killing vector fields, the restrictions of the vector fields from $\h$ to $U$ is an isomorphism of $\h$ onto a Lie algebra of vector fields on $U$. These vector fields are local symmetries of the induced Weyl structure on $U$, hence we get an injection $\h\hookrightarrow\g$. 
The Lie group $G$ acts on $G_0$ by transformations of the Weyl structure on $G_0$. Denote by $G_e$ the stabilizer of the identity element $e\in G_0$ and by $\g_e\subset\g$ the corresponding subalgebra.  Since $G$ is connected and $G_0$ is simply connected, $G_e$ is connected.
Fix a point $x\in M$. Let $H_x\subset H$ be the stabilizer of $x$. Again, $H_x$ is connected. The injection $\h\hookrightarrow\g$ defines the Lie group homomorphism
$$\varphi:H\to G.$$
It is clear that the image of $\h$ in $\g$ evaluated at a point of $G_0$ coincides with the tangent space at that point, and this imples that the action of $\varphi(H)$ is transitive on $G_0$. 
It is clear that under the injection $\h\hookrightarrow\g$, the Lie algebra $\h_x$ is mapped to a subalgebra of $\g_e$, and, consequently, 
$$\varphi(H_x)\subset G_e.$$
Define the map $$f:M\to G_0$$ in the following way: take an $y\in M$, then there exists an $h\in H$ such that $y=hx$, and we set
$$f(y)=\varphi(h)e.$$
The map $\varphi$ is well-defined. Indeed, if $hx=h_1x$, then $h^{-1}_1h\in H_x$, and $\varphi(h^{-1}_1h)\in G_e$, i.e., $\varphi(h_1)e=\varphi(h)e$.
Consider the maps $f_1:H\to M$ and $f_2:H\to G_0$,
$$f_1(h)=hx,\qquad f_2(h)=\varphi(h)e.$$
Then,
$$f_2=f \circ f_1.$$
Since $f_1$ and $f_2$ are surjective submersions, $f$ is smooth and it is an  surjective submersion as well. Since both $M$ and $G_0$ are simply connected, $f$ is a diffeomorphism. The diffeomorphism $f$ and the Weyl structure on $M$ define a $\varphi(H)$-invariant Weyl structure $(\bar c,\bar\nabla)$ on $G_0$. 
Moreover, by Proposition \ref{prop:homogeneous}, the local Weyl structure on each open subset $U\subset M$ as above is $\g$-invariant. Consequently, the induced Weyl structure $(\bar c,\bar\nabla)$ on $G_0$ is $G$-invariant. 

\begin{lem} If $(\bar c,\bar\nabla)$ is a $G$-invariant recurrent Weyl structure on $G_0$, then it coincides with $(c,\nabla)$.\end{lem}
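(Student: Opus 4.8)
The plan is to turn the statement into a finite-dimensional algebraic problem at the identity $e\in G_0$, exploiting that $G_0$ acts simply transitively on itself, and then to use the rigidity of recurrent Weyl structures established in Theorem~\ref{Thhol}. First I would record that $\g=\g_0\oplus\g_e$ as vector spaces, where $\g_e\subset\g$ is the isotropy subalgebra at $e=(t,v,x,u)=(1,0,0,0)$, spanned by the vector fields $V_i=x^i\partial_v-u\partial_{x^i}$ and the rotations $x^i\partial_{x^j}-x^j\partial_{x^i}$, all of which vanish at $e$; indeed $\g_0\cap\g_e=\{0\}$ because $G_0$ acts freely, while $\dim\g_e=(n-1)+\binom{n-1}{2}$ makes the dimensions match. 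Since $G$ is connected and $\g$ is stable under $\mathrm{Ad}(G_0)$, a standard argument shows that a $G_0$-invariant tensor field on $G_0$ is $G$-invariant if and only if its value at $e$ is invariant under the isotropy representation of $\g_e$ on $T_eG_0$. Hence a $G$-invariant Weyl structure on $G_0$ is exactly the following data at $e$: a $\g_e$-invariant conformal class $[q]$ of Lorentzian inner products on $T_eG_0$, together with the defining $1$-form $\omega$ relative to a $\g_e$-invariant representative metric (invariance forces the factor relating two such representatives to be constant, so $\omega$ is well defined), and a $\g_e$-equivariant torsion-free Nomizu map compatible with $[q]$ and $\omega$.

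Second, I would compute the isotropy representation on $T_eG_0=\langle\partial_t,\partial_v,\partial_{x^1},\dots,\partial_{x^{n-1}},\partial_u\rangle_e$ directly from \eqref{Liebrg0} together with $[V_i,\partial_{x^j}]=-\delta_{ij}\partial_v$, $[V_i,\partial_u]=\partial_{x^i}$ and $[V_i,\partial_t]=[V_i,\partial_v]=0$: the rotations act as the standard $\so(n-1)$ on $\langle\partial_{x^i}\rangle$ and trivially on $\partial_t,\partial_v,\partial_u$, while each $V_i$ acts nilpotently by $\partial_u\mapsto\pm\partial_{x^i}\mapsto\mp\partial_v\mapsto0$ and annihilates $\partial_t$. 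A short computation of the kind already carried out in Proposition~\ref{psl2} then shows that a symmetric bilinear form is $\g_e$-invariant precisely when $\langle\partial_{x^i}\rangle$ is orthogonal to $\langle\partial_t,\partial_v,\partial_u\rangle$, $q(\partial_{x^i},\partial_{x^j})=\lambda\delta_{ij}$, $q(\partial_v,\partial_u)=\lambda$, $q(\partial_v,\partial_v)=q(\partial_v,\partial_t)=0$, and $q(\partial_t,\partial_t),q(\partial_t,\partial_u),q(\partial_u,\partial_u)$ are arbitrary; this is Lorentzian of the required signature exactly when $\lambda>0$ and $q(\partial_t,\partial_t)>0$, so the invariant conformal classes form a $3$-parameter family. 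Likewise the $\g_e$-invariant $1$-forms are $\omega=\omega_t\,(dt)_e+\omega_u\,(du)_e$, a $2$-dimensional space. Thus the $G$-invariant Weyl structures on $G_0$ form a family of at most $5$ parameters, one member of which is $(c,\nabla)=([b],\nabla)$.

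Third, I would impose recurrence. For any member of this family the curvature $R$ and $\nabla R$ are $G$-invariant, hence algebraically determined by $(q,\omega)$ via the Nomizu map $\nabla^q+K^q_\omega$. By Theorem~\ref{Thhol} recurrence forces the holonomy algebra of $\nabla$ to be $\Real(\id+p\wedge q)\ltimes p\wedge\Real^{n-1}$ and, after adapting a Witt basis to $q$, forces $R_e$ to be the element $R_3$ of \eqref{R13eq1} depending on a single nonzero scalar; moreover the $\nabla$-parallel isotropic vector field $p$ and the $\nabla$-parallel distributions of Proposition~\ref{prop:distr} are necessarily $G$-invariant, which from the invariant data above already pins $p$ down to be proportional to $\partial_v$. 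Imposing that the $R_e$ built from $(q,\omega)$ lies in this one-parameter set and that $\nabla R=\theta\otimes R$ then becomes a polynomial system in the $\le 5$ parameters whose only solution is, up to the overall rescaling of $q$ that leaves the Weyl structure unchanged, the data of $(b,\omega_b)$; hence $\bar c=c$ and $\bar\nabla=\nabla$. (Alternatively, one can observe that a $G$-invariant recurrent Weyl structure is in particular locally homogeneous and hence, by Proposition~\ref{prop:homogeneous}, locally isomorphic to $(c,\nabla)$ as a Weyl structure; after composing such an isomorphism with a left translation so that it fixes $e$, it must normalize the common isotropy algebra inside $\mathrm{CO}(q)$, and the uniqueness just discussed of the recurrent member among the invariant data forces the two structures to agree on all of $G_0$.)

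I expect the only genuine work to be the last paragraph: verifying that recurrence collapses the $\le 5$-parameter family of $G$-invariant Weyl structures to the single point $(c,\nabla)$ — equivalently, that the algebraic constraints of Theorem~\ref{Thhol} together with $\nabla R=\theta\otimes R$ determine $(q,\omega)$ up to the inessential scaling. Everything before that is routine bookkeeping with the isotropy representation.
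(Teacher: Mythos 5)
Your reduction of the problem to finite-dimensional data at the identity is legitimate and is a genuinely different strategy from the paper's: since $G_0$ acts simply transitively and $G_e$ is connected, $G$-invariant Weyl structures on $G_0$ do correspond to $\g_e$-invariant pairs $(q,\omega)$ at $e$ (your computation of the isotropy representation and of the invariant bilinear forms and $1$-forms is correct, giving the $\le 5$-parameter family, and the identification of $\Real\partial_v$ as the only invariant null line is also right). However, the proof has a genuine gap exactly where the content of the lemma sits: the claim that imposing $\nabla R=\theta\otimes R$ (together with non-closedness and the normal form \eqref{R13eq1} of $R_e$) ``becomes a polynomial system whose only solution is the data of $(b,\omega_b)$'' is asserted, not established. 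In your parametrization this assertion \emph{is} the lemma, merely restated; to close it you would have to write down the Koszul/Nomizu data of the left-invariant connection determined by a general invariant $(q,\omega)$, compute $R$ and $\nabla R$ on $\g_0$, and solve the resulting system in the five parameters -- none of which is carried out, and you acknowledge as much in your final paragraph. The parenthetical alternative does not repair this: after invoking Proposition \ref{prop:homogeneous} to get a local isomorphism with $(c,\nabla)$, you again appeal to ``the uniqueness just discussed,'' i.e.\ to the unproven collapse of the invariant family, so that route is circular as written.

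For comparison, the paper avoids the algebraic computation altogether: it uses Proposition \ref{prop:homogeneous} to put the $G$-invariant recurrent structure $(\bar c,\bar\nabla)$ into the canonical coordinates $\bar t,\bar v,\bar x^i,\bar u$ with metric $\bar b$ and form $\omega_{\bar b}$ of the same shape as $b,\omega_b$, observes that its symmetry algebra $\bar\g$ must coincide with $\g$, and then matches the two coordinate systems by comparing the canonical ideals of this common algebra (the radical $\fraka$, $\frakb=[\fraka,\fraka]$, $[\frakb,\frakb]$, $[[\frakb,\frakb],\frakb]=\Real\partial_v$); this forces $\partial_{\bar v}\propto\partial_v$, $\partial_{\bar x^i}\in\operatorname{span}\{\partial_v,\partial_{x^j}\}$, etc., and hence $\bar b=c_1 b$, $\omega_{\bar b}=\omega_b$. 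If you want to keep your approach, the missing step is precisely the verification that the recurrence equations on the invariant family have the unique solution $(b_e,\omega_b|_e)$ up to scale; either perform that computation explicitly, or replace it by an ideal-matching argument of the paper's type.
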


{\bf Proof.} Consider on $G_0$ the coordinates $t,v,x^1,\dots,x^{n-1},u$ that we considered above on $U$. It is easy to see that the vector fields $$\partial_v,\partial_{x^i}, V_j,X,Y,\quad 1\leq i,j\leq n-1$$
span an ideal, which is the radical of $\g$. We will denote this ideal by $\fraka$. Next,
$$\frakb=[\fraka,\fraka]=\textrm{span}
\{\partial_v,\partial_{x^1},\dots,\partial_{x^{n-1}},V_1,\dots,V_{n-1},X\},$$
$$[\frakb,\frakb]=\textrm{span}\{\partial_v,\partial_{x^1},\dots,\partial_{x^{n-1}}\},$$
$$[[\frakb,\frakb],\frakb]=\Real\partial_v.$$
Consider the structure $(\bar c,\bar\nabla)$. In a neighborhood of each point form $U$ there exist an open subset $V\subset U$ with coordinates $\bar t,\bar v,\bar x^1,\dots,\bar x^{n-1},\bar u$ and a local metric $\bar b$ from $\bar c$ with the 1-form $\omega_{\bar b}$   having the same structure as $b$ and $\omega_b$ above. 
The symmetry algebra $\bar \g$ coincides with $\g$. Consequently, the canonical ideals of $\g$ coincide with similar ideals of $\bar \g$. We conclude that $\partial_{\bar v}$ is proportional to $\partial_v$, each $\partial_{\bar x^i}$ is a linear combination of $\partial_v,\partial_{x^1},\dots,\partial_{x^{n-1}}$, etc. This easily implies that $\bar b$ and $\omega_{\bar b}$ written with respect to the coordinates   
$t,v,x^1,\dots,x^{n-1},u$ coincide respectively with $c_1b$ and $\omega_b$ for some constant $c_1\in\Real$.
This proves the lemma. \qed

 The lemma implies the proof of the theorem. \qed

\section{Einstein-Weyl structures with recurrent curvature tensor} \label{sect:EinsteinWeyl}
An Einstein-Weyl manifold is a Weyl manifold $(M,c,\nabla)$ satisfying
\[ \mathrm{Ric}^{\mathrm{sym}} = \Lambda g\]
for some $g \in c$, where $\mathrm{Ric}^{\mathrm{sym}}$ is the symmetrized part of the Ricci tensor of $\nabla$ and $\Lambda$ is a smooth function. These structures have received a lot of attention, in particular in three dimensions (see for example \cite{BFKN,DMT}). In this section we apply our previous results to give a complete local description of the connected non-closed Einstein-Weyl structures with recurrent curvature tensor. 

It turns out that there are no non-closed Einstein-Weyl manifolds with recurrent curvature tensor when $\dim M \geq 4$. This can be seen either by computing $\mathrm{Ric}^{\mathrm{sym}}$ for $(M,h,\omega_h)$ from Proposition \ref{psl2} and verifying that the equation $\mathrm{Ric}^{\mathrm{sym}} = \Lambda h$ is never satisfied, or by looking at the weighted parallel spinors and applying results from \cite{ParSpin}. 
To find the three-dimensional Einstein-Weyl manifolds with recurrent curvature tensor, we use the local coordinate descriptions of Theorem \ref{dim3Th}. 

The Weyl structure defined by 
\[ g=2dv du + e^{-2F} (dx)^2, \qquad \omega = \dot F du\] 
with $F=F(x,u)$ has the following symmetrized Ricci tensor: 
\[\mathrm{Ric}^{\mathrm{sym}} = -\partial_x \dot F du dx.\]
Clearly, the Weyl structure is Einstein-Weyl if and only if $\Lambda\equiv 0$ and  $\partial_x \dot F \equiv 0$. Thus, there are no nonclosed Einstein-Weyl structures of this form.  On the other hand, the Weyl structure given by 
\[ g=2 dv du + (dx)^2+H(du)^2, \qquad \omega = H_v du\]
with 
\begin{equation}
 H(v,x,u)=a(u) v x + \frac{1}{12} a^2(u) x^4 - \frac{1}{3} \dot a(u) x^3 + c(u) x \label{eq:H}
\end{equation}
is Einstein-Weyl for any choices of $a$ and $c$. Indeed, the symmetrized Ricci tensor is given by 
\[\mathrm{Ric}^{\mathrm{sym}} = -H_{vv} \left(2 dv du+(dx)^2\right)+ \frac{1}{2}\left(H_v^2-2H_{vu} -H_{xx}-H H_{vv} \right) du^2,\]
and it follows that the structure is Einstein-Weyl if and only if 
\[2H_{vu} + H_{xx}-(H H_v)_v=0.\]
In this case $\Lambda = -H_{vv}$. This is essentially a known result: The above differential equation is equivalent to the dKP equation in \cite{DMT} under the point transformation $(v,x,u,H)\mapsto (v,x,-2u,-H)$. It is easy to verify that the function \eqref{eq:H} is a solution of the above PDE for any pair $a,c$. Moreover, if $H$ is of the form \eqref{eq:H}, then $\Lambda\equiv 0$ and thus the Ricci tensor is skew-symmetric. We arrive at the following proposition.

\begin{prop}\label{prop:EW}
  Let $(M,g,\omega)$ be a  non-closed Lorentzian Einstein-Weyl manifold with recurrent curvature tensor. Then $\dim M = 3$ and around each point of $M$ there exist local coordinates in which $g$ and $\omega$ take the form
  \[ g=2 dv du + (dx)^2+H(du)^2, \qquad \omega = H_v du\]
  where $H$ is given by \eqref{eq:H} with non-vanishing $a(u)$. 
\end{prop}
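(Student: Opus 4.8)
The plan is to run through the local normal forms already obtained and, in each, compute the symmetrized Ricci tensor $\mathrm{Ric}^{\mathrm{sym}}$ of the Weyl connection and then impose the Einstein--Weyl condition $\mathrm{Ric}^{\mathrm{sym}} = \Lambda g$.

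First I would rule out $\dim M = n+2 \geq 4$. By Proposition \ref{psl2} every non-closed recurrent Lorentzian Weyl structure in dimension $\geq 4$ is, in suitable local coordinates, the pair $(h,\omega_h)$ of \eqref{eq:Weyl} determined by a function $\psi$ with $\psi'(t)>0$. Computing $\mathrm{Ric}^{\mathrm{sym}}$ for this explicit pair (a mechanical calculation, most safely carried out with a computer algebra system) and checking that it is never pointwise proportional to $h$ --- for any admissible $\psi$ --- shows that none of these structures is Einstein--Weyl. Alternatively, one may invoke the classification of weighted parallel spinors from \cite{ParSpin} together with Corollary \ref{cor2}. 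Either way, $\dim M = 3$.

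Next I would handle $\dim M = 3$ via the two local normal forms of Theorem \ref{dim3Th}. For the structure $g = 2dv\,du + e^{-2F}(dx)^2$, $\omega = \dot F\,du$, a direct computation gives $\mathrm{Ric}^{\mathrm{sym}} = -\partial_x\dot F\, du\,dx$; a tensor of this shape is a multiple of $g$ only if $\partial_x\dot F \equiv 0$, which is excluded since the structure is non-closed (recall that non-closedness is equivalent to $\partial_x\dot F$ being non-vanishing). Hence no Einstein--Weyl structure occurs in this case. For the structure $g = 2dv\,du + (dx)^2 + H(du)^2$, $\omega = H_v\,du$, the computation yields
\[ \mathrm{Ric}^{\mathrm{sym}} = -H_{vv}\bigl(2dv\,du + (dx)^2\bigr) + \tfrac12\bigl(H_v^2 - 2H_{vu} - H_{xx} - HH_{vv}\bigr)(du)^2, \]
so $\mathrm{Ric}^{\mathrm{sym}} = \Lambda g$ holds exactly when $\Lambda = -H_{vv}$ and $2H_{vu} + H_{xx} - (HH_v)_v = 0$. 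It then remains to record two elementary facts: the function $H$ in \eqref{eq:H} --- which by Theorem \ref{dim3Th} is precisely the $H$ occurring in the $2$-dimensional holonomy case --- satisfies this PDE for every choice of $a,c$ with $a$ non-vanishing; and for such $H$ one has $H_{vv} \equiv 0$, so $\Lambda \equiv 0$ and $\mathrm{Ric}$ is in fact skew-symmetric. Combining the two three-dimensional cases with the dimension reduction gives the asserted local description.

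I expect the main obstacle to be the $\dim M \geq 4$ step: one must be sure that $\mathrm{Ric}^{\mathrm{sym}}$ of \eqref{eq:Weyl} fails to be proportional to $h$ for \emph{every} admissible $\psi$, not merely generically, so that the dimension reduction is genuinely complete. The three-dimensional part is then just a sequence of short verifications once the Ricci tensors are in hand.
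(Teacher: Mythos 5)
Your proposal is correct and follows essentially the same route as the paper: rule out $\dim M\geq 4$ by checking $\mathrm{Ric}^{\mathrm{sym}}=\Lambda h$ fails for the normal form of Proposition \ref{psl2} (or via the weighted parallel spinor results of \cite{ParSpin}), then compute $\mathrm{Ric}^{\mathrm{sym}}$ for the two three-dimensional normal forms of Theorem \ref{dim3Th}, obtaining the same expressions and the same PDE $2H_{vu}+H_{xx}-(HH_v)_v=0$, which \eqref{eq:H} satisfies with $\Lambda=-H_{vv}\equiv 0$. No gaps beyond the computational verifications you already flag.
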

Since the  non-closed Lorentzian Einstein-Weyl manifolds with recurrent curvature tensor are exactly those of the form \eqref{eq:3D2}, the symmetry analysis from the previous section can be directly applied to the recurrent Einstein-Weyl manifolds. In particular, there is a unique one with maximal (2-dimensional) Lie algebra of symmetries: 
\begin{equation*}
g = 2dvdu + (dx)^2 +x\left(v+ \frac{1}{12} x^3\right) (du)^2,\qquad \omega_g=xdu.
\end{equation*}

\vskip0.5cm

{\bf Acknowledgements.} The authors thank Boris Kruglikov for helpful discussions. A.D. was supported by  grant MUNI/A/1092/2021 of Masaryk University.   A.G. was partially supported by the project GF24-10031K of Czech Science Foundation (GA\v{C}R). E.S. was supported by the Norwegian Financial Mechanism 2014-2021 (project registration number 2019/34/H/ST1/00636) and the Tromsø Research Foundation (project ``Pure Mathematics in Norway'').

\end{document}